\documentclass{amsproc}

\usepackage{amsmath,amssymb,amsthm,mathtools}
\usepackage{amsrefs}
\usepackage[hidelinks]{hyperref}
\usepackage{tikz}

\newtheorem{theorem}{Theorem}[section]
\newtheorem{corollary}[theorem]{Corollary}
\newtheorem{proposition}[theorem]{Proposition}
\newtheorem{lemma}[theorem]{Lemma}

\newtheorem{definition}[theorem]{Definition}

\newtheorem*{conjecture}{Conjecture}
\newtheorem{question}[theorem]{Question}
\newtheorem*{theoremA}{Theorem A}
\newtheorem*{theoremB}{Theorem B}

\newcommand{\rk}{\operatorname{rk}}
\newcommand{\vol}{\operatorname{vol}}

\newcommand{\alg}{\operatorname{alg}}
\newcommand{\Prob}{\mathbb P}
\newcommand{\E}{\mathbb E}

\makeatletter
\@namedef{subjclassname@2020}{2020 Mathematics Subject Classification}
\makeatother

\author{Andreas Thom}
\address{Andreas Thom, TU Dresden, 01062 Dresden, Germany}
\email{andreas.thom@tu-dresden.de}

\begin{document}

\title{On prime endomorphisms of the free group of rank two}

\subjclass[2020]{Primary 20E05; Secondary 20P05, 60B15}
\keywords{free groups, endomorphisms, prime endomorphisms, algebraic extensions, Stallings graphs, random reduced words, random walks}

\begin{abstract}
We study prime endomorphisms of the free group $F_2$. The main results provide lower and upper bounds for the logarithmic density $\delta_{\mathrm{np}}$ of non-prime endomorphisms:
$$
\frac34 \leq       \delta_{\mathrm{np}}\leq \frac{27}{28}.
$$
Equivalently, we prove corresponding lower and upper bounds for the exponential decay of the probability that a random endomorphism is non-prime, aligning with a heuristic argument of Ian Agol.
\end{abstract}

\maketitle

\tableofcontents

\section*{Introduction}

Every endomorphism of the additive group $\mathbb Z$ has the form
$x\mapsto mx$ for some $m\in\mathbb Z$. The prime number theorem says that the number of primes less than or equal to $n$ is approximately equal to $\frac{n}{\log n}$. Thus the prime number theorem may be viewed as a counting theorem for endomorphisms of $\mathbb Z$ that cannot be written as a composition of non-bijective endomorphisms.

This suggests the analogous counting problem for the free group $F_2$. First of all, we say that an injective endomorphism of $F_2$ is prime if any way of writing it as a composition of endomorphisms must involve an automorphism, see Definition~\ref{def:prime}. Hence, the question is: How often is an endomorphism of $F_2 = F(a,b)$ prime, when one orders endomorphisms by the lengths of the images of a fixed free basis $(a,b)$?  Writing $\varphi_{u,v} \colon F_2 \to F_2$ for the
endomorphism with $a\mapsto u$ and $b\mapsto v$, the natural finite model is to
choose $(u,v)$ uniformly from $B_n\times B_n$, where $B_n$ is the ball of
radius $n$ in $F_2$.  The problem was suggested by Ian Agol, who also outlined
in \cite{ThomMO} a heuristic argument for the fact that the density of non-prime endomorphisms should tend to zero. Our result is more precise than this, giving explicit lower and upper bounds of the exponential decay rate for the density of non-prime endomorphisms.

Since $|B_n|=2\cdot 3^n-1$, the total number of pairs in $B_n\times B_n$ has
order about $\Theta(3^{2n})$.  Therefore, it is natural to measure the size of exceptional sets
by its logarithmic density.  Let
$
        \mathcal N_n=
        \{(u,v)\in B_n\times B_n:\varphi_{u,v}\text{ is not prime}\}
$
and define
\[
        \delta_{\mathrm{np}}=
        \limsup_{n\to\infty}\frac{\log |\mathcal N_n|}{\log |B_n\times B_n|}.
\]
Because
$
        \log |B_n\times B_n|=2n\log 3+O(1),
$
this quantity can equally be recovered from the exponential decay of probability that a random endomorphism is not prime.
In particular, an upper bound of this probability of the form $n^B3^{-\alpha n}$ implies
$
        \delta_{\mathrm{np}}\leq 1-\frac{\alpha}{2}.
$

\medskip

Our two main results are about lower and upper bounds for $\delta_{\mathrm{np}}$.

\begin{theoremA}
Let $B_n$ be the ball of radius $n$ in $F_2$. There are absolute constants
$A,B>0$ such that
\[
        \Prob_{(u,v)\in B_n\times B_n}
        \bigl(\varphi_{u,v}\text{ is not prime}\bigr)
        \leq A n^B3^{-n/14}.
\]
Consequently, we obtain
$
        \delta_{\mathrm{np}}\leq \frac{27}{28}.
$
\end{theoremA}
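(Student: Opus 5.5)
We first dispose of the non-injective case. $\varphi_{u,v}$ fails to be injective only if $u$ and $v$ commute, that is, $u,v$ are powers of a common element. Such pairs number at most $n^{O(1)}3^{n}$, so this case is negligible.

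For injective $\varphi=\varphi_{u,v}$ we reformulate non-primality in terms of subgroups. Write $\varphi=\psi\circ\theta$ with neither factor an automorphism and $\psi$ injective. Put $K=\psi(F_2)$ and $H=\langle u,v\rangle=\psi(\theta(F_2))$. Then $K$ is a proper free factor-free subgroup of rank $2$, and $H\subsetneq K$ because $\theta$ is not surjective. So $\varphi$ is non-prime iff there is a rank-$2$ subgroup $K$ with $H\subsetneq K\subsetneq F_2$.

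In Stallings-graph terms, the core graph $\Gamma_H$ immerses into $\Gamma_K$ by a map that is not an isomorphism. Here $\Gamma_K$ is a labelled core graph with $\chi=-1$ which is not the rose $R_2$, since $K\neq F_2$. Both cores have Euler characteristic $-1$, so $\Gamma_K$ is a theta, barbell or figure-eight graph with at most three topological arcs. A non-isomorphic immersion between cores of equal rank cannot be injective on edges, so it genuinely folds $\Gamma_H$.

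For a generic pair, standard estimates for random reduced words show three things, each outside a set of probability $n^{O(1)}3^{-cn}$ with $c$ much larger than $1/14$. First, $|u|,|v|\ge n-O(\log n)$. Second, the common prefix and suffix cancellations defining $\Gamma_H$ have length $O(\log n)$. Third, $u$ and $v$ contain no repeated subword (or inverse-repeated subword) of length $\ge \varepsilon n$. So we may assume $\Gamma_H$ is a theta or figure-eight graph with arcs of length $\approx n$ and no long self-coincidences.

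The main step is a counting argument over the target graph $\Gamma_K$, indexed by its total edge length $m$. There are at most $n^{O(1)}3^{m}$ labelled graphs $\Gamma_K$ of total length $m$. We then count the pairs $(u,v)$ whose reduced words are readable as closed paths in a fixed $\Gamma_K$ (after the $O(\log n)$ conjugating tails). Away from the at most two branch vertices, a reading path in $\Gamma_K$ is forced, and it acquires at most $2$ choices at each passage through a branch vertex. Since $\Gamma_K$ is not the rose, every passage through an arc costs length at least its length. The split is as follows.

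\emph{Case of long arcs.} If every arc of $\Gamma_K$ has length $\ge \lambda n$, a path of length $n$ has only $O(1/\lambda)$ branchings, giving $n^{O(1)}$ choices per word. The immersion is surjective and folds $\Gamma_H$, so $m\le 2n-\mu n$ for some $\mu>0$. Alternatively, two distinct segments of $\Gamma_H$ of length comparable to an arc of $\Gamma_K$ get identified, which is a repeated subword of length $\ge\lambda n$ and is already excluded.

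\emph{Case of a short arc.} If some arc has length $\ell<\lambda n$, we instead bound the number of readable words. A reduced word readable in $\Gamma_K$ is the image of a walk, and the walk count is at most $3^{m'}\cdot 2^{2n/\ell}$-type. Here the real loss is that readability in a non-rose graph forces the word to be a path in a proper subgraph of the Cayley tree of ``bounded growth'', whose growth rate is strictly less than $3$. The plan is to bound this growth rate explicitly, for instance by the spectral radius of the non-backtracking matrix of $\Gamma_K$ weighted by lengths. This gives at most $n^{O(1)}3^{(1-\eta(\lambda))\cdot 2n}$ pairs.

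Summing over $m\le 2n$ and the $n^{O(1)}$ combinatorial types, and optimising the threshold $\lambda$ against $\mu$ and $\eta$, should produce the exponent $3^{-n/14}$. The bound $\delta_{\mathrm{np}}\le 1-\tfrac{1}{28}=\tfrac{27}{28}$ then follows from the remark after the definition of $\delta_{\mathrm{np}}$.

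I expect the main obstacle to be the intermediate regime, where $\Gamma_K$ has total length close to $2n$ but one arc of moderate length. There neither the folding deficit nor the growth deficit is large on its own. I would first try to show that the immersion $\Gamma_H\to\Gamma_K$ either is injective on all but $O(\ell)$ edges, which forces a near-isomorphism and then $H=K$, or overlaps a segment of length $\ge \ell$ twice. Balancing these two alternatives is presumably where the specific constant $1/14$ arises.
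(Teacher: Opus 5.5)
You have a plan rather than a proof, and the step that carries the argument is missing.

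\textbf{The long-arc and short-arc cases are not closed.} In your long-arc case, non-injectivity of $\Gamma_H\to\Gamma_K$ only says that some edge of $\Gamma_K$ has two preimages, possibly along a stretch of bounded length. So it gives neither $m\le(2-\mu)n$ nor a repeated subword of length $\lambda n$. Your alternative (a long arc covered twice) leaves open exactly the case in which every long arc is covered once. In your short-arc case, a per-$K$ bound of the form $3^{(1-\eta)2n}$ does not survive summation over up to $3^{m}$ graphs with $m$ near $2n$.

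The missing idea is the deterministic bound of Proposition~\ref{P.volume}. For a clean pair, every topological edge of a proper rank-two overgroup $K\supsetneq H$ has length $<L$, so $\vol(K)<3L$ and only $O(L^{4}3^{3L})$ candidate cores remain. Its proof handles precisely your open case:
\begin{itemize}
\item the loops of $u,v$ fill $\Gamma(K)$;
\item an edge $e$ of length $\ge L$ is traversed by the cyclic reductions (otherwise it is a barbell bar, and short tails force $u,v$ to be the two lobe generators);
\item by cleanness it is traversed exactly once, hence $e$ is nonseparating;
\item the other generator lies in the rank-one complement and, not being a proper power (Lemma~\ref{lem:rank-one-periodicity}), generates it;
\item rebasing at $e$ gives $u=sv^{k}$ with $s$ the stable loop, so $H=K$.
\end{itemize}
So there is no intermediate regime to balance.

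The conjugating tails genuinely have to enter this argument. As literally stated, Proposition~\ref{P.volume} constrains only cyclic reductions. Take $u=c_2c_1c_2^{-1}$ and $v=c_1c_2c_1^{-1}$, with $c_1,c_2$ clean and $\langle c_1,c_2\rangle$ having a figure-eight core. This pair is $L$-clean, yet it generates a proper subgroup of $\langle c_1,c_2\rangle$. What saves the sphere case is that the tails there have length at most $(n-6L)/2<L$, and that fact must be used.

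\textbf{The genericity step fails at the exponential scale.} In $B_n$, the event $|u|\le n-k$ and the event of a cyclic or common-prefix cancellation of length $k$ each have probability about $3^{-k}$. So demanding $|u|\ge n-O(\log n)$ and tails of length $O(\log n)$ discards only a polynomially small set, not one of size $3^{-cn}$. You must allow linear cancellation. The exponent $1/14$ comes from balancing exactly that cost, $3^{-(n-6L)/2}$, against the repeated-block cost $n^{2}3^{-L/2}$ at $L=\lfloor n/7\rfloor$, not from your arc regimes.

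Linear tails also mean candidate overgroups may have stems of linear length. These must be paid for by a uniform return estimate, which you only announce. A growth rate merely below $3$ is not enough: with $\vol(K)<3n/7$, the exponent $1/14$ requires it below $3^{3/4}$. The paper does this in three steps:
\begin{itemize}
\item every proper rank-two core has non-backtracking spectral radius at most $\rho_0\approx 2.13<9/4$ (the figure-eight with loop lengths $1,2$ is extremal; theta and barbell cores have row sums at most $2$);
\item hence $|K\cap S_n|/|S_n|\le C_0(3/4)^n(9/4)^{-2h(K)}$ uniformly in $K$;
\item combining this with the count of cores and the $3^{h}$ stem labels bounds the overgroup term by a constant times $n^{4}3^{3L}(3/4)^{2n}\le n^{4}3^{-n/14}$.
\end{itemize}
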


The proof of the upper bound has three ingredients. First, primeness is
translated into the absence of a proper intermediate rank-two subgroup. Second, we restrict our attention to pairs $(u,v)$ that satisfy a small cancellation condition and show that the generated subgroup cannot lie properly inside a rank-two overgroup whose Stallings core has large volume. Third, proper rank-two subgroups have a
sharp uniform cogrowth gap: their non-backtracking spectral radius is at most
$\rho_0<9/4$, where $\rho_0$ is the largest real root of
$\rho^3-\rho^2-\rho-3=0$.  Combining this with an explicit count of small volume
rank-two cores gives the bound $n^B3^{-n/14}$.

\begin{theoremB}

$
        \delta_{\mathrm{np}}\geq \frac34.
$
\end{theoremB}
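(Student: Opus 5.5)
Theorem B follows once we exhibit a family of non-prime pairs with $|\mathcal N_n|\geq 3^{3n/2-o(n)}$, since $\log|B_n\times B_n| = 2n\log 3+O(1)$. The natural source of non-prime endomorphisms is composition: if $\psi$ is a non-surjective injective endomorphism and $\varphi=\psi\circ\theta$ with $\theta$ injective and non-surjective, then $\varphi$ is not prime, by Definition~\ref{def:prime}. The cheapest such factorization uses a fixed small non-surjective $\theta$ and lets $\psi$ be arbitrary. My plan is to take $\theta=\varphi_{a^2,b}$, so that $\varphi_{u,v}\circ\theta=\varphi_{u^2,v}$. The image subgroup $\langle u^2,v\rangle$ then lies in $\langle u,v\rangle$, and this containment is proper whenever $u,v$ generate a free group of rank two, because $a^2,b$ is not a basis. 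The constraint is that $u^2$ must have length at most $n$. So I count pairs $(u',v)\in B_{n/2}\times B_n$ with $\varphi_{u',v}$ injective. This gives roughly $3^{n/2}\cdot 3^{n}=3^{3n/2}$ endomorphisms $\varphi_{u'^2,v}$, all of them non-prime.

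The key steps are as follows.

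\textbf{Step 1.} Verify that $\varphi_{u'^2,v}=\varphi_{u',v}\circ\varphi_{a^2,b}$ is not prime. Both factors must be non-automorphisms. The factor $\varphi_{a^2,b}$ is clearly not surjective, since its abelianization has determinant $2$. For $\varphi_{u',v}$ we need injectivity, i.e.\ $u',v$ do not commute; to be safe we also restrict to pairs where it is not an automorphism, which excludes only a negligible set. I would need to check the precise form of Definition~\ref{def:prime}, in particular whether the definition requires injective factors.

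\textbf{Step 2.} Show that the map $(u',v)\mapsto (u'^2,v)$ is injective, so that we do not overcount. This holds because square roots are unique in free groups.

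\textbf{Step 3.} Count the valid pairs. Length is subadditive, so $|u'^2|\le 2|u'|\le n$ when $u'\in B_{\lfloor n/2\rfloor}$. Hence all $u'\in B_{\lfloor n/2\rfloor}$ are admissible, not only the cyclically reduced ones. Commuting pairs, and pairs that form a basis, number only $3^{n(1+o(1))}$ at most, which is negligible. This yields
$$|\mathcal N_n|\ge c\,3^{n/2}\cdot 3^{n},$$
and therefore $\delta_{\mathrm{np}}\ge \tfrac{3n/2}{2n}=\tfrac34$.

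The only potential obstacle is definitional: it must be that any nontrivial decomposition through a non-surjective injective endomorphism witnesses non-primeness, which Step 1 relies on. Beyond that, the argument is elementary. A sanity check is that no better exponent comes from other fixed $\theta$: if $\theta$ has images of lengths $p,q$, the count is about $3^{n/p+n/q}$, and this is maximized when one image has length $1$ and the other length $2$. Here $\theta$ cannot be an automorphism, and the only non-automorphic choices with lengths $(1,1)$ send $a,b$ to, up to inverses, the same generator, which is not injective.
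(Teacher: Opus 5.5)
Your proposal is essentially the paper's proof of Proposition~\ref{P.variable.lower}. The paper also writes $\psi_{x,y}=\alpha\circ\varphi_{a^2,b}$ with $\alpha=\varphi_{x,y}$, counts noncommuting pairs $(x,y)\in B_{\lfloor n/2\rfloor}\times B_n$, and uses uniqueness of square roots to obtain $3^{3n/2+o(n)}$ distinct non-prime pairs.

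The one difference is in how the outer factor is kept from being an automorphism. The paper draws $x,y$ from the finite-index subgroup $\Gamma=\ker\bigl(F(a,b)\to(\mathbb Z/2\mathbb Z)^2\bigr)$, so $\alpha$ visibly has proper image, at no exponential cost. You instead assert without proof that basis pairs are negligible. That claim is true: by Nielsen, the complements of a primitive $v$ are exactly the elements $v^k u_0^{\pm1}v^l$, and only polynomially many of these lie in a ball. The exclusion is genuinely needed, not just ``to be safe'', because $\varphi_{a^2,b}$ is itself prime.
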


The lower bound comes from a high-entropy family of variable endomorphic images
obtained by factoring through $\varphi_{a^2,b}$.
Thus, as a consequence of the main results, the logarithmic density of non-prime pairs is constrained by the inequality
\[
        \frac34
        \leq \delta_{\mathrm{np}}
        \leq \frac{27}{28}=0.96429 \dots.
\]
It is a natural problem to determine the exact value of $\delta_{\mathrm{np}}$. We conjecture that the lower bound is sharp.

\begin{conjecture}
$
        \delta_{\mathrm{np}}=\frac34.
$
\end{conjecture}

The paper is organized as follows. Section~\ref{S.basic} collects basic facts
about injective endomorphisms and rank-two overgroup lattices,
Sections~\ref{S.clean}--\ref{S.random-words} prove the sphere estimate,
Section~\ref{S.log-density} turns it into logarithmic-density bounds in the ball
model, and Section~\ref{S.random-walk} treats simple random walks. We end with a section of questions and remarks.

\section{Basic observations on endomorphisms}\label{S.basic}

Let $F_2=F(a,b)$ be the free group of rank two.  For $u,v\in F_2$, write
$H(u,v)=\langle u,v\rangle \leq F_2$ and let $\varphi_{u,v}\colon F_2\to F_2$ denote
the endomorphism with $a\mapsto u$ and $b\mapsto v$.

\subsection{Prime endomorphisms and algebraic overgroups}

We start out by collecting some basic facts about endomorphisms of $F_2$.

\begin{lemma}\label{L.injective.noncommuting}
The endomorphism $\varphi_{u,v}$ is injective if and only if $u$ and $v$ do not
commute.
\end{lemma}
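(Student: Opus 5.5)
The plan is to prove the two directions separately, using two standard facts about free groups: subgroups of free groups are free (Nielsen--Schreier), and free groups are Hopfian.

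\emph{Commuting implies non-injective.} Suppose $uv=vu$. Then $\varphi_{u,v}$ sends the nontrivial commutator $[a,b]=aba^{-1}b^{-1}$ to $[u,v]=1$. Since $[a,b]\neq 1$ in $F_2$, the kernel is nontrivial and $\varphi_{u,v}$ is not injective. This covers the degenerate cases too, e.g.\ $u=1$ or $v=1$.

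\emph{Non-commuting implies injective.} Suppose $u$ and $v$ do not commute, and set $H=H(u,v)$.
\begin{enumerate}
\item By Nielsen--Schreier, $H$ is free. It is generated by two elements, so its rank is at most $2$.
\item $H$ is non-abelian, since $u$ and $v$ do not commute. A free group of rank $\le 1$ is abelian, so $H$ is free of rank exactly $2$.
\item Choose an isomorphism $\psi\colon F_2\to H$. Then $\psi^{-1}\circ\varphi_{u,v}\colon F_2\to F_2$ is surjective, because $\varphi_{u,v}$ maps $F_2$ onto $H$.
\item Since $F_2$ is Hopfian, every surjective endomorphism of $F_2$ is an automorphism. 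Hence $\psi^{-1}\circ\varphi_{u,v}$ is injective, and therefore so is $\varphi_{u,v}$.
\end{enumerate}

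The only step needing care is the Hopfian property. It follows from Mal'cev's theorem that finitely generated residually finite groups are Hopfian, together with the residual finiteness of free groups. Alternatively, Nielsen reduction shows directly that a generating pair of a rank-two free group is a basis. Everything else is routine.
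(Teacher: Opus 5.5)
Your proof is correct and follows the paper's argument: non-commuting $u,v$ generate a non-cyclic, hence rank-two, free subgroup, and your appeal to the Hopfian property of $F_2$ supplies exactly the step the paper leaves as ``one can show that the images of $a$ and $b$ freely generate the image.'' For the other direction you use $\varphi_{u,v}([a,b])=[u,v]=1$ instead of the paper's observation that the image is cyclic; this is equivalent and slightly more direct.
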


\begin{proof}
If $u$ and $v$ commute, then $H(u,v)$ is cyclic, so $\varphi_{u,v}$ cannot be
injective.  Conversely, if $u$ and $v$ do not commute, then $H(u,v)$ is a
non-cyclic subgroup of the free group $F_2$ generated by two elements.  Hence
$H(u,v)$ is free of rank two, and one can show that the images of $a$ and $b$ freely generate the
image of $\varphi_{u,v}$.  Therefore $\varphi_{u,v}$ is injective.
\end{proof}

\begin{definition} \label{def:prime}
An endomorphism $\varphi\colon F_2\to F_2$ is called \emph{prime} if it is
injective and, whenever $\varphi=\alpha\circ\beta$ with
$\alpha,\beta\colon F_2\to F_2$  endomorphisms, at least one of
$\alpha$ or $\beta$ is an automorphism.
\end{definition}

The following elementary criterion is the bridge between factorization of
endomorphisms and subgroup theory.

\begin{lemma}\label{L.primecriterion}
Let $\varphi\colon F_2\to F_2$ be an endomorphism and put $H=\varphi(F_2)$.
Then $\varphi$ is prime if and only if $\varphi$ is injective and there is no
subgroup $K$ with $H<K<F_2$ and $\rk K=2$.
\end{lemma}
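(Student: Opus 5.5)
We prove both directions by turning factorizations $\varphi=\alpha\circ\beta$ into intermediate subgroups $\beta$-images pushed forward by $\alpha$, and back. Throughout we use that an endomorphism of $F_2$ is an automorphism iff it is surjective (free groups of finite rank are Hopfian), and that a rank-two subgroup $K\le F_2$ is the image of an injective endomorphism (send $a,b$ to a free basis of $K$).

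\emph{Non-prime $\Rightarrow$ intermediate $K$ exists (or $\varphi$ non-injective).} Suppose $\varphi$ is injective and $\varphi=\alpha\circ\beta$ with neither $\alpha$ nor $\beta$ an automorphism. Since $\varphi$ is injective, so is $\beta$. Also $\alpha$ is injective on $\beta(F_2)$, and $\alpha(\beta(F_2))=H$ has rank two. I claim $\alpha$ is injective. Indeed $\alpha(F_2)\supseteq H$ is non-cyclic, so $\alpha(a),\alpha(b)$ do not commute and Lemma~\ref{L.injective.noncommuting} applies. Put $K=\alpha(F_2)$, a free group of rank two with $H\le K$. Then $K\ne F_2$, since otherwise $\alpha$ would be surjective, hence an automorphism. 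Moreover $K\ne H$: if $\alpha(F_2)=\alpha(\beta(F_2))$, injectivity of $\alpha$ forces $\beta(F_2)=F_2$, so $\beta$ is an automorphism. Thus $H<K<F_2$ with $\rk K=2$.

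\emph{Intermediate $K$ $\Rightarrow$ non-prime.} Given $H<K<F_2$ with $\rk K=2$, let $\alpha\colon F_2\to F_2$ send $a,b$ to a free basis $x,y$ of $K$. Then $\alpha$ is an isomorphism onto $K$, and it is not surjective. Define $\beta=\alpha^{-1}\circ\varphi$, which makes sense because $\varphi(F_2)=H\le K$. This $\beta$ is an endomorphism of $F_2$ with $\alpha\circ\beta=\varphi$, and its image $\alpha^{-1}(H)$ is a proper subgroup of $F_2$ because $H\ne K$. So $\beta$ is not an automorphism, and $\varphi$ is not prime. Non-injective $\varphi$ is non-prime by definition, which finishes the equivalence.

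The only delicate point is the step showing $\alpha$ is injective, which I handle with Lemma~\ref{L.injective.noncommuting} as above. The rest is bookkeeping with Hopficity.
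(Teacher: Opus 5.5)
Your proof is correct and follows the paper's argument: a free basis of $K$ gives the non-surjective injection $\alpha$ with $\beta=\alpha^{-1}\circ\varphi$, and conversely Lemma~\ref{L.injective.noncommuting} makes $\alpha$ injective, so $K=\alpha(F_2)$ is the required intermediate rank-two subgroup. You are more explicit than the paper about why $H\neq K$ and $K\neq F_2$; your appeal to Hopficity is harmless but not needed, since you already have injectivity wherever you use it.
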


\begin{proof}
Suppose first that $\varphi$ is prime.  Then $\varphi$ is injective by
definition.  If there were a subgroup $K$ with $H<K<F_2$ and $\rk K=2$,
choose an isomorphism $\alpha\colon F_2\to K$.  Since $H\leq K$, the map
$\beta=\alpha^{-1}\circ\varphi\colon F_2\to F_2$ is an injective
endomorphism and $\varphi=\alpha\circ\beta$.  The subgroup $K$ is proper in
$F_2$, so $\alpha$ is not an automorphism.  Also $H<K$, so
$\beta(F_2)<F_2$, and hence $\beta$ is not an automorphism.  This contradicts
primeness.  Therefore no such subgroup $K$ exists.

Conversely, suppose that $\varphi$ is injective and that there is no subgroup
$K$ with $H<K<F_2$ and $\rk K=2$.  If $\varphi=\alpha\circ\beta$ with
$\alpha,\beta\colon F_2\to F_2$ endomorphisms and neither an automorphism, then
the injectivity of $\varphi$ implies that $\beta$ is injective.  Moreover,
$\alpha(F_2)$ contains $H=\varphi(F_2)$, which has rank two because $\varphi$ is
injective.  Hence $\alpha(F_2)$ is non-cyclic, and therefore $\alpha$ is
injective by Lemma~\ref{L.injective.noncommuting}.  It follows that
$H=\varphi(F_2)=\alpha(\beta(F_2))<\alpha(F_2)<F_2$.  Since $\alpha$ is
injective, $\alpha(F_2)$ is free of rank two.  Thus $\alpha(F_2)$ is a proper
intermediate rank-two subgroup, contradiction.  Therefore $\varphi$ is prime.
\end{proof}

The following definition has been studied extensively in \cite{MVW}.

\begin{definition} \label{def:alg}
Let $H\leq J$ be free groups.  We say that $J$ is an
\emph{algebraic extension} of $H$, and write $H\leq_{\alg}J$, if $H$ is not
contained in a proper free factor of $J$.
\end{definition}

\begin{lemma}
Let $H\leq K\leq F_2$, $H$ non-cyclic and $\rk K=2$.  Then
$H\leq_{\alg}K$.
\end{lemma}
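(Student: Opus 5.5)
We argue by contradiction, directly from Definition~\ref{def:alg}. Suppose $H$ is contained in a proper free factor $A$ of $K$, and write $K = A * B$ with $B \neq 1$. The plan is to show that $A$ is forced to be cyclic, which is incompatible with $H$ being non-cyclic.

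First we record the rank. By the Grushko--Neumann theorem (or, more elementarily, because a free factor decomposition of a free group can be realized by splitting a free basis), we have
\[
\rk K = \rk A + \rk B .
\]
Since $\rk K = 2$ and $B$ is nontrivial, $\rk B \geq 1$, and therefore $\rk A \leq 1$. Hence $A$ is either trivial or infinite cyclic.

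Next we use the hypothesis on $H$. The inclusion $H \leq A$ together with $\rk A \leq 1$ means that $H$ is a subgroup of a cyclic group, so $H$ is cyclic (possibly trivial). This contradicts the assumption that $H$ is non-cyclic. Consequently $H$ lies in no proper free factor of $K$, which is exactly the statement $H \leq_{\alg} K$.

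The only step with any content is the rank additivity for free factors. For free groups of finite rank it follows quickly from abelianization: $K^{ab} \cong A^{ab} \oplus B^{ab}$, and the rank of a free group equals the rank of its free abelianization. If the paper prefers to avoid even this, it is enough to note that a proper free factor of a rank-two free group has rank at most one, since otherwise it would have finite index and contradict properness. The abelianization argument is the cleanest route, and I would use that.
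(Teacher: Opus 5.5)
Your proof is correct and is the same as the paper's: a proper free factor $A$ of the rank-two group $K=A*B$ has $\rk A\leq 1$ (your abelianization argument for $\rk K=\rk A+\rk B$ is a valid justification), so $A$ cannot contain the non-cyclic $H$. Drop the alternative aside, however: it is false that a rank-two subgroup of $F_2$ must have finite index (e.g.\ $\langle a^2,b\rangle$ has infinite index), so that route proves nothing. If you want to avoid abelianization, apply the Hopf property of $F_2$ to the retraction $K=A*B\to A$ instead.
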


\begin{proof}
If $H$ were contained in a proper free factor $L$ of $K$, then $\rk L\leq 1$
because $K$ has rank two.  This is impossible since $H$ is non-cyclic.
\end{proof}

Thus, for a rank-two subgroup $H\leq F_2$, non-primality of the corresponding
injective endomorphism is equivalent to the existence of a proper rank-two
algebraic extension $H\leq_{\alg}K<F_2$.

\subsection{Rank-two factorization lattices}

For a finitely generated non-cyclic subgroup $H\leq F_2$, let
$P_2(H)=\{K\leq F_2: H\leq K,\ \rk K=2\}$.  We regard $P_2(H)$ as a partially
ordered set by inclusion and consider it to be an analogue of the prime factorization of a natural number. It is implied by the study of algebraic extension in free groups \cite{MVW}, that this poset is finite, see below for details. Moreover, as a consequence of the solution to the Hanna-Neumann conjecture \cites{Mineyev,Friedman},
we will prove below that it is a lattice; but unlike for $\mathbb Z$, it is not distributive and in fact not even modular in general.

\medskip

By the preceding lemma, every element of $P_2(H)$ is algebraic over $H$.
Finiteness of algebraic extensions for finitely generated subgroups of free
groups \cite{MVW} therefore gives the following consequence.

\begin{corollary}
If $H\leq F_2$ is finitely generated and non-cyclic, then $P_2(H)$ is finite. 
\end{corollary}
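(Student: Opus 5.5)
The plan is to reduce the statement to the finiteness theorem for algebraic extensions from \cite{MVW}. That theorem says that a finitely generated subgroup $H$ of a free group $F$ has only finitely many algebraic extensions $H\leq_{\alg}J\leq F$. It is proved via Stallings foldings: every such $J$ arises by folding a quotient of the Stallings core graph $\Gamma(H)$, and $\Gamma(H)$ is a finite graph with only finitely many quotients.

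First, let $K\in P_2(H)$. Then $H\leq K\leq F_2$ with $\rk K=2$, and $H$ is non-cyclic by hypothesis. The preceding lemma therefore applies and gives $H\leq_{\alg}K$. Hence $P_2(H)$ is contained in the set $\mathcal{AE}(H)=\{J\leq F_2 : H\leq_{\alg}J\}$.

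Second, we invoke \cite{MVW}: since $H$ is finitely generated, $\mathcal{AE}(H)$ is finite. To make this concrete, recall the argument. Each $J\geq H$ is finitely generated here, because $J$ has rank $2$. The natural immersion $\Gamma(H)\to\Gamma(J)$ has an image $\Gamma'$ that is a quotient of $\Gamma(H)$, and $\pi_1(\Gamma')$ is a free factor of $J$ containing $H$. Algebraicity then forces $\pi_1(\Gamma')=J$. So $J$ is determined by a partition of the finite vertex set of $\Gamma(H)$, and there are only finitely many such partitions. Consequently $P_2(H)$, being a subset of a finite set, is finite.

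There is no real obstacle. The only point needing care is that $\pi_1$ of the image subgraph is a free factor of $\pi_1(\Gamma(J))$, which is standard. This fact is packaged in the cited result, so the proof amounts to combining the previous lemma with \cite{MVW}.
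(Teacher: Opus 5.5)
Your proposal is correct and is exactly the paper's argument: the preceding lemma gives $H\leq_{\alg}K$ for every $K\in P_2(H)$, and the finiteness of algebraic extensions of a finitely generated subgroup \cite{MVW} then shows that $P_2(H)$ is finite. Your sketch of why that finiteness holds is a correct unpacking of the cited result, which the paper simply quotes: the image of the Stallings graph of $H$ in that of $K$ carries a free factor of $K$ containing $H$, so algebraicity forces it to be all of $K$, and hence $K$ is determined by a vertex partition of a finite graph.
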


\begin{corollary}
Every injective endomorphism of $F_2$ admits a factorization into prime
endomorphisms. Moreover, modulo post- and pre-composition of adjacent factors by inverse automorphisms,
there are only finitely many such factorizations.
\end{corollary}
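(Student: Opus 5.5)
Both claims reduce to the finite poset $P_2(H)$, where $H=\varphi(F_2)$ for the given injective endomorphism $\varphi$; by the previous corollary $P_2(H)$ is finite. The dictionary is Lemma~\ref{L.primecriterion} together with the observation in its proof: a factorization $\varphi=\alpha\circ\beta$ into injective endomorphisms corresponds to the intermediate subgroup $K=\alpha(F_2)\in P_2(H)$, and $\beta=\alpha^{-1}\circ\varphi$ is then determined by $\alpha$. Replacing $\alpha$ by $\alpha\circ\sigma$ for an automorphism $\sigma$, which forces $\beta\mapsto\sigma^{-1}\circ\beta$, does not change $K$. Conversely, two isomorphisms $F_2\to K$ differ by precomposition with an automorphism of $F_2$. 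So, modulo this ambiguity, factorizations of length two correspond to elements $K\in P_2(H)$.

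For existence, I will induct on the length of the longest strictly increasing chain $H=K_0<K_1<\dots<K_m=F_2$ in $P_2(H)\cup\{F_2\}$. This length is finite because $P_2(H)$ is finite. If $\varphi$ is an automorphism there is nothing to prove. If $\varphi$ is prime, we are done. Otherwise Lemma~\ref{L.primecriterion} gives some $K$ with $H<K<F_2$ and $\rk K=2$, and the construction in its proof gives $\varphi=\alpha\circ\beta$ with $\alpha(F_2)=K$ and $\beta$ injective and non-surjective. Now apply induction to both factors:
\begin{itemize}
\item $P_2(\alpha(F_2))\cup\{F_2\}$ is contained in $\{L\in P_2(H)\cup\{F_2\}: L\geq K\}$, so every chain from $K$ upward extends to a longer chain starting at $H$.
\item $P_2(\beta(F_2))$ corresponds under the isomorphism $\alpha$ to $\{L\in P_2(H): L\leq K\}$, since $\alpha$ maps rank-two subgroups containing $\beta(F_2)$ to rank-two subgroups of $K$ containing $H$. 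Hence chains for $\beta$ become chains from $H$ to $K$.
\end{itemize}
In both cases the maximal chain length strictly decreases, so by induction $\alpha$ and $\beta$ factor into primes, and concatenating gives a factorization of $\varphi$.

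For finiteness, a factorization $\varphi=\pi_1\circ\cdots\circ\pi_r$ into primes determines the chain
\[
H<\pi_1\cdots\pi_{r-1}(F_2)<\dots<\pi_1(F_2)<F_2.
\]
The inclusions are strict because each $\pi_i$ is not surjective; primes are by definition not automorphisms, or else we discard automorphism factors by absorbing them into neighbours. I will show by induction on $r$ that the chain determines the factorization up to inserting $\sigma_i\circ\sigma_i^{-1}$ between adjacent factors. The first subgroup $\pi_1(F_2)$ determines $\pi_1$ up to precomposition by an automorphism. After absorbing that automorphism into $\pi_2\cdots\pi_r$, the remaining product is determined as $\pi_1^{-1}\circ\varphi$, and we recurse. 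Since chains in the finite set $P_2(H)$ are finite in number, there are finitely many factorizations modulo the stated equivalence.

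The main obstacle is bookkeeping rather than anything deep. One point is to check that the correspondence between $P_2(\beta(F_2))$ and the interval below $K$ in $P_2(H)$ really preserves rank two, which holds because $\alpha$ is an isomorphism onto $K$. The other is to state precisely that "modulo adjacent automorphisms" means exactly the identification induced by the non-uniqueness of the isomorphism $F_2\cong K$ at each level.
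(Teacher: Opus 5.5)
Your proposal is correct and follows the paper's argument: both reduce existence and finiteness to chains in the finite poset $P_2(H)$, where each strict step in the chain gives a factor that is determined up to an automorphism of $F_2$ via Lemma~\ref{L.primecriterion}. Your induction on maximal chain length and your recursion showing that a chain determines the factorization up to inserted $\sigma\circ\sigma^{-1}$ only make explicit the bookkeeping the paper leaves implicit, including the convention that automorphism factors are absorbed into their neighbours.
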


\begin{proof}
Let $\varphi\colon F_2\to F_2$ be injective and put $H=\varphi(F_2)$.  If
$\varphi$ is not prime, Lemma~\ref{L.primecriterion} gives a proper
intermediate rank-two subgroup $H<K<F_2$.  Choosing an isomorphism
$\alpha\colon F_2\to K$ gives a nontrivial factorization
$\varphi=\alpha\circ(\alpha^{-1}\circ\varphi)$.  Iterating this procedure
corresponds to passing upward in the finite poset $P_2(H)$, so the process
terminates.  Maximal chains in $P_2(H)$ therefore give prime factorizations.
Conversely, any prime factorization determines such a chain, up to the evident
freedom of inserting inverse automorphisms between adjacent factors.  Since
$P_2(H)$ is finite, only finitely many chains, and hence only finitely many
such factorizations, occur.
\end{proof}

\begin{proposition} \label{P.meet}
Let $H\leq F_2$ be non-cyclic and let $K,L\in P_2(H)$.  Then
$K\cap L\in P_2(H)$.  In particular, the meet of $K$ and $L$ in $P_2(H)$ is
their ordinary intersection.
\end{proposition}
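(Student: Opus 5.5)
We need $K\cap L$ to contain $H$ and to have rank exactly two. The containment $H\leq K\cap L$ is immediate since $H\leq K$ and $H\leq L$. So $K\cap L$ is a finitely generated subgroup of $F_2$ (intersections of finitely generated subgroups of free groups are finitely generated, by Howson's theorem). It is non-cyclic because it contains the non-cyclic group $H$; hence $\rk(K\cap L)\geq 2$. It remains to show $\rk(K\cap L)\leq 2$.

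For the upper bound I will use the Hanna Neumann inequality, now a theorem by \cite{Mineyev,Friedman}. For finitely generated subgroups $K,L$ of a free group with nontrivial intersection it gives
\[
\rk(K\cap L)-1\leq (\rk K-1)(\rk L-1).
\]
With $\rk K=\rk L=2$ the right-hand side is $1$, so $\rk(K\cap L)\leq 2$. Combined with the lower bound, $\rk(K\cap L)=2$, and therefore $K\cap L\in P_2(H)$. (The weaker bound $\rk(K\cap L)-1\leq 2(\rk K-1)(\rk L-1)$ of Hanna Neumann herself would only give rank at most $3$, which is why the full conjecture is needed.)

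For the "in particular" statement: any $M\in P_2(H)$ with $M\leq K$ and $M\leq L$ satisfies $M\leq K\cap L$. Since $K\cap L$ itself lies in $P_2(H)$, it is the greatest lower bound of $K$ and $L$ in the poset $P_2(H)$, so the meet is the ordinary intersection.

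The only nontrivial input is the strengthened Hanna Neumann inequality. Everything else is bookkeeping about containment and non-cyclicity.
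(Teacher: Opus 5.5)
Your proof is correct and is essentially the paper's argument: the strengthened Hanna Neumann inequality gives $\rk(K\cap L)\leq 2$, and containing the non-cyclic group $H$ forces $\rk(K\cap L)\geq 2$. Your additional remarks are all correct and only make explicit what the paper leaves implicit; they are the appeal to Howson's theorem, the explicit greatest-lower-bound argument, and the observation that Hanna Neumann's original factor-$2$ bound would only give rank at most $3$.
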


\begin{proof}
Using the  Hanna-Neumann inequality \cite{Mineyev},
$\max\{\rk(K\cap L)-1,0\}\leq (\rk K-1)(\rk L-1)=1$, so $\rk(K\cap L)\leq 2$.
On the other hand, $H\leq K\cap L$, and since $H$ is non-cyclic, the subgroup
$K\cap L$ is non-cyclic.  Hence $\rk(K\cap L)=2$.
\end{proof}

The join operation is not the ordinary subgroup join, because $\langle K,L\rangle$
may have rank larger than two.  The correct substitute is the minimal rank-two
overgroup.

\begin{proposition}
Let $G\leq F_2$ be finitely generated and non-cyclic.  Then there is a unique
minimal rank-two subgroup of $F_2$ containing $G$.
\end{proposition}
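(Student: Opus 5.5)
The proof will rest on two facts: $P_2(G)$ is nonempty and finite, and it is closed under intersection (Proposition~\ref{P.meet}). A unique minimal element is then the intersection of all members of $P_2(G)$.

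First I check that $P_2(G)$ is nonempty. It contains $F_2$ itself, since $G\leq F_2$ and $\rk F_2=2$.

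Next I check finiteness. Every $K\in P_2(G)$ satisfies $G\leq_{\alg} K$ by the lemma preceding the corollaries. That lemma was stated for $H\leq K$ with $H$ non-cyclic and $\rk K=2$; its proof only uses that $G$ is non-cyclic, so it applies verbatim to $G$. The finiteness result of \cite{MVW} for algebraic extensions of the finitely generated subgroup $G$ then shows $P_2(G)$ is finite. This is exactly the corollary above, applied with $H=G$. Nothing in that corollary uses $\rk G=2$; it only needs $G$ finitely generated and non-cyclic.

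Now write $P_2(G)=\{K_1,\dots,K_m\}$ and set $M=K_1\cap\dots\cap K_m$. Proposition~\ref{P.meet} was stated for $H$ non-cyclic, and its proof only uses the Hanna--Neumann inequality together with the fact that $K\cap L$ contains the non-cyclic group $H$. So it holds with $G$ in place of $H$: if $K,L\in P_2(G)$ then $K\cap L\in P_2(G)$. Inducting on $m$ gives $M\in P_2(G)$. Thus $M$ is a rank-two subgroup containing $G$, and $M\leq K$ for every $K\in P_2(G)$. Hence $M$ is the least element of $P_2(G)$: it is minimal, and any other minimal element $K$ satisfies $M\leq K$, so $K=M$ by minimality.

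The only step needing care is confirming that the earlier results, phrased for $H$, really apply to an arbitrary finitely generated non-cyclic $G$, possibly of rank greater than two. As noted, both proofs use only non-cyclicity (and finite generation for \cite{MVW}). Finiteness is convenient but not essential: a descending chain argument would also work, since a properly descending chain of rank-two subgroups containing $G$ would consist of distinct algebraic extensions of $G$. With finiteness available, the plain finite intersection is cleanest.

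This also identifies the join in $P_2(H)$: for $K,L\in P_2(H)$, take the least element of $P_2(\langle K,L\rangle)$.
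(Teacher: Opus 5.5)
Your proof is correct and follows the paper's route. $P_2(G)$ contains $F_2$, it is finite by the algebraic-extension corollary, and repeated use of Proposition~\ref{P.meet} shows that the intersection of all its members is a rank-two subgroup containing $G$, hence the unique minimal element. Your checks that these earlier results need only finite generation and non-cyclicity of $G$, not rank two, are accurate and spell out what the paper uses implicitly.
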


\begin{proof}
The set $P_2(G)$ is nonempty because $F_2\in P_2(G)$, and it is finite by the
preceding corollary.  If $P_2(G)=\{K_1,\dots,K_m\}$, then repeated application
of the Proposition \ref{P.meet} shows that $K_1\cap\cdots\cap K_m$ still has rank two
and contains $G$.  It is therefore the unique minimal element of $P_2(G)$.
\end{proof}

\begin{definition}
For a finitely generated non-cyclic subgroup $G\leq F_2$, let
$\operatorname{cl}_2(G)$ denote the unique minimal rank-two overgroup of $G$.
We call $\operatorname{cl}_2(G)$ the \emph{rank-two closure} of $G$.
\end{definition}

\begin{theorem}
Let $H\leq F_2$ be finitely generated and non-cyclic.  Then $P_2(H)$ is a
finite lattice.  Its meet and join are given by
$K\wedge L=K\cap L$ and $K\vee L=\operatorname{cl}_2(\langle K,L\rangle)$.
\end{theorem}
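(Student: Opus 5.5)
The plan is to verify the lattice axioms directly from the two preceding results, Proposition~\ref{P.meet} and the existence of the rank-two closure. Finiteness of $P_2(H)$ is already the corollary on finiteness of algebraic extensions. Since $P_2(H)$ is partially ordered by inclusion, it suffices to show two things: every pair $K,L\in P_2(H)$ has a greatest lower bound equal to $K\cap L$, and a least upper bound equal to $\operatorname{cl}_2(\langle K,L\rangle)$.

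For the meet, Proposition~\ref{P.meet} gives $K\cap L\in P_2(H)$. It is clearly a lower bound of $K$ and $L$. If $M\in P_2(H)$ satisfies $M\leq K$ and $M\leq L$, then $M\leq K\cap L$. Hence $K\cap L$ is the infimum in $P_2(H)$.

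For the join, put $G=\langle K,L\rangle$. This subgroup is finitely generated, being generated by finite bases of $K$ and $L$. It is non-cyclic because it contains the non-cyclic group $H$. So the proposition on the unique minimal rank-two overgroup applies, and $\operatorname{cl}_2(G)$ is defined; moreover $\operatorname{cl}_2(G)$ is the least element of $P_2(G)$. Since $H\leq K\leq G\leq\operatorname{cl}_2(G)$ and $\rk\operatorname{cl}_2(G)=2$, we get $\operatorname{cl}_2(G)\in P_2(H)$, and it is an upper bound of $K$ and $L$.

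Now let $M\in P_2(H)$ be any upper bound of $K$ and $L$. Then $G\leq M$ and $\rk M=2$, so $M\in P_2(G)$. Since $\operatorname{cl}_2(G)$ was constructed as the intersection of all members of $P_2(G)$, i.e.\ the least element of $P_2(G)$, we obtain $\operatorname{cl}_2(G)\leq M$. Thus $\operatorname{cl}_2(G)$ is the supremum.

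The only step needing care is this last point: we need minimality in $P_2(G)$ to be minimality among \emph{all} rank-two overgroups of $G$, not merely minimality among elements of $P_2(H)$. This holds because $P_2(G)\subseteq P_2(H)$ and the closure is the least element of $P_2(G)$, not just a minimal one. That fact is exactly what the intersection argument via Proposition~\ref{P.meet} provides. Finally, $P_2(H)$ has top element $F_2$ and bottom element $\operatorname{cl}_2(H)$, so it is a bounded finite lattice.
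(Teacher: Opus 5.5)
Your proposal is correct and follows the paper's proof: the meet comes from Proposition~\ref{P.meet}, and the join is $\operatorname{cl}_2(\langle K,L\rangle)$, which lies below every rank-two subgroup containing $K$ and $L$. Your explicit remark that $\operatorname{cl}_2(G)$ is the least element of $P_2(G)$, obtained as the intersection of all its members, and not merely a minimal one, is the point the paper uses implicitly when it asserts that the closure is contained in every rank-two overgroup of both $K$ and $L$.
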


\begin{proof}
Finiteness and the meet formula were proved above.  For the join, let
$K,L\in P_2(H)$ and set $G=\langle K,L\rangle$.  Then $G$ is finitely generated
and non-cyclic, so $\operatorname{cl}_2(G)$ exists.  It contains $K$ and $L$,
and it is contained in every rank-two subgroup containing both.  Hence it is
the least upper bound of $K$ and $L$ in $P_2(H)$.
\end{proof}

\begin{corollary}
Every finitely generated subgroup $G\leq F_2$ of rank at least two is
contained in a unique minimal rank-two subgroup of $F_2$.
\end{corollary}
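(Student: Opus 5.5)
The plan is to reduce this corollary to the preceding Proposition, which is stated for finitely generated non-cyclic subgroups. So the only thing to check is that a finitely generated subgroup $G\leq F_2$ of rank at least two is non-cyclic.

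Step one: a cyclic subgroup of a free group is either trivial or infinite cyclic, so it is free of rank $0$ or $1$. Since $G$ is a subgroup of the free group $F_2$, the Nielsen--Schreier theorem says $G$ is free. Its rank is a well-defined invariant, for instance the rank of its abelianization. Hence if $\rk G\geq 2$, then $G$ cannot be cyclic.

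Step two: apply the Proposition on the existence of a unique minimal rank-two overgroup to $G$. This gives the unique minimal element $\operatorname{cl}_2(G)$ of $P_2(G)$. That is exactly a minimal rank-two subgroup of $F_2$ containing $G$, and it is contained in every rank-two subgroup containing $G$, which gives uniqueness.

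There is no real obstacle here. The only point to note is that "minimal" is meant with respect to inclusion among rank-two subgroups containing $G$. Uniqueness follows because the intersection of all members of the finite set $P_2(G)$ again lies in $P_2(G)$, by repeated use of Proposition~\ref{P.meet}. One remark on the edge case: when $\rk G=2$, the closure $\operatorname{cl}_2(G)$ is simply $G$ itself.
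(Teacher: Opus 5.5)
Your proposal is correct and takes the same route as the paper. The corollary is a restatement of the preceding Proposition, whose proof intersects the finitely many members of $P_2(G)$ using Proposition~\ref{P.meet}. The only thing left to check is that rank at least two forces $G$ to be non-cyclic, which you verify.
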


\subsection{Examples of rank-two factorization lattices}
\label{sec:examples}

The lattices $P_2(H)$ can already be quite varied.  The next two examples are
not used later, but they indicate the range of possible behaviour. We use the standard Stallings graph convention, see \cite{Stallings} for details. 
For convenience, we recall the standard facts about folded labelled core graphs of subgroups of
free groups. Let \(F(a,b)\) be the free group on the finite basis \(a,b\). An
\(\{a,b\}\)-labelled graph is a graph whose oriented edges are labelled by elements of
\(\{a,a^{-1},b,b^{-1}\}\), with opposite orientations carrying inverse labels. A connected
pointed \(\{a,b\}\)-labelled graph \((\Gamma,*)\) represents the subgroup
\[
\pi_1(\Gamma,*)=\{\,w\in F(a,b): w \text{ labels a reduced closed path at } *\,\}.
\]
It is called folded if, at every vertex, there is at most one outgoing edge with
a given label, equivalently the natural map to the rose with petals \(\{a,b\}\) is
locally injective. It is called a core graph if every edge lies on a reduced
closed path.

For every finitely generated subgroup \(H\leq F(a,b)\), there is a finite
connected folded pointed labelled graph \(\Gamma(H)\), unique up to pointed
label-preserving isomorphism, such that
\[
w\in H
\quad\Longleftrightarrow\quad
w \text{ labels a reduced closed path in } \Gamma(H) \text{ based at } * .
\]
It is obtained by taking a bouquet of loops labelled by a finite generating set
of \(H\), repeatedly folding pairs of equally labelled edges with the same
initial vertex, and then deleting all non-core hanging trees. Conversely, every
finite connected folded pointed labelled core graph represents a finitely
generated subgroup of \(F(a,b)\).

If \(H\neq \{1\}\), then the rank of \(H\) is the first Betti number of its core:
$
\operatorname{rk}(H)=|E(\Gamma(H))|-|V(\Gamma(H))|+1,
$
where \(E(\Gamma(H))\) denotes the set of unoriented edges.

Algebraic overgroups of \(H\) correspond to quotients of \(\Gamma(H)\) obtained by identifying vertices and edges in a way that respects the labels and directions, and then folding.  In particular, rank-two overgroups of \(H\) correspond to quotients of \(\Gamma(H)\) whose core has first Betti number two.

\begin{proposition}
There exists a non-cyclic subgroup $H\leq F_2$ for which $P_2(H)$ is not
modular.
\end{proposition}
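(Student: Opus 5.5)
To prove the proposition I will exhibit an explicit non-cyclic $H$ together with a pentagon sublattice $N_5$ inside $P_2(H)$. A lattice is modular if and only if it contains no $N_5$, so this suffices. Concretely, I will look for rank-two subgroups $A<C$ and $B$ of $F_2$, all proper, such that
\[
A\cap B=C\cap B=H,\qquad \operatorname{cl}_2(\langle A,B\rangle)=\operatorname{cl}_2(\langle C,B\rangle)=F_2 .
\]
Then $H,A,C,B,F_2$ are five elements of $P_2(H)$ forming a pentagon with respect to the meet and join of the lattice theorem above. In that theorem the meet is the intersection and the join is the rank-two closure. The modular law then fails, since
\[
A\vee(B\wedge C)=A\vee H=A\neq C=C\wedge F_2=(A\vee B)\wedge C .
\]

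The joins are the easy half. I will choose $B$ so that $\langle A,B\rangle$ already contains both $a$ and $b$. Then $\langle A,B\rangle=F_2$, and the rank-two closure is automatically $F_2$, with no further computation needed.

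For the chain $A<C$ I will first try the nested subgroups $A=\langle a,b^{4}\rangle$ and $C=\langle a,b^{2}\rangle$ (or a variant such as $\langle a,bab^{-1}\rangle<\langle a,b\rangle$ composed with a proper inclusion). Their Stallings graphs are a loop labelled $a$ at the base point together with a $b$-cycle of length $4$, respectively $2$. For $B$ I will take a rank-two subgroup containing $b$, for instance one whose Stallings graph has a $b$-loop at the base point plus one more cycle. The meets are then computed by the standard pullback (fibre product) of Stallings graphs: $\Gamma(A\cap B)$ is the core of the component of $\Gamma(A)\times\Gamma(B)$ containing the pair of base points. The Hanna--Neumann bound from Proposition~\ref{P.meet} guarantees both intersections have rank at most $2$. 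Non-cyclicity I will verify directly from the product graph, by exhibiting two independent cycles.

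The main obstacle is arranging the \emph{equality} $A\cap B=C\cap B$ while $A\neq C$. The naive choice above tends to give $b^4\in A\cap B$ but $b^2\in C\cap B$, which breaks the equality. So the second cycle of $B$ must be chosen so that $B$ "sees" only the part of $C$ already lying in $A$. Concretely, I want $B\cap\langle b\rangle$ to be too small to distinguish $b^2$ from $b^4$, e.g. $B\cap\langle b\rangle=\langle b^4\rangle$ or trivial. This means giving up the requirement that $b\in B$, and obtaining generation of $F_2$ from $A\cup B$ in another way, e.g. via $B\ni ba$ together with $a\in A$.

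My plan is to search among small Stallings graphs for $B$ (with $3$ or $4$ vertices), computing both fibre products by hand. The goal is a $B$ for which the base components of $\Gamma(A)\times\Gamma(B)$ and $\Gamma(C)\times\Gamma(B)$ have isomorphic cores via the projection to $\Gamma(B)$. Once a candidate is found, I will verify the remaining properness conditions: $H\neq A$, $H\ne B$, and $B$ incomparable with $C$. These all follow from the graphs not being isomorphic as pointed labelled graphs, so they are routine.
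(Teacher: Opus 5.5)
Your reduction is correct, and it is the same mechanism as the paper's proof. For $H=\langle b^2,a^{-1}b^2a\rangle$, the paper's overgroups $B<A$ and $C$ satisfy $A\cap C=H$ and $B\vee C=F_2$, giving the pentagon $H<B<A<F_2$, $H<C<F_2$.

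As written, however, your proposal is a plan rather than a proof. The proposition is a pure existence statement, so its whole content is an explicit example together with the computation of the relevant intersections, and that is exactly the step you leave open. You correctly isolate the one nontrivial requirement: $C\cap B\le A$ with $C\cap B$ non-cyclic. Since $A\cap B\le C\cap B$ is automatic, this is all you need to check; you do not need an isomorphism of two fibre products. But you then defer it to an unspecified search. The only concrete candidate you write down fails, as you observe yourself, and no working $B$ is exhibited or verified. The missing example is the gap.

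Your own heuristics do lead to an example. Keep $A=\langle a,b^4\rangle<C=\langle a,b^2\rangle$ and take $B=\langle ba,b^4\rangle$.
\begin{itemize}
\item Then $B\cap\langle b\rangle=\langle b^4\rangle$, and $\langle A,B\rangle=F_2$ because $a\in A$ and $ba\in B$.
\item The folded graph of $B$ is the $b$-cycle $p\to t_1\to t_2\to t_3\to p$ with one extra $a$-edge $t_1\to p$.
\item $\Gamma(C)$ is a vertex $x$ carrying an $a$-loop, together with $b$-edges $x\to y\to x$.
\item In the fibre product of the two graphs, the component of $(p,x)$ consists of three pieces: the $b$-cycle through $(p,x),(t_1,y),(t_2,x),(t_3,y)$; the $b$-cycle through $(t_1,x),(t_2,y),(t_3,x),(p,y)$; and the single $a$-edge $(t_1,x)\to(p,x)$.
\end{itemize}
Hence $B\cap C=\langle b^4,a^{-1}b^4a\rangle\le A$. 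So $H:=A\cap B=C\cap B=\langle b^4,a^{-1}b^4a\rangle$ is non-cyclic, and in $P_2(H)$ one gets $A\vee(B\wedge C)=A\ne C=(A\vee B)\wedge C$. With this computation (or the paper's example) inserted, your argument becomes a complete proof.
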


\begin{proof}
Let $H=\langle b^2,\ a^{-1}b^2a\rangle\leq F(a,b)$.  The folded labelled core
graph of $H$ looks as follows:
\[
\begin{tikzpicture}[x=1.4cm,y=1.2cm,>=stealth]
        \coordinate (v2) at (0,0);
        \coordinate (v3) at (1.8,0);
        \coordinate (v0) at (0,1.8);
        \coordinate (v1) at (1.8,1.8);

        \fill (v2) circle (1.2pt);
        \draw (v2) circle (4pt);
        \node[left=3pt] at (v2) {$2$};

        \fill (v3) circle (1.2pt);
        \node[right=3pt] at (v3) {$3$};

        \fill (v0) circle (1.2pt);
        \node[left=3pt] at (v0) {$0$};

        \fill (v1) circle (1.2pt);
        \node[right=3pt] at (v1) {$1$};

        \draw[->] (v2) -- node[left] {$a$} (v0);
        \draw[->,bend left=18] (v0) to node[above] {$b$} (v1);
        \draw[->,bend left=18] (v1) to node[below] {$b$} (v0);
        \draw[->,bend left=18] (v2) to node[above] {$b$} (v3);
        \draw[->,bend left=18] (v3) to node[below] {$b$} (v2);
\end{tikzpicture}
\]
The corresponding rank-two overgroups are represented by the following
partitions of the vertex set $\{0,1,2,3\}$:
\[
\begin{array}{cccccccccccc }
T&=&0000,&
A&=&0011,&
B&=&0012,&
C&=&0101,\\
D&=&0110,&
E&=&0122,&
H&=&0123.&
\end{array}
\]
The resulting Hasse diagram is
\[
\begin{tikzpicture}[x=1.6cm,y=1.2cm]
        \node (H) at (0,0) {$H$};
        \node (B) at (-0.8,1) {$B$};
        \node (E) at (0.8,1) {$E$};
        \node (C) at (-2,1.7) {$C$};
        \node (D) at (2,1.7) {$D$};
        \node (A) at (0,2) {$A$};
        \node (T) at (0,3) {$T$};

        \draw (H) -- (B) -- (A) -- (T);
        \draw (H) -- (E) -- (A);
        \draw (H) -- (C) -- (T);
        \draw (H) -- (D) -- (T);
\end{tikzpicture}
\]
Thus the Hasse diagram has seven elements.  Taking $x=B$, $y=C$, and $z=A$, we
have $x\leq z$, $y\wedge z=H$, and hence $x\vee(y\wedge z)=B$.  On the other
hand, $x\vee y=T$, so $(x\vee y)\wedge z=A$.  Therefore
$x\vee(y\wedge z)=B\neq A=(x\vee y)\wedge z$, and the modular law fails.
\end{proof}

The previous example shows that there is no uniqueness to the "number" of prime factors of a non-prime endomorphism.  The next example shows that there is also no upper bound on the number of prime factorizations even under reasonable constraints.

\begin{proposition}
For each prime $p$, let $H_p=\langle a^p,\ ba^p b^{-1}\rangle\leq F(a,b)$.
Then $P_2(H_p)$ has height $3$ and cardinality $p+5$.  In particular,
rank-two overgroup lattices of height three have unbounded size.
\end{proposition}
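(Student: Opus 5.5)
The plan is to use the Stallings graph description of overgroups recalled above. I will classify all rank-two overgroups of $H_p$ as label-preserving quotients of $\Gamma(H_p)$ whose folded core has Betti number two. The core $\Gamma(H_p)$ consists of two $a$-cycles of length $p$:
\begin{itemize}
\item a cycle $C_1$ through the base vertex $v_0$, reading $a^p$;
\item a cycle $C_2$ through a vertex $w$, also reading $a^p$;
\item a single $b$-edge from $v_0$ to $w$.
\end{itemize}
So there are $2p$ vertices and $2p+1$ edges, and the rank is $2$.

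First I justify the reduction to quotients. If $H_p\le K$ with $\rk K=2$, there is a pointed label-preserving immersion $\Gamma(H_p)\to\Gamma(K)$. Its image is a core subgraph whose fundamental group contains $H_p$. If the image were a proper subgraph of the core $\Gamma(K)$, its fundamental group would be a proper free factor of $K$ containing $H_p$. The lemma that $H\le_{\alg}K$ for non-cyclic $H$ and $\rk K=2$ rules this out. Hence $\Gamma(K)$ is the folded image of $\Gamma(H_p)$, and $K$ is determined by which vertices get identified.

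Next I enumerate the identifications, using that $p$ is prime. Identifying two distinct vertices of the same $a$-cycle folds it to an $a$-cycle whose length is a proper divisor of $p$, hence to a single $a$-loop. Identifying a vertex of $C_1$ with a vertex of $C_2$ folds the two cycles onto one another with some shift $i\in\{0,\dots,p-1\}$. This merges $v_0$ with the vertex $w a^{i}$ and turns the $b$-edge into an edge inside a single $p$-cycle. This gives the cases below.
\begin{itemize}
\item No identification: $H_p$ itself.
\item Only $C_1$ collapsed: $\langle a, ba^pb^{-1}\rangle$.
\item Only $C_2$ collapsed: $\langle a^p, bab^{-1}\rangle$.
\item Both collapsed, no cross identification: $\langle a, bab^{-1}\rangle$.
\item Cross identification with shift $i$ and no collapse: a $p$-cycle with one extra $b$-edge, so $p$ vertices and $p+1$ edges, giving the $p$ subgroups $\langle a^p, ba^{i}\rangle$ (up to the obvious normalisation of $i$).
\item Cross identification together with any collapse: everything folds to the rose, giving $F_2$.
\end{itemize}
In every case I check that the folded graph is a core of Betti number $2$ and that distinct cases give distinct pointed graphs, hence distinct subgroups. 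This gives $1+3+p+1=p+5$ elements.

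Finally, I read off the order relations from which quotients factor through which. One maximal chain is
\[
H_p<\langle a,ba^pb^{-1}\rangle<\langle a,bab^{-1}\rangle<F_2,
\]
which has length three. Each $\langle a^p,ba^i\rangle$ lies only below $F_2$, and the two one-sided collapses lie only below $\langle a,bab^{-1}\rangle$ and $F_2$. So no chain is longer and the height is $3$.

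The main obstacle is making the enumeration genuinely exhaustive. One must argue carefully that any identification pattern either lands in one of the listed folded graphs or produces Betti number $\neq 2$ (in fact it collapses to the rose). One must also check that identifying non-base vertices across the two cycles is covered by some shift $i$. I will handle both points by tracking only the partition restricted to $C_1\cup C_2$ modulo the $\mathbb Z/p$ rotation, where primality leaves just the trivial and full options on each cycle.
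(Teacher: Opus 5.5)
Your proposal is correct and follows the paper's argument essentially step for step. You treat rank-two overgroups as folded quotients of the Stallings core of $H_p$. You use primality of $p$ to see that each $a$-cycle either survives or collapses to an $a$-loop, and that a cross identification without collapse is one of $p$ cyclic shifts. This gives the same count $1+p+2+1+1=p+5$ and the same Hasse diagram, with longest chain $H_p<\langle a,ba^pb^{-1}\rangle<\langle a,bab^{-1}\rangle<F_2$.

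Your one addition is the argument, via algebraicity of rank-two overgroups, that the immersion $\Gamma(H_p)\to\Gamma(K)$ must be surjective. The paper does not prove this separately; it takes it from its general remark that algebraic overgroups correspond to quotients of the Stallings graph.
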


\begin{proof}
The folded core graph of $H_p$ consists of two directed $a$-cycles of length
$p$, joined by one $b$-edge.  Since $p$ is prime, a folded quotient of a
directed $p$-cycle is either again a $p$-cycle or a single vertex with an
$a$-loop.  Moreover, if a vertex of one $a$-cycle is identified with a vertex
of the other and neither cycle is collapsed, then folding forces the two cycles
to be identified with a fixed cyclic shift.  There are exactly $p$ such
diagonal identifications.

The rank-two quotients are therefore as follows: the original graph; the $p$
diagonal quotients; two quotients in which exactly one of the two $a$-cycles is
collapsed to an $a$-loop; one quotient in which both $a$-cycles are collapsed
but the endpoints of the $b$-edge remain distinct; and finally the rose, hence
the full group $F(a,b)$.  Altogether this yields $1+p+2+1+1=p+5$ rank-two
overgroups.

Writing $D_0,\dots,D_{p-1}$ for the diagonal quotients, $L$ and $R$ for the
two quotients in which exactly one $a$-cycle is collapsed, and $M$ for the
quotient in which both $a$-cycles are collapsed but the endpoints of the
$b$-edge remain distinct, the Hasse diagram has the schematic form
\[
\begin{tikzpicture}[x=1.4cm,y=1.1cm]
        \node (H) at (0,0) {$H_p$};
        \node (L) at (0,1) {$L$};
        \node (R) at (2.1,1) {$R$};
        \node (D0) at (-4.2,1.7) {$D_0$};
        \node (Dots) at (-3,1.7) {$\cdots$};
        \node (Dp) at (-1.8,1.7) {$D_{p-1}$};
        \node (M) at (0,2) {$M$};
        \node (T) at (0,3) {$F(a,b)$};

        \draw (H) -- (L) -- (M) -- (T);
        \draw (H) -- (R) -- (M);
        \draw (H) -- (D0) -- (T);
        \draw (H) -- (Dp) -- (T);
\end{tikzpicture}
\]
In particular, the chain
\[
        H_p<L<M<F(a,b)
\]
has length three, and no longer chain can occur from the description above.
\end{proof}

\begin{question}
\label{Q:lattice}
If $P_2(H)$ has height two, is its cardinality bounded?  More sharply, must
$|P_2(H)|\leq 4$?
\end{question}

\section{Stallings graphs and clean pairs}\label{S.clean}

We now turn to the preparations for the proof of the prime counting theorem.
A reduced word $w \in F_2$ determines a cyclic word after cyclic reduction.  We shall
speak about cyclic occurrences of subwords in the cyclic reductions of
$u,u^{-1},v,v^{-1}$.
The following terminology is a convenient reformulation of the usual
small-cancellation condition; compare \cite{LS}.

\begin{definition}
Let $L\geq 1$.  A pair of reduced words $(u,v)$ is called \emph{$L$-clean} if
the following hold.

\begin{enumerate}
\item The cyclic reductions of $u$ and $v$ have length at least $6L$.

\item In the four cyclic words obtained from the cyclic reductions of \(u^{\pm1}\) and \(v^{\pm1}\), no word of length \(L\) occurs in two distinct cyclic positions.
\end{enumerate}
\end{definition}

\begin{lemma}
Let $(u,v)$ be $L$-clean.  Then
$\langle u,v\rangle$ is freely generated by $u$ and $v$.
\end{lemma}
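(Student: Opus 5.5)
By Lemma~\ref{L.injective.noncommuting}, it suffices to show that $u$ and $v$ do not commute. In a free group, two elements commute exactly when they are powers of a common element. So I will suppose, for contradiction, that $u=g^k$ and $v=g^m$ for some $g\in F_2$ and nonzero integers $k,m$. Neither $u$ nor $v$ is trivial, since their cyclic reductions have length at least $6L$.

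The plan is to pass to cyclic reductions. Write $g=xhx^{-1}$ with $h$ cyclically reduced. Then the cyclic reduction of $u$ is $h^{k}$ and that of $v$ is $h^{m}$; for negative exponents these are cyclic words in $h^{-1}$. Replacing $u$ by $u^{-1}$ or $v$ by $v^{-1}$ where needed, which is harmless because the clean condition is symmetric under $u\leftrightarrow u^{-1}$ and $v\leftrightarrow v^{-1}$, I may take $k,m\geq 1$.

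I then split into two cases.

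\emph{Case $k\geq 2$ (or symmetrically $m\geq 2$).} The cyclic word $h^k$ is invariant under rotation by $|h|$, and $|h|<|h^k|$, so this is a nontrivial rotation. The subword of length $L$ starting at position $0$ equals the one starting at position $|h|$. These are two distinct cyclic positions in the cyclic reduction of $u$, because $|h^k|\geq 6L$ makes the cyclic word long enough to contain a length-$L$ window. This contradicts condition~(2).

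\emph{Case $k=m=1$.} Then the cyclic reductions of $u$ and $v$ coincide as cyclic words. Any length-$L$ subword of $u$ therefore occurs both in the cyclic word of $u$ and in that of $v$, and these count as distinct cyclic positions because they lie in different members of the four cyclic words. Again this contradicts condition~(2).

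So $u$ and $v$ do not commute, and Lemma~\ref{L.injective.noncommuting} (or its proof) shows that $\langle u,v\rangle$ is free of rank two with basis $u,v$.

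The main obstacle is interpretive. I have to read ``two distinct cyclic positions'' as allowing the two occurrences to lie in different cyclic words among $u^{\pm1},v^{\pm1}$. I also need to handle the degenerate case where a cyclic reduction equals the inverse of another's. In the case $k=1$, $m=-1$, the reduction $v=u^{-1}$ is absorbed by the inversion step, since a length-$L$ subword of $u$ then appears in both the $u$-word and the $v^{-1}$-word.

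If condition~(2) were meant only within a single word, I would instead verify freeness directly by a small-cancellation argument. I would fold the bouquet of the two cyclically reduced loops, conjugated appropriately. Because any common subword of length $L$ is excluded, folding can only identify initial or terminal segments of length less than $L$. Since $6L$ exceeds the total possible folding of $2L$ at each end, each loop retains an unfolded middle arc. The resulting core is then a theta or barbell graph of Betti number two, which gives rank two directly.
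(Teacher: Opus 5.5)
Your proposal is correct and follows essentially the same route as the paper: reduce to non-commutation via Lemma~\ref{L.injective.noncommuting}, observe that commuting $u,v$ have cyclic reductions that are powers of a common cyclically reduced word (up to inversion), and extract a repeated length-$L$ window. Your case split ($\max\{k,m\}\geq 2$ via periodicity, $k=m=1$ via a repeat across two of the four cyclic words) spells out what the paper compresses into one sentence. The cross-word reading of condition~(2) is the one the paper uses (see the proof of Lemma~\ref{lem:edge-traversal}), so your fallback paragraph is unnecessary. It would also fail: under a single-word reading, $(u,u)$ with a suitable $u$ would be clean while $\langle u,u\rangle$ is cyclic.
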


\begin{proof}
It suffices to show that \(u\) and \(v\) do not commute, since two
noncommuting elements of \(F_2\) generate a free subgroup of rank two, and hence
the given two generators freely generate it.

Suppose that \(u\) and \(v\) commute. Then they lie in a common cyclic subgroup
of \(F_2\). Hence their cyclic reductions are positive powers of a common cyclically reduced word, up to cyclic conjugacy and inversion. This contradicts \(L\)-cleanness. Thus \(u\) and \(v\) do not commute, and the claim follows.
\end{proof}

We shall use the following convention.  For a finitely generated subgroup
\(K\leq F_2\), let \(\widehat\Gamma(K)\) denote its pointed Stallings graph and
let \(\Gamma(K)\) denote its core, obtained from \(\widehat\Gamma(K)\) by deleting
the stem and ignoring the basepoint. Thus \(\Gamma(K)\) is regarded as an unpointed folded labelled core
graph, while \(\widehat\Gamma(K)\) remembers the basepoint. For a finitely generated subgroup \(K\leq F_2\), we write
$
        \operatorname{vol}(K)=|E(\Gamma(K))|
$
for the number of unoriented edges in the core $\Gamma(K)$.
If the stem of \(\widehat\Gamma(K)\) has label \(\gamma\), then conjugating
\(K\) by \(\gamma^{-1}\) moves the basepoint to the point where the stem meets
\(\Gamma(K)\).  The pointed Stallings graph of \(\gamma^{-1}K\gamma\) has no
stem and has the same core as \(K\).  Moreover, for every \(w\in K\), the cyclic
reduction of \(\gamma^{-1}w\gamma\) is cyclically conjugate to the cyclic
reduction of \(w\).  Hence the \(L\)-cleanness condition below is invariant under
such a common conjugation.
In the deterministic arguments of this section we shall therefore freely replace
a pair \(H\leq K\) by a common conjugate, and assume that the basepoint of
\(K\) lies in \(\Gamma(K)\).  This preserves rank, algebraicity, properness of
inclusions, \(L\)-cleanness, and the quantity
$
        \operatorname{vol}(K).
$

The key deterministic input is the following. It says that an \(L\)-clean
rank-two subgroup cannot sit properly inside a rank-two subgroup whose
Stallings core has large volume.

\begin{proposition} \label{P.volume}
Let $(u,v)$ be $L$-clean and put $H=\langle u,v\rangle$.
If $H\leq_{\alg}K$ with $H \not =K$ and $\rk K=2$, then $\vol(K)<3L$.
\end{proposition}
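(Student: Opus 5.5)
We argue by contradiction: assume $\vol(K)\geq 3L$, and show that then $H=K$. After the common conjugation described above, we may assume the basepoint of $K$ lies on $\Gamma(K)$. Since $H\leq K$, there is a label-preserving immersion $f\colon\widehat\Gamma(H)\to\Gamma(K)$. Each of $u,v$ is read along a closed path in $\Gamma(K)$ based at the basepoint. Up to the stem, the cyclic reductions $\bar u,\bar v$ trace closed immersed cycles $c_u,c_v$ in $\Gamma(K)$, of lengths $|\bar u|,|\bar v|\geq 6L$.

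\textbf{Step 1: surjectivity of $f$.} Because $H\leq_{\alg}K$, the image $f(\Gamma(H))$ cannot be a proper subgraph of $\Gamma(K)$ carrying a proper free factor. The case to handle is a proper subgraph of Betti number $2$. In that case the image would itself be the core of a rank-two subgroup $K'$ containing $H$, with $K'\leq K$. Equal rank together with $K'$ being a subgraph forces $\Gamma(K')=\Gamma(K)$, since removing any core edge of a Betti-two core graph lowers the Betti number, or else leaves hanging trees that the image, being a core, does not use. Hence $f$ is onto the core, so $c_u\cup c_v$ covers every edge of $\Gamma(K)$.

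\textbf{Step 2: no edge is covered twice.} Cover the cycles $c_u$ and $c_v$ by windows of length $L$, i.e.\ the length-$L$ subwords of $\bar u^{\pm1},\bar v^{\pm1}$ at each cyclic position. If some edge $e$ of $\Gamma(K)$ is traversed twice (in either direction) by $c_u\cup c_v$, we want two distinct occurrences of one length-$L$ word. Counting gives the needed overlap: the total length is at least $12L$, while $\vol(K)\geq 3L$. The core $\Gamma(K)$ has Betti number two, so it is a theta, a barbell or a figure eight, and hence has at most $3$ maximal branch segments (topological edges) with vertices of degree $\geq3$ as endpoints. Some segment $\sigma$ therefore has length at least $L$. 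Every traversal of $\sigma$ reads the same word, or its inverse, of length $\geq L$. Thus if $\sigma$ is traversed at two distinct cyclic positions, its first $L$ letters occur twice, contradicting $L$-cleanness. So each segment of length $\geq L$ is traversed exactly once in total by $c_u$ and $c_v$. Every cycle in a theta, barbell or figure-eight passes through a long segment only a bounded number of times per loop. Combined with the fact that $c_u$ must close up, this shows that $c_u$ and $c_v$ are each embedded cycles, traversed exactly once, and that they share no long segment.

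\textbf{Step 3: conclusion.} Since the union of $c_u$ and $c_v$ covers the Betti-two graph with each long segment used once, the cycles $c_u$ and $c_v$ form a basis of $\pi_1\Gamma(K)$. Short segments (length $<L$) may be shared, which is consistent with a theta or barbell shape. A pair of closed paths reading a basis of $\pi_1$ generates $K$, so $H=K$, contradicting $H\neq K$.

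The main obstacle is Step 2: ruling out a cycle that winds around a long segment several times, or traverses a short segment repeatedly. Each traversal of a segment of length $\geq L$ produces a repeated $L$-window, so this is where the full condition that $\bar u,\bar v$ have length at least $6L$ enters. The plan for this step is to show that, if $\vol(K)\geq 3L$, a cyclic word of length $\geq 6L$ must pass through some segment of length $\geq L$ twice unless it is an embedded basis cycle; the pigeonhole argument over at most $3$ segments has to be made precise here.
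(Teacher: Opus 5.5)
\textbf{There is a genuine gap, and it is in Step 3. It cannot be closed, because the step is false.}

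Throughout you reason with the cyclic reductions $c_u,c_v$. But $u$ and $v$ are based loops of the form $\alpha_u c_u\alpha_u^{-1}$ and $\alpha_v c_v\alpha_v^{-1}$, and $L$-cleanness says nothing about the connecting segments $\alpha_u,\alpha_v$.

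This already matters in Step 1. The filling statement (Lemma~\ref{lem:core-image-fills}) concerns the based loops, not $c_u\cup c_v$. The bar of a barbell can be covered by the $\alpha$'s alone, and a separating segment is never crossed exactly once by closed cycles, which contradicts the ``exactly once'' claim in Step 2.

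The decisive problem is Step 3. The inference ``$c_u,c_v$ read a basis of $\pi_1\Gamma(K)$, hence $u,v$ generate $K$'' confuses conjugacy classes with elements: elements conjugate to the members of a basis need not generate. Here is a concrete counterexample.
\begin{itemize}
\item Let $E,G$ be cyclically reduced words with $E$ beginning and ending with $a$, $G$ beginning and ending with $b$, and $|E|,|G|\ge 6L$.
\item Require $(E,G)$ to be $L$-clean. Such pairs exist for all large $L$ by the block-repetition estimates of Section~\ref{S.random-words}.
\item Then $K=\langle E,G\rangle$ has a folded figure-eight core, so $\rk K=2$ and $\vol(K)\ge 12L$.
\item Put $u=E$ and $v=(GE)\,G\,(GE)^{-1}$. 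The word $GEGE^{-1}G^{-1}$ is reduced, so the cyclic reductions are $E$ and $G$, and $(u,v)$ is $L$-clean.
\item Here $c_u,c_v$ are exactly the two embedded loops, each crossed once, as your Step 2 demands.
\item In the basis $x=E$, $y=G$ of $K$ one gets $H=\langle x,\,yxyx^{-1}y^{-1}\rangle$. Its Stallings graph over $\{x,y\}$ is a barbell with lobes $x,y$ and bar $yx$, so $y\notin H$ and $H\neq K$.
\item Meanwhile $H\le_{\alg}K$, because $\rk K=2$ and $H$ is non-cyclic.
\end{itemize}
So the Proposition as stated is false.

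The paper's proof makes the same inference, so it does not supply a repair. After rebasing, it asserts $v\in\pi_1(Y,p)$ because the \emph{cyclic} path of $v$ lies in $Y$; in the example above, $v$ crosses the long loop $E$ twice inside its conjugating part. In the barbell case it deduces generation from $u,v$ being conjugate to $c_1^{\pm1},c_2^{\pm1}$.

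Apart from this shared flaw, the paper's route is more economical than yours. Rather than trying to prove $c_u,c_v$ embedded (your unfinished Step 2), it fixes one topological edge $e$ of length $\ge L$ and shows it is crossed exactly once and is nonseparating. It then applies Lemma~\ref{lem:rank-one-periodicity} to the rank-one graph $X\setminus\operatorname{int}(e)$ and writes $u=sv^n$.

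To make either argument correct you need an extra hypothesis controlling $\alpha_u,\alpha_v$. One option is to require that the conjugating parts of $u$ and $v$ have length $<L$. This holds on the clean event for random words with $L=\lfloor n/7\rfloor$ and $n$ large. You would then have to track the based loops, not only the cycles, using that a segment of length $<L$ cannot run through a topological edge of length $\ge L$.
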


We first record two elementary facts about rank-two core graphs. 

\begin{definition}
Let $X$ be a finite connected core graph of rank two.  A \emph{topological
edge} of $X$ is a maximal edge-path whose internal vertices have degree two and
whose endpoints have degree different from two, with the usual convention for
loops.  Its length is the number of ordinary unoriented edges in that path.
\end{definition}

\begin{lemma}\label{lem:rank-two-core-types}
Let \(X\) be a finite connected core graph with first Betti number \(b_1(X)=2\).
After suppressing degree-two vertices, \(X\) is one of the following three
topological types:
\begin{enumerate}
    \item a figure-eight graph;
    \item a theta graph;
    \item a barbell graph.
\end{enumerate}
In particular, \(X\) has at most three topological edges.
\end{lemma}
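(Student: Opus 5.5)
The plan is to reduce to a cubic multigraph and then enumerate by hand. The core graph \(X\) is finite, connected, has first Betti number two, and has no vertices of degree one (in a core graph every edge lies on a reduced closed path, so leaves cannot occur). \(X\) is not a cycle, since \(b_1(X)=2\). We therefore suppress all degree-two vertices by merging the two edges at each such vertex into a single edge, which yields a multigraph \(Y\) homotopy equivalent to \(X\) in which every vertex has degree at least three. Topological edges of \(X\) correspond bijectively to edges of \(Y\). A degenerate case must be checked first: if every vertex of \(X\) has degree two, then \(X\) is a circle, contradicting \(b_1=2\). So \(Y\) has at least one vertex and is a genuine graph. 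Suppressing a degree-two vertex removes one vertex and one edge, so Euler characteristic is preserved and \(b_1(Y)=2\).

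The next step is an Euler characteristic count. Write \(V\) and \(E\) for the numbers of vertices and edges of \(Y\). Then \(E-V=b_1(Y)-1=1\). The degree-sum formula together with minimum degree at least three gives \(2E=\sum_v \deg v\geq 3V\). Combining these, \(2(V+1)\geq 3V\), so \(V\leq 2\) and hence \(E=V+1\leq 3\). This already proves the final assertion that there are at most three topological edges.

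The last step enumerates the possibilities, which I expect to be the main (if mild) obstacle, since self-loops and multiple edges must be handled carefully.

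\emph{Case \(V=1\).} Then \(E=2\), the single vertex has degree four, and both edges are loops. This is the figure-eight.

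\emph{Case \(V=2\).} Then \(E=3\) and the degree sum is \(6\), so both vertices have degree exactly three. Let \(k\) be the number of edges joining the two distinct vertices. Connectivity forces \(k\geq 1\), and each vertex has degree three. If \(k=3\), there are no loops and \(Y\) is the theta graph. If \(k=1\), each vertex carries one loop, giving the barbell. The value \(k=2\) is impossible, because each vertex would then need one further half-edge, and a loop contributes two. Likewise \(k=0\) contradicts connectivity.

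This exhausts the cases and yields exactly the three topological types.
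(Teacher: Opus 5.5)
Your proof is correct and follows essentially the same route as the paper. Both suppress degree-two vertices, combine $|E|=|V|+1$ with minimum degree three (the paper writes this as $\sum_x(\deg x-2)=2$) to get at most two branch vertices, and then enumerate the cases. Your explicit exclusion of the all-degree-two circle case and of $k=2$ by parity is slightly more careful than the paper's write-up.
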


\begin{proof}
Let \(\bar X\) be the graph obtained from \(X\) by suppressing degree-two
vertices.  Since \(X\) is a core graph, every vertex of \(\bar X\) has
degree at least three, except in the case of a single vertex with loop-edges.
Since \(b_1(\bar X)=2\), we have
\[
        |E(\bar X)|-|V(\bar X)|+1=2,
\]
and hence \(|E(\bar X)|=|V(\bar X)|+1\).  Therefore
\[
        \sum_{x\in V(\bar X)}(\deg(x)-2)
        =
        2|E(\bar X)|-2|V(\bar X)|
        =
        2.
\]
There are only three possibilities: one vertex with two loop-edges, two vertices
joined by three edges, or two loop-vertices joined by a bridge.  These are,
respectively, the figure-eight, theta, and barbell types.
\end{proof}

\begin{lemma}\label{lem:core-image-fills}
Let \(H\leq_{\mathrm{alg}} K\) with \(\operatorname{rk} H=\operatorname{rk} K=2\).
After conjugating \(H\leq K\), if necessary, assume that the basepoint of the
pointed Stallings graph of \(K\) lies in \(X=\Gamma(K)\).  Let \(Y\subseteq X\)
be the core of the image of the pointed Stallings graph of \(H\) in \(X\).  Then
\(Y=X\).
\end{lemma}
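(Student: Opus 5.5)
We argue by contradiction and show that if \(Y\neq X\), then \(H\) lies in a proper free factor of \(K\), against \(H\leq_{\mathrm{alg}}K\). The pointed Stallings graph \(\widehat\Gamma(H)\) maps by a label-preserving immersion (after folding) into \(\widehat\Gamma(K)\). Since the basepoint of \(K\) lies in \(X\), this immersion lands in \(X\), and its image is a connected subgraph \(Z\subseteq X\) containing the basepoint. Every element of \(H\) labels a reduced closed path at the basepoint inside \(Z\). Hence \(H\leq \pi_1(Z,*)\leq \pi_1(X,*)=K\). By definition \(Y\) is the core of \(Z\) (with the stem to the basepoint remembered), so also \(H\leq \pi_1(Z,*)\), with \(\pi_1(Z,*)\) conjugate to \(\pi_1(Y)\).

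The key step is the standard fact that for a connected subgraph \(Z\) of a graph \(X\), \(\pi_1(Z,*)\) is a free factor of \(\pi_1(X,*)\). To see this, choose a maximal tree \(T_Z\) of \(Z\) and extend it to a maximal tree \(T\) of \(X\). The free basis of \(\pi_1(X,*)\) given by the edges outside \(T\) then contains the basis given by the edges of \(Z\) outside \(T_Z\). So if \(\pi_1(Z,*)\neq K\), it is a proper free factor containing \(H\), which is a contradiction. Hence \(\pi_1(Z,*)=K\), and in particular \(b_1(Z)=2\).

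It remains to pass from \(\pi_1(Z)=\pi_1(X)\) to \(Y=X\); I expect this to be the main obstacle. Suppose \(Y\subsetneq X\), so some edge \(e\) of \(X\) is not in \(Y\). Since \(X\) is a core graph, \(e\) lies on a reduced closed path, and hence on an embedded cycle, or on a bridge between cycles as in the barbell case of Lemma~\ref{lem:rank-two-core-types}. In either case \(e\) lies in the core of \(X\), so there is a reduced closed path in \(X\) through the basepoint traversing \(e\). Its label is an element of \(K\). Because \(X\) is folded, this label is read by a unique path from the basepoint, which uses \(e\). On the other hand, every element of \(\pi_1(Z,*)\) is read by a reduced path inside \(Z\), and \(Z\) contains only core edges of \(Y\) together with tree edges (the stem) leading to the basepoint. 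If \(e\) were a stem edge of \(Z\), it would be traversed back and forth by every loop, so it could not be the edge that carries the cycle. Thus the label is not in \(\pi_1(Z,*)\), so \(\pi_1(Z,*)\neq K\), a contradiction.

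Alternatively, the last step can be phrased via Betti numbers. \(Y\) is a core graph with \(b_1(Y)=b_1(Z)=2=b_1(X)\), and a proper core subgraph of a connected core graph has strictly smaller first Betti number. Indeed, adding back a missing edge of \(X\) that touches \(Y\) either closes a new cycle or starts a path that must eventually return to \(Y\), since \(X\) has no leaves; either way \(b_1\) increases. This gives \(Y=X\).
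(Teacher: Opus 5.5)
Your proposal is correct, and its closing ``Alternatively'' paragraph is essentially the paper's own argument: a proper connected core subgraph $Y$ of the core graph $X$ has $b_1(Y)<b_1(X)$, because the part of $X$ outside $Y$ cannot be a tree hanging off $Y$.

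Where you differ is in how you get $b_1=2$. You use $H\leq_{\mathrm{alg}}K$, together with the fact that $\pi_1$ of a connected subgraph is a free factor, to conclude $\pi_1(Z,*)=K$. The paper only notes that the subgroup carried by $Y$ contains the non-cyclic group $H$, so it has rank at least two, while $b_1(Y)\leq b_1(X)=2$. So the paper's proof never uses algebraicity, only $\operatorname{rk}H=\operatorname{rk}K=2$. The same is true of your route: $\pi_1(Z,*)$ contains $H$, so it is a free factor of $K$ of rank at least $2=\operatorname{rk}K$, and is therefore $K$ itself.

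The stronger conclusion $\pi_1(Z,*)=K$ is what makes the folded-uniqueness finish in your third paragraph available. As written, though, that paragraph is loose in the stem-edge case. If $e$ is a separating edge of $X$ (the bar of a barbell), every closed path in $X$ also crosses it an even number of times. So ``traversed back and forth'' does not distinguish $Z$ from $X$ there.

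The clean repair is to use only the case $e\notin Z$. That argument shows every edge of $X$ lies in $Z$, so $Z=X$. Since $X$ is already a core graph, its core $Y$ equals $X$.
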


\begin{proof}
Let \(y\in Y\) be the image of the basepoint of the pointed Stallings graph of
\(H\).  The pointed core graph \((Y,y)\) represents an intermediate subgroup
\(J\) with
$
        H\leq J\leq K .
$
Since \(H\) has rank two, \(J\) has rank at least two.  Since \(Y\subseteq X\)
and \(X\) has first Betti number two, \(Y\) has first Betti number at most two.
Thus \(Y\) has first Betti number exactly two.

If \(Y\neq X\), then \(Y\) is a proper connected core subgraph of the connected
core graph \(X\).  Every component of the closure of \(X\setminus Y\) must
contribute a cycle relative to \(Y\); otherwise it would be a tree attached to
\(Y\) at a single vertex and would contain an edge not lying on any reduced
closed path, contrary to the fact that \(X\) is a core graph.  Hence adjoining
\(X\setminus Y\) to \(Y\) strictly increases the first Betti number.  This
contradicts \(b_1(Y)=b_1(X)=2\).  Therefore \(Y=X\).
\end{proof}

\begin{lemma} \label{lem:edge-traversal}
Let $(u,v)$ be $L$-clean and let $X$ be a folded labelled core graph in which
the cyclic reductions of $u$ and $v$ are read as reduced cycles.  Let $e$
be a topological edge of $X$ of length at least $L$.  If the cycles
corresponding to $u$ and $v$ traverse $e$ at least twice in total, counting
both orientations and both cyclic words, then $(u,v)$ is not $L$-clean.
\end{lemma}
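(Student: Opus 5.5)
Since a topological edge has no branching in its interior, the plan is to read off a repeated length-$L$ word inside it. Let $e$ be a topological edge of length $\ell\geq L$, with label a reduced word $\sigma$ of length $\ell$ read in a fixed orientation. The interior vertices of $e$ have degree two. Hence any reduced closed path in the folded graph $X$ that enters $e$ at one end cannot turn back (the path is reduced) and cannot leave early (there is no branching). So it must run through all of $e$ and exit at the other end; the only exception is when the path starts inside $e$.

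For cyclic words the exception does not occur, because we may choose the starting point of the cyclic word anywhere. Each traversal of $e$ by the cycle of $u$ or $v$ therefore produces a cyclic occurrence of $\sigma$, or of $\sigma^{-1}$ if $e$ is traversed backwards, in the cyclic reduction of $u$ or of $v$. Correspondingly, $\sigma$ itself occurs in one of the four cyclic words coming from $u^{\pm1}$ and $v^{\pm1}$. In particular the prefix $\tau$ of $\sigma$ of length $L$ occurs there.

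Now suppose $e$ is traversed at least twice in total. Each traversal gives an occurrence of $\tau$ in one of the four cyclic words: use $u$ or $v$ if the traversal follows the fixed orientation, and $u^{-1}$ or $v^{-1}$ if it goes backwards. It remains to check that two distinct traversals give two distinct cyclic positions, that is, distinct (word, position) pairs.

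\begin{itemize}
\item If the two traversals lie in different cyclic words, or in the same word with opposite orientations, the positions land in different words among $u^{\pm1},v^{\pm1}$. These count as distinct positions.
\item If they are two traversals of the same cycle, say of $u$, in the same direction, then they start at different positions of the cyclic word, since distinct traversals are different subpaths of the cycle. So the two occurrences of $\tau$ are at distinct positions of the same cyclic word.
\end{itemize}

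The main subtlety is the borderline case where $e$ is a loop-type topological edge, or where $u$ wraps around $e$ in a way that makes occurrences overlap. Overlapping occurrences at distinct positions are still distinct positions, so condition (2) is violated regardless.

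A further subtlety is whether the cyclic word of $u$ could coincide with that of $u^{-1}$ at a matching position. A reduced word $\tau$ occurring in $u$ at position $i$ and in $u^{-1}$ is exactly what the definition forbids as two distinct positions in the four cyclic words. So we get a word of length $L$ occurring twice, contradicting $L$-cleanness.
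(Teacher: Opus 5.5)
Your proposal is correct and is essentially the paper's proof: the paper takes an arbitrary length-$L$ subpath $I\subset e$ (you take the prefix $\tau$ of the label), notes that every traversal of $e$ reads $I$ in one of the two orientations, and then splits into the same cases you use. Same orientation gives two distinct positions among the cyclic words of $u,v$; opposite orientations give one occurrence in $u$ or $v$ and one in $u^{-1}$ or $v^{-1}$. Your extra remarks, that reduced cycles must run through all of $e$ and that distinct traversals give distinct positions, spell out what the paper leaves implicit.
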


\begin{proof}
Choose a subpath $I\subset e$ of length $L$.  Every traversal of $e$ contains a
traversal of $I$, possibly with the opposite orientation.  If two traversals of
$e$ have the
same orientation, then the label of $I$ occurs twice as a cyclic length-$L$
subword among $u$ and $v$.  If the orientations are opposite, then the same
label occurs once in one of $u,v$ and once in one of $u^{-1},v^{-1}$.  In either
case there are two distinct cyclic occurrences of the same reduced word of
length $L$, contradicting $L$-cleanness.
\end{proof}

\begin{lemma}\label{lem:rank-one-periodicity}
Let \(c\) be a nontrivial cyclically reduced word and let \(m\in\mathbb Z\) with
\(|m|\geq 2\).  Let \(w\) be the cyclic word represented by the cyclically
reduced word \(c^m\) if \(m>0\), and by \((c^{-1})^{|m|}\) if \(m<0\).  If
$
        |w|=|m|\,|c|\geq L,
$
then \(w\) contains two distinct cyclic occurrences of the same reduced word of
length \(L\).
\end{lemma}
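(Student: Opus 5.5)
The plan is to exhibit the two occurrences explicitly, using the periodicity of $w$ with period $|c|$. Replacing $c$ by $c^{-1}$ if $m<0$, we may assume $m\geq 2$ and $w$ is the cyclic word $c^m$. Since $c$ is cyclically reduced and nontrivial, $c^m$ is cyclically reduced of length $N=m|c|$, so $w$ is a genuine cyclic word of length $N\geq L$ with $N$ distinct cyclic positions, indexed by $\mathbb Z/N\mathbb Z$.

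Write $w_i$ for the letter at position $i\in\mathbb Z/N\mathbb Z$. Because $w$ is the cyclic word $c^m$, we have $w_{i+|c|}=w_i$ for all $i$. For each position $i$, the length-$L$ cyclic subword starting at $i$ is $s_i=w_iw_{i+1}\cdots w_{i+L-1}$, with indices mod $N$. This is well defined as a word because $L\leq N$; if $L=N$ it is the full cyclic rotation, which is still fine. It is reduced because $w$ is cyclically reduced. Periodicity gives $s_i=s_{i+|c|}$ letter by letter.

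It remains to check that positions $0$ and $|c|$ are distinct in $\mathbb Z/N\mathbb Z$. This holds because $0<|c|<m|c|=N$, using $m\geq 2$. Hence $s_0=s_{|c|}$ occurs at two distinct cyclic positions of $w$, which is the claim.

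The only point needing care is what ``distinct cyclic positions'' means when $L$ is close to $N$, or when the occurrences overlap. Since the definition of $L$-cleanness counts starting positions on the cycle, overlapping occurrences are allowed, and the argument above needs no condition beyond $L\leq N$. I expect no real obstacle here; the role of the hypothesis $|m|\geq 2$ is exactly to make the shift by $|c|$ nontrivial modulo $N$.
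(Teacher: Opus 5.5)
Your proposal is correct and is essentially the paper's proof: both use that $w$ is periodic with period $|c|$, so the length-$L$ cyclic subwords starting at $i$ and $i+|c|$ coincide, and $|m|\geq 2$ makes these two positions distinct modulo $|m|\,|c|$. Your explicit checks that $c^m$ stays cyclically reduced, so each length-$L$ subword is reduced, and that the case $L=|w|$ causes no trouble, are details the paper leaves implicit.
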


\begin{proof}
Put \(d=|c|\).  The cyclic word \(w\) is periodic with period \(d\).  Hence the
length-\(L\) cyclic subword starting at a given position depends only on that
position modulo \(d\).

Since \(|w|=|m|d\geq L\), length-\(L\) cyclic subwords are defined.  Because
\(|m|\geq 2\), the cyclic word \(w\) has at least two distinct positions in each
congruence class modulo \(d\).  Choose any cyclic starting position \(i\).  The
starting position \(i+d\), taken modulo \(|w|\), is distinct from \(i\), and by
periodicity the length-\(L\) cyclic subword starting at \(i+d\) is equal to the
one starting at \(i\).  Thus \(w\) has two distinct cyclic occurrences of the
same reduced word of length \(L\).
\end{proof}

\begin{proof}[Proof of Proposition \ref{P.volume}]
Let \(X=\Gamma(K)\).  By the stem-removing reduction above, after replacing
\(H\leq K\) by a common conjugate, we may assume that the basepoint of \(K\)
lies in \(X\).  The elements \(u\) and \(v\) then determine based closed paths
in \(X\).

Let \(Y\subseteq X\) be the core of the image of the pointed Stallings graph
\(\Gamma(H)\) in \(X\).  By Lemma~2.6, we have \(Y=X\).  Thus the based closed
paths labelled by \(u\) and \(v\) fill the whole core graph \(X\).

Suppose that $X$ has a topological edge $e$ of length at least $L$.  We first
observe that $e$ is traversed by at least one of the cyclic reductions of $u$ and
$v$.  Indeed, if not, then every occurrence of $e$ in the based loops labelled by
$u$ and $v$ is contained in the initial-final cancellation of one of these loops.
Since the based loops fill $X$, this can only occur in the barbell case: the edge
$e$ is the bar, and the cyclic reductions of $u$ and $v$ lie in the two rank-one
lobes.  Let $c_1$ and $c_2$ be cyclically reduced generators of the rank-one
subgroups represented by the two lobes.  Thus, up to interchanging $u$ and $v$,
their cyclic reductions are powers $c_1^m$ and $c_2^n$.  By
Lemma~\ref{lem:rank-one-periodicity} and $L$-cleanness, the cyclic reductions of
$u$ and $v$ have length at least $6L$, so $|m| = |n| = 1$.  Since the bar of a
barbell is a separating tree-bridge, it contributes nothing to $\pi_1$, and
$\pi_1(X)$ is freely generated by (conjugates into a common basepoint of) the two
lobe generators $c_1$ and $c_2$.  As $|m| = |n| = 1$, the elements $u$ and $v$ are
conjugate to $c_1^{\pm1}$ and $c_2^{\pm1}$, so they generate all of $\pi_1(X)$.
Hence $H = K$, contrary to $H \ne K$.

Therefore \(e\) is traversed by at least one of the cyclic reductions of \(u\)
and \(v\).  If it is traversed at least twice in total, counting both orientations
and both cyclic words, then Lemma~2.7 contradicts \(L\)-cleanness.  Hence \(e\)
is traversed exactly once in total by the cyclic reductions of \(u\) and \(v\).

If \(e\) is separating, then every cyclic closed path in \(X\) traverses \(e\)
algebraically zero times and, in particular, an even number of times in total.
Thus \(e\) cannot be traversed exactly once by the union of the two cyclic paths.
Hence \(e\) is nonseparating.

Remove the interior of $e$ from $X$ and call the resulting connected graph
$Y=X\setminus \operatorname{int}(e)$. 
Since $X$ has rank two and $e$ is nonseparating, $Y$ has rank one.  Exactly one
of the two cyclic paths, say the path labelled by $u$, traverses $e$ once; the
other path, labelled by $v$, is contained in $Y$.

Let $c$ be a cyclically reduced generator of the rank-one subgroup represented
by the core of $Y$.  Since the cyclic path labelled by $v$ is contained in a
rank-one graph, its cyclic label is a power of $c$: $v \sim_{\mathrm{cyc}} c^m$
for some nonzero integer $m$.
If \(|m|\geq 2\), then Lemma~\ref{lem:rank-one-periodicity} contradicts
\(L\)-cleanness, because the cyclic reduction of \(v\) has length at least
\(6L\).  Therefore \(|m|=1\).

Let $p$ be the initial vertex of the unique traversal of $e$ along the cyclic
path labelled by $u$.  Simultaneously conjugating $H$ and $K$ by the label of a
path from the original basepoint to $p$ preserves all hypotheses and the
strictness of the inclusion.  It replaces $u$ and $v$ by conjugates whose
cyclic reductions are cyclically conjugate to the original ones, so
$L$-cleanness is preserved.  Thus we may rebase $X$ at $p$ and still denote the
resulting subgroup pair by $H\leq_{\alg}K$ and the resulting generators by
$u,v$.

After this rebasing, the loop representing $u$ starts with the unique traversal
of $e$, so $u=er$,
where $r$ is a path in $Y$ from the terminal vertex $q$ of $e$ back to $p$.
Also $p\in Y$, and since the cyclic path labelled by $v$ is contained in $Y$,
the element $v$ lies in $\pi_1(Y,p)$.  Because $\pi_1(Y,p)$ has rank one and the
cyclic label of $v$ is $c^{\pm1}$, the element $v$ generates $\pi_1(Y,p)$.

Choose any path $\beta$ in $Y$ from $p$ to $q$.  Then
$s=e\beta^{-1}$ is the stable loop obtained by adjoining the nonseparating edge
$e$ to $Y$, so $\pi_1(X,p)$ is generated by $\pi_1(Y,p)$ together with $s$.
Since $\beta r$ is a loop in $Y$ based at $p$, there exists $n\in\mathbb Z$
such that $\beta r=v^n$.  Hence $u=er=(e\beta^{-1})(\beta r)=s v^n$.
Therefore $s=u v^{-n}$ lies in $H$.  Since $v$ already generates
$\pi_1(Y,p)$, we obtain $H=\pi_1(X,p)=K$,
contrary to the assumption that $H \not =K$.

Thus no topological edge of $X$ has length at least $L$.  Since a rank-two
core graph has at most three topological edges, we obtain $\vol(K)<3L$.
\end{proof}

\section{Uniform cogrowth gap for proper rank-two subgroups}\label{S.cogrowth}

We now prove the uniform probabilistic input.
Let $S_n$ denote the sphere of radius $n$ in $F_2$ with respect to the basis
$\{a,b\}$.  Thus
$
        |S_n|=4\cdot 3^{n-1}
$
for $n\geq 1$.

If $K\leq F_2$ is finitely generated, then a reduced word belongs to $K$ if and
only if its labelled path in the pointed Stallings graph
$\widehat\Gamma(K)$ returns to the base vertex.  Write $X=\Gamma(K)$ for the
core of $K$, let $h(K)$ be the stem length of $\widehat\Gamma(K)$, let
$x\in X$ be the attachment point of the stem, and let $\gamma$ be the reduced
label of the stem from the basepoint to $x$.  A reduced based loop in
$\widehat\Gamma(K)$ is forced to traverse the stem along $\gamma$ at the
beginning and along $\gamma^{-1}$ at the end, and otherwise stays inside $X$.
Thus $K\cap S_n=\varnothing$ for $n<2h(K)$, and for $n\geq 2h(K)$ every element
of $K\cap S_n$ has the form
$
        \gamma w\gamma^{-1},
$
where $w$ labels a non-backtracking closed path of length $n-2h(K)$ in $X$
based at $x$.

For a finite folded core graph $X$, let $B_X$ be its non-backtracking adjacency
matrix on oriented edges.  Thus
$
        (B_X)_{ef}=1
$
if the terminal vertex of $e$ is the initial vertex of $f$ and
$
        f\neq \bar e,
$
and otherwise
$
        (B_X)_{ef}=0.
$
Let
$
        \rho(X)=\rho(B_X)
$
be its Perron--Frobenius spectral radius.

For the uniform estimates it is more convenient to collapse the degree-two
chains.  Let
$
        \bar X
$
be the graph obtained from $X$ by suppressing degree-two vertices, and let
$\Omega_X$ be the set of its oriented topological edges.  For
$
        e\in \Omega_X
$
let $\ell(e)$ denote the length of the corresponding topological edge in $X$.
Define the weighted transfer matrix
\[
        M_X(z)=\bigl((M_X(z))_{ef}\bigr)_{e,f\in \Omega_X}
\]
by declaring
\[
        (M_X(z))_{ef}=
        \begin{cases}
                z^{\ell(f)} & \text{if }e\to f\text{ is an allowed non-backtracking turn},\\
                0 & \text{otherwise.}
        \end{cases}
\]
Here
$
        e\to f
$
means that the terminal vertex of $e$ is the initial vertex of $f$ and
$
        f\neq \bar e.
$
Thus a topological itinerary
$
        e_1,\dots,e_m
$
contributes the monomial
$
        z^{\ell(e_1)+\cdots+\ell(e_m)}.
$

\begin{lemma}\label{L.sharp-cogrowth}
Let
\[
        \rho_0=
        \frac{1+\sqrt[3]{46+6\sqrt{57}}+\sqrt[3]{46-6\sqrt{57}}}{3}.
\]
For every proper rank-two subgroup $K<F_2$, one has
\[
        \rho(\Gamma(K))\leq \rho_0.
\]
Moreover, equality can occur only when the topological core of $K$ is a
figure-eight with loop lengths $1$ and $2$.
\end{lemma}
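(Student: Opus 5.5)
The plan is to reduce the statement to one explicit computation. I would use two monotonicity facts: the row-sum bound $\rho(B_X)\le \max_x(\deg x-1)$, and a comparison of the weighted transfer matrices $M_X(z)$ for graphs of the same topological type but with different edge lengths.

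\emph{Step 1 (types of maximal degree three).} $B_X$ is a nonnegative matrix. The row of an oriented edge $e$ has exactly $\deg(t(e))-1$ ones, so $\rho(B_X)\le \max_x(\deg x-1)$. By Lemma~\ref{lem:rank-two-core-types}, a theta core or a barbell core has only vertices of degree $2$ or $3$, so $\rho(\Gamma(K))\le 2$. Since $\rho_0$ is the real root of $\rho^3-\rho^2-\rho-3$ and the value of this polynomial at $2$ is $-1<0$, we have $\rho_0>2$. Hence these two types satisfy the bound strictly, with no further work.

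\emph{Step 2 (figure-eight, reduction to lengths $(1,2)$).} Let $X$ be a figure-eight with loop lengths $p\le q$.
\begin{itemize}
\item If $p=q=1$, folding forces the two loops to carry the distinct labels $a$ and $b$. Then $X$ is the rose and $K=F_2$, which is excluded. So $q\ge 2$.
\item I would use the standard identity $\det(I-zB_X)=\det(I-M_X(z))$, obtained by eliminating the degree-two vertices along topological edges. It shows that $1/\rho(X)$ is the smallest positive $z$ at which the Perron root $\lambda_X(z)$ of $M_X(z)$ equals $1$.
\item For $0<z<1$, each entry $z^{\ell(f)}$ is non-increasing in $\ell(f)$. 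So replacing $(p,q)$ by $(1,2)$ increases $M_X(z)$ entrywise, and $\lambda_X(z)$ can only increase.
\item $M_X(z)$ is irreducible for a rank-two core. Hence the increase is strict whenever $(p,q)\ne(1,2)$. The crossing point $z^\ast$ with $\lambda=1$ therefore moves strictly left, and $\rho$ strictly increases.
\end{itemize}
This gives $\rho(X)\le\rho(X_{1,2})$, with equality only for loop lengths $(1,2)$.

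\emph{Step 3 (the computation).} For $X_{1,2}$, I would write down the $4\times4$ matrix $M(z)$ on the oriented loops $\alpha^{\pm},\beta^{\pm}$, with $\ell(\alpha)=1$ and $\ell(\beta)=2$. From each oriented loop one may continue into any of the three others except its own reverse. I would then compute $\det(I-M(z))$. Using the symmetry under reversing orientation, I would split it into the symmetric and antisymmetric $2\times2$ blocks. The expected outcome is that, after substituting $z=1/\rho$, the relevant factor is $\rho^3-\rho^2-\rho-3$, and that the other factors have smaller positive roots. Identifying $\rho_0$ with Cardano's expression for the real root is then routine.

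The main obstacle I expect is Step 3: getting the determinant right, and making sure the Perron root comes from the cubic factor and not from a spurious factor. As a cross-check I would compute $\rho(B_X)$ directly from the $6\times6$ non-backtracking matrix of $X_{1,2}$, which has $3$ vertices and $3$ edges. A secondary point is the strictness claim in Step 2, which needs irreducibility of $M_X(z)$. This holds because in a figure-eight every oriented loop reaches every other through non-backtracking turns.
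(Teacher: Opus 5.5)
Your proposal is correct and follows the paper's route. You use the row-sum bound $\rho(B_X)\le 2<\rho_0$ for theta and barbell cores, then entrywise monotonicity of the weighted transfer matrix to reduce figure-eights to loop lengths $(1,2)$, and finally the symmetric $2\times 2$ block giving $(1-z)(1-z^2)-4z^3=0$, i.e.\ $\rho^3-\rho^2-\rho-3=0$; the antisymmetric block contributes only $(1-z)(1-z^2)$. Your irreducibility argument supplies the strictness for other loop lengths, which the paper only asserts.

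One small slip in your cross-check: $X_{1,2}$ has $2$ vertices and $3$ edges, not $3$ vertices and $3$ edges, which would give Betti number one. The $6\times 6$ non-backtracking matrix you mention is nonetheless the right one.
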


\begin{proof}
Let $X=\Gamma(K)$, and suppress degree-two vertices.  By the rank-two
classification lemma, the resulting topological graph is a figure-eight, a
theta graph, or a barbell.

Assume first that $\bar X$ is a figure-eight with loop lengths $a,b$.
After identifying the two orientations of each loop, the weighted $4 \times 4$ transfer
matrix reduces to
\[
        M_{a,b}(r)=
        \begin{pmatrix}
                r^a & 2r^b \\
                2r^a & r^b
        \end{pmatrix}.
\]
The critical value $r=\rho^{-1}$ is determined by
$
        \det\bigl(I-M_{a,b}(r)\bigr)=0,
$
that is,
$
        (1-r^a)(1-r^b)-4r^{a+b}=0.
$
If $a=b=1$, then the core is the rose and represents $F_2$, contrary to
$K<F_2$.  Hence, after relabelling, either $a=1,b\geq 2$, or else $a,b\geq 2$.
Since the entries of $M_{a,b}(r)$ decrease as $a,b$ increase, the maximal proper
figure-eight case is $(a,b)=(1,2)$.  In this case the critical equation is
\[
        (1-r)(1-r^2)-4r^3=0,
\]
or equivalently
$
        1-r-r^2-3r^3=0.
$
Writing $\rho=r^{-1}$, this becomes
$
        \rho^3-\rho^2-\rho-3=0.
$
Thus the maximal figure-eight value is $\rho_0$ using Cardano's formula.

If $\bar X$ is a theta graph, then every oriented ordinary edge has at
most two allowed non-backtracking successors.  Hence every row sum of the
non-backtracking adjacency matrix $B_X$ is at most $2$, so
$
        \rho(X)\leq 2<\rho_0.
$
The same row-sum bound holds for a barbell graph, so again
$
        \rho(X)\leq 2<\rho_0.
$

Therefore $\rho(\Gamma(K))\leq \rho_0$ for every proper rank-two subgroup
$K<F_2$.  Equality can occur only in the figure-eight case with loop lengths
$1$ and $2$.
\end{proof}

\begin{lemma}\label{L.return}
There is a constant $C_0>0$ such that, for every proper rank-two subgroup
$K<F_2$, if $h(K)$ denotes the stem length of $\widehat\Gamma(K)$, then for
every $n\geq 1$,
\[
        \frac{|K\cap S_n|}{|S_n|}
        \leq C_0\left(\frac34\right)^n\left(\frac94\right)^{-2h(K)}.
\]
\end{lemma}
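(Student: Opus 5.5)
The plan is to count closed non-backtracking paths in the core and compare them with $|S_n|$. By the stem description at the start of this section, $K\cap S_n=\varnothing$ for $n<2h(K)$. For $n\ge 2h(K)$, $|K\cap S_n|$ is at most the number $N_m(X,x)$ of non-backtracking closed paths of length $m=n-2h(K)$ in $X=\Gamma(K)$ based at the attachment point $x$. Since $|S_n|=\tfrac43 3^{n}$, and
\[
\left(\tfrac34\right)^n\left(\tfrac94\right)^{-2h(K)}=3^{-n}\left(\tfrac94\right)^{n-2h(K)},
\]
it suffices to prove the uniform bound
\[
N_m(X,x)\le C\left(\tfrac94\right)^m .
\]
This bound must hold for all $m\ge0$, all proper rank-two cores $X$, and all base vertices $x$, with one constant $C$. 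I will get it by showing that the generating series $\sum_m N_m(X,x)z^m$, evaluated at $z=4/9$, is bounded uniformly in $X$ and $x$. Then $N_m(4/9)^m\le C$ for each $m$.

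To bound the generating series I use the weighted transfer matrix $M_X(z)$. A closed non-backtracking path from $x$ decomposes as an initial partial segment of the topological edge containing $x$ (or leaving $x$, if $x$ is a branch vertex), a topological itinerary $e_1,\dots,e_k$, and a final partial segment back to $x$. If the path never reaches a branch vertex, it cannot close up without backtracking, so this case contributes nothing. There are at most two initial directions and at most two final directions, and the partial segments carry weight $z^{(\cdot)}\le1$. Hence the series is bounded by an absolute constant times
\[
1+\sum_{e,f\in\Omega_X}\bigl((I-M_X(z))^{-1}\bigr)_{ef},
\]
at least once we know the Neumann series converges. Since $|\Omega_X|\le 6$ by Lemma~\ref{lem:rank-two-core-types}, it suffices to bound the entries of $(I-M_X(4/9))^{-1}=\sum_k M_X(4/9)^k$ uniformly.

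I split into the three topological types, as in the proof of Lemma~\ref{L.sharp-cogrowth}.
\begin{itemize}
\item \emph{Theta or barbell.} Every oriented topological edge has at most two allowed successors, and each has weight $z^{\ell}\le z$. So the row sums of $M_X(4/9)$ are at most $8/9$, and the Neumann series is bounded entrywise by $9$.
\item \emph{Figure-eight with loop lengths $a\le b$.} The case $a=b=1$ is the rose, which is excluded because $K$ is proper. The entries of $M_X(z)$ are monotone decreasing in $a$ and $b$, so $0\le M_{X}(4/9)\le M_{1,2}(4/9)$ entrywise, where $M_{1,2}$ is the $4\times4$ matrix with $(a,b)=(1,2)$. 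For nonnegative matrices with spectral radius less than $1$, entrywise domination passes to the resolvents: $\sum_k M_X^k\le\sum_k M_{1,2}^k$. So it is enough that $\rho(M_{1,2}(4/9))<1$.
\end{itemize}

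The remaining check, $\rho(M_{1,2}(4/9))<1$, is where the strict gap $\rho_0<9/4$ from Lemma~\ref{L.sharp-cogrowth} is used; this is the step I expect to be the main obstacle. It is equivalent to $4/9<\rho_0^{-1}$, that is, $9/4>\rho_0$. Evaluating, $(9/4)^3-(9/4)^2-9/4-3\approx1.08>0$, and $\rho_0$ is the largest real root of $\rho^3-\rho^2-\rho-3$, which is increasing beyond $\rho_0$. Hence $\rho_0<9/4$.

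Once this is in place, $C=\max\bigl(9,\ \|(I-M_{1,2}(4/9))^{-1}\|_{\text{entrywise}}\bigr)$ times the absolute combinatorial factors gives $C_0$. The only remaining care is bookkeeping: making sure the partial-edge and degenerate contributions are absorbed into constants independent of $X$, $x$ and $h(K)$.
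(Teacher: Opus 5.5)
Your proposal is correct and follows essentially the same route as the paper. You evaluate the closed-path generating series at $z=4/9$ through the transfer-matrix decomposition, dominate $M_X(4/9)$ by the maximal figure-eight $(1,2)$, theta and barbell matrices using the gap $\rho_0<9/4$, and then read off the coefficient bound. The only differences are minor: you absorb the pure windings into the empty-itinerary term instead of isolating them in a separate $g_X$, and you use nonnegativity of the coefficients ($N_m R^m\le F(R)$) where the paper invokes Cauchy's estimate.
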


\begin{proof}
Let $X=\Gamma(K)$, let $h=h(K)$, and let $x\in X$ be the attachment point of
the stem of $\widehat\Gamma(K)$.  Let
\[
        F_{X,x}(z)=\sum_{n\geq 1} c_n(X,x)z^n
\]
be the generating series for non-backtracking closed paths in $X$ based at $x$.
By the preceding discussion, the generating series
\[
        F_K(z)=\sum_{n\geq 1}|K\cap S_n|z^n
\]
for reduced words in $K$ satisfies
\[
        F_K(z)=z^{2h}F_{X,x}(z).
\]
We therefore estimate $F_{X,x}(z)$ uniformly.

The point $x$ is either a branch vertex
of $\bar X$ or lies in the interior of a unique topological edge.  In
either case there are only boundedly many possible initial partial segments
leaving $x$ and terminal partial segments returning to $x$.  Consequently
there are vectors $u_{X,x}(z),v_{X,x}(z)\in \mathbb C^{\Omega_X}$ whose entries are
finite sums of at most uniformly many monomials in $z$, and a function $g_X(z)$
such that
\[
        F_{X,x}(z)=g_X(z)+u_{X,x}(z)^\ast\bigl(I-M_X(z)\bigr)^{-1}v_{X,x}(z).
\]
Here $g_X(z)$ is either a polynomial with coefficients bounded by an absolute
constant, or else $x$ lies in the interior of a topological loop of length
$\ell$ and
\[
        g_X(z)=2\sum_{n\geq 1} z^{n\ell}=\frac{2z^\ell}{1-z^\ell}.
\]
Indeed, if $x$ does not lie in the interior of a topological loop, then a
based non-backtracking closed path that never reaches a branch vertex has only
finitely many possibilities, and these are recorded by a polynomial.  If $x$
does lie in the interior of a topological loop of length $\ell$, then the closed
non-backtracking paths that stay entirely in that loop are exactly the nonzero
windings around the loop in one of the two orientations, giving the displayed
geometric series.  All remaining based closed paths are obtained by choosing an
initial partial segment from $x$ to a branch vertex, following a
non-backtracking itinerary of oriented topological edges, and then taking a
terminal partial segment back to $x$.  This is encoded by the displayed
formula.

Set
$
\rho_*=\frac94,
$ and
$       R=\rho_*^{-1}=\frac49.$
By Lemma~\ref{L.sharp-cogrowth}, every proper rank-two core has spectral radius
at most $\rho_0<\rho_*$.  Therefore, at $R=4/9$, the weighted transfer matrices
are uniformly inside their disk of convergence.  More explicitly, for
$|z|\leq R$, the entrywise absolute value of $M_X(z)$ is dominated by one of the
finitely many maximal matrices corresponding to the three topological types: the
figure-eight of lengths $(1,2)$, the theta graph of lengths $(1,1,1)$, or the
barbell graph of lengths $(1,1,1)$.  In the theta and barbell cases the maximal
matrix has row sums at most $2R=8/9<1$.  In the figure-eight case the maximal
matrix is
\[
        \begin{pmatrix}
                R & 2R^2 \\
                2R & R^2
        \end{pmatrix},
\]
whose spectral radius is strictly smaller than $1$ because its critical value is
$\rho_0^{-1}>R$.  Hence the Neumann series
\[
        \bigl(I-M_X(z)\bigr)^{-1}=\sum_{j\geq 0} M_X(z)^j
\]
is uniformly bounded for all proper rank-two $K$ and all $|z|\leq R$.
In the polynomial case $g_X(z)$ is uniformly bounded there, and in the loop case
one has
\[
        |g_X(z)|\leq \frac{2R}{1-R}=\frac{8}{5}.
\]
Since each entry of $u_{X,x}(z)$ and $v_{X,x}(z)$ is a sum of at most uniformly many
monomials $z^d$ and $|z|\leq R<1$, these entries are also uniformly bounded
there,
the generating series $F_{X,x}(z)$ is bounded by an absolute constant on that disk,
independently of $K$.

Therefore $F_K(z)=z^{2h}F_{X,x}(z)$ is bounded by $C_1R^{2h}$ on $|z|\leq R$.
By Cauchy's estimate, the coefficient of $z^n$ in $F_K(z)$ satisfies
\[
        |K\cap S_n|\leq C_1 R^{2h-n}=C_1 \rho_*^{\,n-2h}.
\]
Since $|S_n|=4\cdot 3^{n-1}$, we obtain
$\frac{|K\cap S_n|}{|S_n|}\leq C_0\left(\frac34\right)^n\left(\frac94\right)^{-2h}$.
\end{proof}

\begin{lemma}
        \label{L.count-cores}
There is an absolute constant $A>0$ such that the number of rooted folded
labelled rank-two core graphs over the alphabet $\{a,b\}$ with at most $m$
unoriented edges is at most
$
        A(m+1)^4 3^m.
$
\end{lemma}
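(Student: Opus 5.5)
We count rooted folded labelled rank-two core graphs by their topological skeleton, as in Lemma~\ref{lem:rank-two-core-types}. Such a core graph \(X\) is obtained from one of three skeletons \(\bar X\) (figure-eight, theta, barbell), each with at most three topological edges. The data are: a choice of skeleton; a length \(\ell(e)\geq 1\) for each topological edge with \(\sum\ell(e)=|E(X)|\leq m\); a labelling of each topological edge by a reduced word of length \(\ell(e)\) over \(\{a^{\pm1},b^{\pm1}\}\); and a root vertex. Distinct graphs with their roots are images of such data, so it suffices to bound the number of data. Overcounting is harmless, and we ignore the foldedness constraint at branch vertices except where it is needed to reduce the count.

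We first count the lengths and the root. There are three skeleton types. The number of length vectors \((\ell_1,\ell_2,\ell_3)\) with sum \(s\leq m\) is \(O((m+1)^3)\), and in the figure-eight case there are only two lengths. The root is a vertex of \(X\), and there are at most \(|V(X)|\leq |E(X)|+1\leq m+1\) choices. This gives a polynomial factor of \(O((m+1)^4)\).

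The main issue is the labelling, because a naive count gives \(4\cdot 3^{\ell-1}\) words per topological edge. A product over three topological edges would then give \(4^3\,3^{s-3}\). This is still \(O(3^s)\), since the constant \(4^3/3^3\) is absolute, so the crude bound already suffices. Concretely, a path of \(\ell\) edges whose internal vertices have degree two is labelled by a sequence of \(\ell\) oriented letters in which consecutive letters are not mutually inverse. This is exactly the foldedness condition at a degree-two vertex, where the two incident edge-ends must carry distinct labels. There are therefore at most \(4\cdot 3^{\ell-1}\leq 2\cdot 3^{\ell}\) such labels for a fixed orientation, and orienting each topological edge costs at most a constant factor. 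Summing over all data with \(s\leq m\) gives
\[
\#\{X\}\;\leq\; 3\cdot 8\cdot O((m+1)^3)\cdot (m+1)\cdot \max_{s\leq m}3^{s}\;\leq\; A(m+1)^4 3^m .
\]

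The only point needing care is that every rank-two folded core graph actually arises from this data. The branch-vertex foldedness constraint and the folding structure reduce the count but never increase it. Loops of the figure-eight are also covered, since a loop of length \(\ell\) is labelled by a sequence of \(\ell\) letters with the same consecutive constraint. This step is routine given Lemma~\ref{lem:rank-two-core-types}. The resulting estimate is not sharp in the exponent of the polynomial, but the statement only requires \((m+1)^4\).
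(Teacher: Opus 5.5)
Your proposal is correct and follows the paper's proof essentially step for step. You use the three skeleton types of Lemma~\ref{lem:rank-two-core-types}, at most $(m+1)^3$ length vectors, at most $4\cdot 3^{\ell-1}$ reduced labels per topological edge (so $O(3^m)$ labellings in total), and at most $m+1$ root choices; your added remarks, that foldedness at degree-two vertices is exactly reducedness of the edge label and that orientation choices cost only a constant factor, make explicit details the paper leaves implicit.
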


\begin{proof}
After suppressing degree-two vertices, the graph has one of three topological
types: figure-eight, theta, or barbell.  Hence it has at most three
topological edges.

For each of the three topological types, the number of choices of the lengths
of the topological edges, subject to total length at most $m$, is bounded by
$(m+1)^3$.  If a topological edge has length $\ell$, then it admits at most
$4\cdot 3^{\ell-1}$ reduced labels.  Since the total length is at most $m$ and
there are at most three topological edges, the total number of labellings is at
most $A_0 3^m$ for an absolute constant $A_0>0$.  Finally, a graph with at most
$m$ unoriented edges has at most $m+1$ vertices, so the root contributes at
most $m+1$ choices.

Multiplying these bounds and absorbing the three topological types into the
constant gives
\[
        \#\{\text{rooted folded labelled rank-two cores with at most }m
        \text{ edges}\}
        \leq A(m+1)^4 3^m
\]
for an absolute constant $A>0$.
\end{proof}

\section{Random reduced words}\label{S.random-words}

We first prove the result for the non-backtracking model, equivalently for
uniform random words on spheres.

Let $u_n,v_n$ be independent uniformly distributed elements of $S_n$ and put
$H_n=\langle u_n,v_n\rangle$.

For a reduced word $w$, write $|w|_{\mathrm{cyc}}$ for the length of its cyclic
reduction.

\begin{lemma}
Let $u_n$ be uniformly distributed on $S_n$.  Then for every integer
$0\leq k\leq \frac{n}{2}$ one has
$\Prob\bigl(|u_n|_{\mathrm{cyc}}\leq n-2k\bigr)\leq C\,3^{-k}$
for an absolute constant $C>0$.

Moreover, if $m\geq 1$ has the same parity as $n$, then conditioned on
$|u_n|_{\mathrm{cyc}}=m$, 
the cyclic reduction of $u_n$ is uniformly distributed among cyclically
reduced words of length $m$.
\end{lemma}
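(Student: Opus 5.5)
The plan is to parametrize reduced words of length $n$ by their cyclic reduction together with a conjugating prefix, and then count each piece directly. Every reduced word $w$ of length $n$ factors uniquely as $w=\gamma c\gamma^{-1}$ with $|\gamma|=k$, $c$ cyclically reduced of length $m=n-2k\geq 1$, and the concatenation reduced. Reducedness of the concatenation means that the last letter of $\gamma$ is not $c_1^{-1}$ and not $c_m$. Note that $c_1^{-1}\neq c_m$ precisely because $c$ is cyclically reduced. Conversely, every such triple gives a reduced word of length $n$ with cyclic reduction $c$. Hence $|u_n|_{\mathrm{cyc}}=n-2k$ holds exactly when the factorization has prefix length $k$, and $m\equiv n \pmod 2$ automatically. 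The degenerate case $m=0$ occurs only for $w=1$ and is irrelevant for $n\geq 1$.

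For the conditional uniformity, I count the words with a fixed cyclically reduced $c$ of length $m\geq 1$ and fixed $k$. When $k=0$ there is exactly one such word. When $k\geq 1$, the last letter of $\gamma$ has $4-2=2$ choices, since it must avoid the two distinct letters $c_1^{-1}$ and $c_m$. The remaining $k-1$ letters are then filled in backwards, each avoiding the inverse of its successor, giving $3$ choices each. So the count is $2\cdot 3^{k-1}$, which is independent of $c$. Therefore, conditioned on $|u_n|_{\mathrm{cyc}}=m$, the cyclic reduction is uniform among cyclically reduced words of length $m$.

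For the tail bound, summing over $c$ gives
\[
\#\{w\in S_n: |w|_{\mathrm{cyc}}=n-2j\}=Z_{n-2j}\cdot 2\cdot 3^{j-1}\quad (j\geq 1),
\]
where $Z_m$ is the number of cyclically reduced words of length $m$. The standard estimate is $Z_m\leq |S_m|=4\cdot 3^{m-1}$, since cyclically reduced words are in particular reduced. This yields
\[
\Prob\bigl(|u_n|_{\mathrm{cyc}}=n-2j\bigr)\leq \frac{4\cdot 3^{n-2j-1}\cdot 2\cdot 3^{j-1}}{4\cdot 3^{n-1}}=\tfrac{2}{3}\,3^{-j}.
\]
Summing over $j\geq k$ gives $\Prob\bigl(|u_n|_{\mathrm{cyc}}\leq n-2k\bigr)\leq 3^{-k}$ for $k\geq 1$. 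The case $k=0$ is trivial with $C\geq 1$, so $C=1$ works.

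The only point needing care, and the main place an error could slip in, is the claim that the factorization is unique and bijective. This means checking that the maximal cancelling prefix of $w$ against $w^{-1}$ is exactly $\gamma$, and that the constraint on the last letter of $\gamma$ is precisely the one stated. I would verify this by peeling letters off the two ends of $w$ while the first letter is the inverse of the last letter. This process stops exactly at a cyclically reduced middle word, because $m\geq 1$ and the two end letters of the middle are not mutually inverse.
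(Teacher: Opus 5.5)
Your proposal is correct and follows the paper's proof: the same parametrization $w=\gamma c\gamma^{-1}$ and the same count $2\cdot 3^{k-1}$ of admissible prefixes give the conditional uniformity. The only difference is in the tail bound, where you sum the exact level-set counts over $j\geq k$ (obtaining $C=1$), while the paper directly bounds the superset of words $rzr^{-1}$ by $|S_k|\cdot\max\{1,|S_{n-2k}|\}$.
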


\begin{proof}
If $|u_n|_{\mathrm{cyc}}\leq n-2k$, then $u_n$ has the form $u_n=r z r^{-1}$,
where $r$ is reduced of length $k$ and $z$ is reduced, or trivial, of length
$n-2k$.  Thus the number of such words is at most
\[
        |S_k|\cdot \max\{1,|S_{n-2k}|\}
        \leq C_0\,3^{n-k-1}
\]
for an absolute constant $C_0$.  Since $|S_n|=4\cdot 3^{n-1}$, this gives
$\Prob\bigl(|u_n|_{\mathrm{cyc}}\leq n-2k\bigr)\leq C\,3^{-k}$.

For the second claim, write $m=n-2k$ and fix a cyclically reduced word
$c=c_1\cdots c_m$ 
of length $m$.  The reduced words of length $n$ whose cyclic reduction is $c$
are exactly the words $r c r^{-1}$ 
with $|r|=k$ for which the concatenation is reduced.  If $k=0$, there is only
the word $c$ itself.  If $k\geq 1$, the last letter of $r$ may be any of the
two letters different from $c_1^{-1}$ and $c_m$, and once that last letter is
fixed, the preceding $k-1$ letters of $r$ may be chosen arbitrarily subject
only to reducedness.  Hence there are exactly $2\cdot 3^{k-1}$ such words $r$,
independently of $c$.  Therefore every cyclically reduced word
of length $m$ has the same number of preimages, proving the conditional
uniformity.
\end{proof}

The following lemma is in one form or another well known, but we include a proof for completeness.

\begin{lemma}
Let $C_m$ be a uniformly distributed cyclically reduced word of length $m$.
Fix two distinct cyclic starting positions and consider the corresponding cyclic
subwords of length $L$, either both in $C_m$ or with one taken in $C_m$ and the
other in $C_m^{-1}$, with $m\geq 6L$.

Then there is an absolute constant $C>0$ such that the probability that these
two length-$L$ subwords have the same label is at most $C\,3^{-L/2}$.
\end{lemma}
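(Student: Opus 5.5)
We compare the two subwords letter by letter and choose a set of roughly $L/2$ letters of the second subword that behave almost like fresh random letters given the first. Then each such letter matches its prescribed target with probability at most about $1/3$, which gives the bound $C\,3^{-L/2}$. To avoid the cyclic constraint, I would first replace $C_m$ by a uniformly random reduced word $W$ of length $m$ (a non-backtracking walk: first letter uniform among 4, each subsequent letter uniform among the 3 letters not inverse to the previous one). The cyclically reduced words are exactly those $W$ whose last letter is not the inverse of the first. This event has probability bounded below by an absolute constant (about $3/4$, up to an exponentially small correction). So conditioning on it inflates probabilities by at most a constant factor, and it suffices to prove the estimate for $W$ read cyclically.

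The key fact is that for the Markov chain $W$, conditionally on any set of earlier letters, each next letter has each admissible value with probability at most $1/3$. Write the two occurrences as starting at positions $i\neq j$, and let $d$ be the cyclic distance between them.

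\emph{Case both subwords in $C_m$.} If the windows $[i,i+L)$ and $[j,j+L)$ are disjoint (cyclically), fix the first window and look at the $L$ letters of the second window. Reading them in order along the chain, each has conditional probability $\le 1/3$ of equaling the required letter. The exception is at most one letter that wraps across the ``seam'' between position $m$ and position $1$, and another that is adjacent to the first window. Discarding these boundedly many letters gives $\le C3^{-L}$. If the windows overlap with shift $d<L$, then equality forces the segment of length $L+d$ starting at $i$ to be $d$-periodic. Its letters beyond the first $d$ are then determined, so reading them in order yields probability $\le C3^{-L}$. The point of $m\ge 6L$ is that these windows cover at most $2L<m$ positions, so the seam is crossed boundedly often.

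\emph{Case one subword in $C_m$, the other in $C_m^{-1}$.} The second subword is read backwards with letters inverted. If the windows are disjoint, the same argument applies. If they overlap, equality says the union segment is a ``palindrome up to inversion'': letter $x_{i+t}$ equals $x_{j'-t}^{-1}$, where $j'$ is the starting position in $C_m$ of the reversed window. The center of such a reflection would force two adjacent letters to be mutually inverse, or a letter to be its own inverse, both impossible in a reduced word. Away from the center, pairing letters across the center gives about $L/2$ independent constraints. I would reveal letters outward from the far end, so that each constrained letter is the later of its pair. Each such constraint holds with probability $\le 1/3$, giving $C3^{-L/2}$. This overlapping inverse case is exactly where only $L/2$ constraints are guaranteed, and it dictates the exponent.

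The main obstacle is the bookkeeping of the dependence structure. We need to ensure that, in every case, there is an ordering of positions in which about $L/2$ constrained letters each come strictly after all the letters they are compared to. We also need to handle the cyclic seam and the conditioning on cyclic reducedness uniformly in $m$. I would organize this by always revealing $W$ from position $1$ onward and choosing, in each case, the later letter of each constrained pair. This loses only $O(1)$ letters at the seam.
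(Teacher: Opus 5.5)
Your proposal is correct and is essentially the paper's proof. It uses the same passage to a uniform reduced word at a bounded conditioning cost and the same four-case split: periodicity when same-orientation windows overlap, and a fixed-point-free reflection pairing giving $3^{-L/2}$ when a window overlaps an inverse window. The paper counts labels on the relevant segment where you use a sequential $1/3$ bound. It also removes the seam entirely by rotating $C_m$ so that the cut lies outside the union of the two windows (possible since $m\geq 6L>2L$), which is cleaner than your $O(1)$-loss seam bookkeeping: when a periodic segment crosses the seam, the ``later letter of each pair'' rule does not produce distinct letters. Incidentally, your center observation already makes the overlapping inverse event empty, since the intersection of the two windows would be a nonempty reduced word equal to its own formal inverse, so the bound in fact holds with $C\,3^{-L}$ in every case.
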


\begin{proof}
Because the law of $C_m$ is invariant under cyclic rotation, we may place the
first starting position at $1$.  Each chosen occurrence determines an interval
of $L$ consecutive positions in the original cyclic word $C_m$; if an
occurrence is taken in $C_m^{-1}$, we first pull it back to the corresponding
interval in $C_m$ by reversing orientation and inverting labels.  The union of
the two resulting intervals has size at most $2L$, so since $m\geq 6L$ we may
cut the cyclic word at a position outside this union and regard all relevant
letters as lying in a genuine linear reduced word.

First note that a uniformly distributed cyclically reduced word of length $m$ is
obtained from a uniformly distributed reduced word of length $m$ by conditioning
on the last letter not being the inverse of the first.  Given the first $m-1$
letters, the last letter has three possible values, and at most one of them is
forbidden by cyclic reduction.  Hence this conditioning changes probabilities by
at most the factor
$
        3/2.
$
It is therefore enough to prove an exponential bound for a uniform reduced word.

Assume first that both occurrences lie in the same orientation of the word, so
we are comparing two length-$L$ blocks in a uniform reduced word $W$.  Let $t$
be the distance between their starting positions.  If $t\geq L$, then the two
blocks are disjoint, and after revealing the first block the second must equal
one prescribed reduced word of length $L$.  Thus the probability is at most
$3^{-L}$.

If $1\leq t<L$, then equality of the two blocks is equivalent to saying that the
linear segment of length $L+t$ spanning them has period $t$.  Once the first
$t$ letters are chosen, the next $L$ letters are forced.  Hence the number of
possible reduced labels for that segment is at most $4\cdot 3^{t-1}$, whereas
the total number of reduced labels of length $L+t$ is $4\cdot 3^{L+t-1}$.  So
the probability is again at most $3^{-L}$.

It remains to consider the case where one occurrence is taken in $W^{-1}$.  View
that occurrence back in $W$ as the reverse inverse of a length-$L$ block.  If
the two corresponding length-$L$ intervals in $W$ are disjoint, then after
revealing the first block the second interval must realize one prescribed
reduced word, so the probability is at most $3^{-L}$.

Assume finally that these two intervals overlap.  Let $I$ be the minimal linear
segment of $W$ containing their union, and write $s=|I|$.  Then $L\leq s<2L$.
Equality of the original block with the inverse occurrence induces an
orientation-reversing involution on the positions of $I$.  A fixed point would
force a letter to equal its own inverse, which is impossible, so every orbit has
size two.  Hence choosing the letters on one representative from each orbit
determines the whole segment $I$.  Therefore the number of reduced labels on $I$
compatible with the overlap condition is at most
$
        4\cdot 3^{\lceil s/2\rceil-1}.
$
Since the total number of reduced labels of length $s$ is $4\cdot 3^{s-1}$, the
probability of this overlap event is at most
$
        3^{-\lfloor s/2\rfloor}\leq 3^{-L/2}.
$

Thus, for a uniform reduced word, the probability is at most $3^{-L/2}$ in all
cases.  Multiplying by the conditioning factor $3/2$ and absorbing constants
proves the lemma.
\end{proof}

\begin{lemma}
There is an absolute constant $A>0$ such that for every integer $L$ with
$1\leq L<n/6$,
\[
        \Prob\bigl((u_n,v_n)\text{ is not }L\text{-clean}\bigr)
        \leq A\left(3^{-(n-6L)/2}+n^2 3^{-L/2}\right).
\]
\end{lemma}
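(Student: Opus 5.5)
Failure of $L$-cleanness means one of two things: condition (1) fails, or condition (2) fails. I would bound the two events separately and add the bounds.

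\emph{Condition (1).} It fails only if $|u_n|_{\mathrm{cyc}}<6L$ or $|v_n|_{\mathrm{cyc}}<6L$. Cyclic reduction removes letters in pairs, so $|u_n|_{\mathrm{cyc}}=n-2k$ for some $k$. The event $|u_n|_{\mathrm{cyc}}<6L$ then forces $2k>n-6L$, that is, $|u_n|_{\mathrm{cyc}}\leq n-2k_0$ with $k_0=\lceil (n-6L)/2\rceil\leq n/2$. The first lemma of this section gives probability at most $C3^{-k_0}\leq C3^{-(n-6L)/2}$. Doing the same for $v_n$ and taking a union bound yields the first term $A\,3^{-(n-6L)/2}$.

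\emph{Condition (2), on the complement of the first event.} Condition on the values $m_1=|u_n|_{\mathrm{cyc}}$ and $m_2=|v_n|_{\mathrm{cyc}}$, both $\geq 6L$. By the second part of the first lemma, the cyclic reductions $C^{(1)}$ and $C^{(2)}$ are then independent and uniform among cyclically reduced words of lengths $m_1$ and $m_2$. A failure of (2) means two distinct cyclic positions among the four cyclic words $C^{(1)\pm1}$ and $C^{(2)\pm1}$ carry the same length-$L$ label. There are at most $(2m_1+2m_2)^2\leq 16n^2$ ordered pairs of positions, so a union bound reduces everything to a bound on a single pair.

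\emph{Single-pair bounds.} If both positions lie in $C^{(1)\pm1}$, the preceding lemma covers same-orientation pairs and pairs with one occurrence in $C^{(1)}$ and one in $C^{(1)^{-1}}$. A pair lying entirely in $C^{(1)^{-1}}$ is converted to a pair in $C^{(1)}$ by inverting. In all these cases the preceding lemma gives probability at most $C3^{-L/2}$. If the two positions lie in different words, $C^{(1)}$ and $C^{(2)}$ are independent. Condition on the block in the first word; the block in the second is a fixed-position block of a uniform cyclically reduced word, so it equals a prescribed reduced word with probability at most $\tfrac32\cdot 3^{-(L-1)}/\tfrac43\cdot$const $\leq C3^{-L}$. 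This uses the same conditioning factor $3/2$ and a cut outside the block, which is possible since $m_2\geq 6L$. Summing over pairs gives $16n^2C3^{-L/2}$, uniformly in $(m_1,m_2)$. Averaging over the conditioning and combining with the first part gives the claim.

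\emph{Main obstacle.} The delicate point is making sure every type of pair is reduced to the preceding lemma or to the independent case. In particular, positions that coincide as sets but differ in orientation are not two occurrences in the same cyclic word: they involve $C$ and $C^{-1}$ at mirrored positions, and the overlap/involution case of the preceding lemma handles exactly this.
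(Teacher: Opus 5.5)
Your proposal is correct and follows the paper's proof essentially step by step. The matching steps are: the cyclic-cancellation lemma with $k=\lceil (n-6L)/2\rceil$ for condition (1); conditioning on the two cyclic lengths to get independent uniform cyclically reduced words; a union bound over at most $16n^2$ ordered position pairs; the fixed cyclic block lemma for pairs within one of $\widehat u_n^{\pm1}$ or $\widehat v_n^{\pm1}$; and independence for pairs across the two words (your garbled cross-word constant should simply read $\frac32\cdot\frac{1}{4\cdot 3^{L-1}}\leq C\,3^{-L}$).
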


\begin{proof}
Let $\widehat u_n,\widehat v_n$ 
be the cyclic reductions of $u_n$ and $v_n$.  Applying the previous lemma with
$k=\left\lceil \frac{n-6L}{2}\right\rceil$ gives
\[
        \Prob\bigl(|\widehat u_n|<6L\text{ or }|\widehat v_n|<6L\bigr)
        \leq A_0\,3^{-\lceil (n-6L)/2\rceil}
        \leq A_0\,3^{-(n-6L)/2}.
\]

Now condition on $|\widehat u_n|=m$ and $|\widehat v_n|=\ell$, 
where $m,\ell\geq 6L$.  By the second part of the previous lemma,
$\widehat u_n$ and $\widehat v_n$ are independent uniformly distributed
cyclically reduced words of lengths $m$ and $\ell$.

It remains to estimate repeated cyclic factors of length $L$ in
$\widehat u_n,\widehat u_n^{-1},\widehat v_n,\widehat v_n^{-1}$.  There are at
most $(2m+2\ell)^2\leq 16n^2$ ordered pairs of cyclic starting positions.

For two fixed distinct starting positions in the same cyclic word, the fixed
cyclic block estimate gives
\[
        \Prob\bigl(\text{the two length-}L\text{ blocks agree}\bigr)
        \leq A_1 3^{-L/2}.
\]
If the two starting positions lie in different cyclic words, then after
revealing the first block, the second block must realize one prescribed reduced
word of length $L$, so the same bound holds.  Finally, a block in an inverse
cyclic word is obtained from a block in the original cyclic word by reversing
the order and inverting the letters.  Thus the same bound also applies when one
or both occurrences are taken in inverse cyclic words.

Therefore
\[
        \Prob\bigl(\text{some length-}L\text{ cyclic factor repeats}
        \mid |\widehat u_n|=m,\ |\widehat v_n|=\ell\bigr)
        \leq A_2 n^2 3^{-L/2}.
\]
Averaging over $m,\ell\geq 6L$ and combining with the first estimate gives
\[
        \Prob\bigl((u_n,v_n)\text{ is not }L\text{-clean}\bigr)
        \leq A_0 3^{-(n-6L)/2}+A_2 n^2 3^{-L/2}.
\]
This proves the claim.
\end{proof}

\begin{theorem}\label{T.sphere}\label{thm:sphere-prime}
There are absolute constants $A,B>0$ such that
\[
        \Prob\bigl(\varphi_{u_n,v_n}\text{ is not prime}\bigr)
        \leq An^B 3^{-n/14}.
\]
\end{theorem}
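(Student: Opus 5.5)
We argue by a union bound over three bad events, with the cleanness parameter chosen as $L=\lfloor n/7\rfloor$; this satisfies $1\leq L<n/6$ for $n$ large, and small $n$ are absorbed into the constant $A$. By Lemma~\ref{L.primecriterion} and the lemma on algebraic extensions following Definition~\ref{def:alg}, if $\varphi_{u_n,v_n}$ is not prime then one of the following holds:
\begin{enumerate}
\item $\varphi_{u_n,v_n}$ is not injective;
\item $\varphi_{u_n,v_n}$ is injective and there is a proper rank-two $K$ with $H_n\leq_{\alg}K$ and $H_n\neq K$.
\end{enumerate}
If $(u_n,v_n)$ is $L$-clean, the lemma on clean pairs shows that $u_n,v_n$ freely generate $H_n$, which excludes case (1). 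In case (2), Proposition~\ref{P.volume} then gives $\vol(K)<3L$. Hence
\[
\Prob(\text{not prime})\leq \Prob\bigl((u_n,v_n)\text{ not }L\text{-clean}\bigr)+\sum_{K}\Prob(u_n\in K)\,\Prob(v_n\in K),
\]
where the sum runs over proper rank-two subgroups $K<F_2$ with $\vol(K)<3L$. We use independence of $u_n$ and $v_n$ to factor the probabilities.

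For the first term we apply the non-cleanness lemma. It gives a bound $A\bigl(3^{-(n-6L)/2}+n^23^{-L/2}\bigr)$. With $L\approx n/7$ both exponents are about $n/14$, so this term is $O(n^2 3^{-n/14})$, up to a bounded factor from the floor.

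For the second term we parametrize $K$ by its pointed Stallings graph $\widehat\Gamma(K)$. This consists of the core $\Gamma(K)$, rooted at the stem attachment point $x$, together with a reduced stem label $\gamma$ of length $h=h(K)$.
\begin{itemize}
\item By Lemma~\ref{L.count-cores}, the number of rooted cores with fewer than $3L$ edges is at most $A(3L+1)^43^{3L}$.
\item The number of stem labels of length $h$ is at most $4\cdot 3^{h-1}$, or $1$ if $h=0$.
\item By Lemma~\ref{L.return}, each such $K$ contributes at most $C_0^2(3/4)^{2n}(9/4)^{-4h}$ to the sum.
\end{itemize}
Summing over $h$, the series $\sum_h 3^h(4/9)^{4h}$ converges because $3\cdot(4/9)^4<1$. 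The second term is therefore at most
\[
A' n^4\, 3^{3L}\,(9/16)^n = A'n^4\,3^{3L-\kappa n},\qquad \kappa=\log_3(16/9)\approx 0.5237.
\]
With $L\leq n/7$ we get $3L-\kappa n\leq (3/7-0.5237)n<-n/14$, so this term is also $O(n^4 3^{-n/14})$. Adding the two terms gives $An^B3^{-n/14}$ with $B=4$.

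The main point to check carefully is the enumeration in the second term: every relevant $K$ must arise from some (rooted core, stem) pair, and the count must be uniform in $h$. Here we use that a pointed folded Stallings graph is uniquely determined by its core, the attachment vertex and the stem label. The stem-length decay in Lemma~\ref{L.return} is what beats the $3^h$ growth in the number of stems. Beyond that, the argument only requires the choice of $L$ to balance the cleanness exponent $L/2$ against the exponent $\kappa n-3L$ from the core count. Balancing exactly would even give a slightly better rate; the choice $L=n/7$ is made so that both contributions from the cleanness lemma match at $n/14$.
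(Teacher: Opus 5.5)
Your proposal is correct and follows the paper's proof essentially step for step. It uses the same choice $L=\lfloor n/7\rfloor$, the non-cleanness lemma for the first term, Proposition~\ref{P.volume} to confine any intermediate $K$ to $\vol(K)<3L$, and a union bound over (rooted core, stem label) pairs via Lemma~\ref{L.count-cores} and Lemma~\ref{L.return}, with $3(4/9)^4<1$ absorbing the stem growth and $3^{3L}(9/16)^n\leq 3^{-n/14}$ closing the estimate.

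One small correction to your closing aside, which does not affect the proof. The rate $n/14$ is fixed by the two cleanness exponents $(n-6L)/2$ and $L/2$, and the core-count exponent $\kappa n-3L$ still has slack at $L=n/7$. Rebalancing $L/2$ against $\kappa n-3L$ would push $L$ above $n/7$ and make the $(n-6L)/2$ term worse, so it would not give a better rate.
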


The exponent $1/14$ comes from balancing the two cleanliness errors.  The
cyclic-cancellation term is $3^{-(n-6L)/2}$, while the repeated-block term is
$n^2 3^{-L/2}$.  Choosing $L=\lfloor n/7\rfloor$ balances these two
contributions.  The overgroup contribution is smaller after the improved
cogrowth estimate, since it is bounded by
$
        3^{3L}\left(\frac34\right)^{2n}.
$

\begin{proof}
Put
$
        L=\left\lfloor\frac n7\right\rfloor.
$
Let $\mathcal G_n$ be the event that $(u_n,v_n)$ is $L$-clean.  By the
explicit cleanliness lemma,
\[
        \Prob(\mathcal G_n^c)
        \leq A_0\left(3^{-(n-6L)/2}+n^2 3^{-L/2}\right).
\]
Since $L\leq n/7$ and $L\geq n/7-1$, the right-hand side is at most
$
        A_1 n^2 3^{-n/14}.
$
On $\mathcal G_n$, the pair $(u_n,v_n)$ freely generates a rank-two subgroup.

Assume that $\mathcal G_n$ holds and that there exists
\(H_n<K<F_2\) with $\rk K=2$.  Then $H_n\leq_{\alg}K$
by the rank argument in Section 1.  The deterministic rank-two quotient lemma
therefore gives $\vol(K)<3L$.

Thus, on $\mathcal G_n$, non-primality implies that both $u_n$ and $v_n$ lie in
some proper rank-two subgroup $K<F_2$ with $\vol(K)<3L$.

By the counting lemma, the number of possible rooted folded labelled rank-two
cores of volume less than $3L$ is at most
$
        A_2 (L+1)^4 3^{3L}.
$
For each such rooted core and each integer $h\geq 0$, there are at most
$C_0'3^h$ possible reduced stem labels of length $h$.  Hence the number of
proper rank-two subgroups $K<F_2$ with $\vol(K)<3L$ and
$h(K)=h$ is at most
$
        A_3 (L+1)^4 3^{3L+h}.
$
For each fixed such subgroup, the return estimate gives
\[
        \Prob(u_n\in K,\ v_n\in K)
        \leq C_0^2 \left(\frac34\right)^{2n}\left(\frac94\right)^{-4h}.
\]

Therefore
\[
\begin{aligned}
&\Prob\bigl(\exists K<F_2:\ \rk K=2,\ \vol(K)<3L,
                  \ u_n,v_n\in K\bigr) \\
&\qquad\leq
        A_4 (L+1)^4 3^{3L} \left(\frac34\right)^{2n}
        \sum_{h\geq 0} 3^h\left(\frac94\right)^{-4h}.
\end{aligned}
\]
Since
$
        3\left(\frac94\right)^{-4}=3\left(\frac49\right)^4=\frac{768}{6561}<1,
$
the geometric sum is bounded by an absolute constant.  Hence
\[
        \Prob\bigl(\exists K<F_2:\ \rk K=2,\ \vol(K)<3L,
                  \ u_n,v_n\in K\bigr)
        \leq A_5 (L+1)^4 3^{3L}\left(\frac34\right)^{2n}.
\]
Since $L\leq n/7$, we have
$
        3^{3L}\left(\frac34\right)^{2n}
        \leq 3^{3n/7}\left(\frac34\right)^{2n}
        \leq 3^{-n/14},
$
because this is equivalent to
$
        3^{1/2}\leq \left(\frac43\right)^2,
$
and indeed $243<256$.  Therefore
\[
        \Prob\bigl(\exists K<F_2:\ \rk K=2,\ \vol(K)<3L,
                  \ u_n,v_n\in K\bigr)
        \leq A_6 (n+1)^4 3^{-n/14}.
\]
Combining this with the estimate for $\mathcal G_n^c$ gives
$
        \Prob\bigl(\varphi_{u_n,v_n}\text{ is not prime}\bigr)
        \leq A n^B 3^{-n/14}
$
for some absolute constants $A,B>0$.

Finally, on $\mathcal G_n$, the pair $(u_n,v_n)$ freely generates $H_n$.
Hence the endomorphism with $a\mapsto u_n$ and $b\mapsto v_n$ is injective.  By
Lemma~\ref{L.primecriterion}, absence of a proper intermediate rank-two
subgroup is exactly primeness.
\end{proof}

\begin{corollary}\label{C.mixed-spheres}\label{cor:mixed-sphere}
There are absolute constants $A,B>0$ such that for every pair of integers
$r,s\geq 1$, if $U_r$ and $V_s$ are independent uniformly distributed elements
of $S_r$ and $S_s$, and if $N=\min\{r,s\}$, then
\[
        \Prob\bigl(\varphi_{U_r,V_s}\text{ is not prime}\bigr)
        \leq A(r+s)^B 3^{-N/14}.
\]
\end{corollary}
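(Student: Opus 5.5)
We will rerun the proof of Theorem~\ref{T.sphere} with two independent lengths $r,s$, choosing the cleanliness scale by the shorter length $N=\min\{r,s\}$. Put $L=\lfloor N/7\rfloor$; if $N<7$ the claim is trivial after enlarging $A$. Let $\mathcal G$ be the event that $(U_r,V_s)$ is $L$-clean, and split the probability of non-primeness into the contribution of $\mathcal G^c$ and that of $\mathcal G$.

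\textbf{Step 1: the non-clean event.} We bound $\Prob(\mathcal G^c)$ exactly as in the cleanliness lemma.

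\emph{Short cyclic reduction.} The cyclic-reduction lemma applied to $U_r$ with $k=\lceil (r-6L)/2\rceil$ gives $\Prob(|U_r|_{\mathrm{cyc}}<6L)\leq C3^{-(r-6L)/2}\leq C3^{-(N-6L)/2}$, and similarly for $V_s$.

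\emph{Repeated blocks.} Conditioned on the cyclic lengths $m,\ell\geq 6L$, the cyclic reductions are independent and uniform. The fixed-block lemma, together with the independence argument for blocks lying in different words, gives probability at most $C3^{-L/2}$ per pair of starting positions. The number of such pairs is at most $(2m+2\ell)^2\leq 4(r+s)^2$.

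Hence $\Prob(\mathcal G^c)\leq A\bigl(3^{-(N-6L)/2}+(r+s)^23^{-L/2}\bigr)\leq A'(r+s)^2 3^{-N/14}$.

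\textbf{Step 2: the clean event.} On $\mathcal G$ the pair freely generates $H$, so $\varphi_{U_r,V_s}$ is injective. By Lemma~\ref{L.primecriterion} and Proposition~\ref{P.volume}, non-primeness forces $U_r,V_s\in K$ for some proper rank-two $K$ with $\vol(K)<3L$. Neither argument uses equality of the lengths; only cleanliness at scale $L$ enters.

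By independence and Lemma~\ref{L.return},
\[
\Prob(U_r\in K,\ V_s\in K)\leq C_0^2\left(\tfrac34\right)^{r+s}\left(\tfrac94\right)^{-4h(K)}.
\]
We sum this over $K$ using Lemma~\ref{L.count-cores} and the at most $C3^h$ stem labels of each length $h$. The sum over $h$ is again geometric, which gives the bound
\[
A(L+1)^43^{3L}\left(\tfrac34\right)^{r+s}\leq A(L+1)^4 3^{3N/7}\left(\tfrac34\right)^{2N}\leq A(N+1)^4 3^{-N/14},
\]
using $r+s\geq 2N$ and the same numerical inequality $243<256$ as in the proof of Theorem~\ref{T.sphere}.

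\textbf{Main obstacle.} The only delicate point is Step 1: the constants must be uniform in $r,s$, and the polynomial factor must be expressed in $r+s$ rather than in $N$. This works because the number of starting-position pairs grows with the longer word while the per-pair bound $3^{-L/2}$ depends only on $L$. Adding the two contributions yields the corollary with $B=4$.
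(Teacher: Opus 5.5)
Your proposal is correct and follows the paper's proof essentially step for step. The paper uses the same choice $L=\lfloor N/7\rfloor$ and the same split into the non-clean event (cyclic cancellation, plus repeated blocks over $O((r+s)^2)$ position pairs) and the clean event (volume bound, core count, and return estimate with $(3/4)^{r+s}\le(3/4)^{2N}$). Your explicit treatment of $N<7$, where $L=0$ and $L$-cleanness is not defined, is a small point the paper leaves implicit.
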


\begin{proof}
Put
$
        N=\min\{r,s\},$ and $
        L=\left\lfloor\frac N7\right\rfloor.
$
Let $\widehat U_r$ and
$\widehat V_s$ be the cyclic reductions of $U_r$ and $V_s$.  Applying the cyclic
cancellation lemma separately to $U_r$ and $V_s$ gives
\[
        \Prob\bigl(|\widehat U_r|<6L\text{ or }|\widehat V_s|<6L\bigr)
        \leq A_0\left(3^{-(r-6L)/2}+3^{-(s-6L)/2}\right)
        \leq A_1 3^{-N/14}.
\]
Conditioned on $|\widehat U_r|=m$ and $|\widehat V_s|=\ell$, with $m,\ell\geq 6L$,
the cyclic reductions are independent uniformly distributed cyclically reduced
words of lengths $m$ and $\ell$.  The four cyclic words
$\widehat U_r,\widehat U_r^{-1},\widehat V_s,\widehat V_s^{-1}$ have at most
$2m+2\ell\leq 2(r+s)$ cyclic starting positions in total.  Therefore, by the
fixed cyclic block estimate,
\begin{multline*}
        \Prob\bigl((U_r,V_s)\text{ is not }L\text{-clean}
        \mid |\widehat U_r|=m,\ |\widehat V_s|=\ell\bigr) \\
        \leq A_2 (r+s)^2 3^{-L/2}
        \leq A_3 (r+s)^2 3^{-N/14}.
\end{multline*}
On the good event, the pair freely generates a rank-two subgroup.  If the
associated endomorphism is not prime, then both words lie in a proper rank-two
subgroup $K$ with $\vol(K)<3L$, by the deterministic quotient lemma.  The
counting lemma and the return estimate therefore give
\[
\begin{aligned}
        \Prob\bigl(\exists K<F_2:\ U_r,V_s\in K,\ \vol(K)<3L\bigr)
        &\leq A_4 (L+1)^4 3^{3L} \left(\frac34\right)^{r+s}
        \sum_{h\geq 0} 3^h\left(\frac94\right)^{-4h} \\
        &\leq A_5 (N+1)^4 3^{3L} \left(\frac34\right)^{2N} \\
        &\leq A_6 (N+1)^4 3^{-N/14}.
\end{aligned}
\]
Combining the three estimates and using $(N+1)^4\leq (r+s)^4$ proves the claim.
\end{proof}

\section{Bounds on the logarithmic density}\label{S.log-density}

\subsection{Upper bounds}

The sphere estimate from Theorem~\ref{T.sphere} is the technical result needed to prove the upper bound in Theorem~A.
We now transfer it to balls and compare it with explicit lower bounds, first
from fixed proper rank-two subgroups and then from a variable-overgroup
construction.

\begin{corollary}\label{C.ball.upper}
There are absolute constants $A,B>0$ such that
\[
        \Prob_{(u,v)\in B_n\times B_n}
        \bigl(\varphi_{u,v}\text{ is not prime}\bigr)
        \leq A n^B3^{-n/14}.
\]
\end{corollary}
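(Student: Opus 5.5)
The plan is to decompose the ball into spheres and apply Corollary~\ref{C.mixed-spheres} to each pair of spheres. Since $B_n=\bigsqcup_{r=0}^n S_r$, the uniform measure on $B_n\times B_n$ is a mixture of the uniform measures on the products $S_r\times S_s$, with weights
\[
        w_{r,s}=\frac{|S_r|\,|S_s|}{|B_n|^2}.
\]
Hence
\[
        \Prob_{B_n\times B_n}(\text{not prime})
        =\sum_{r,s=0}^n w_{r,s}\,\Prob\bigl(\varphi_{U_r,V_s}\text{ not prime}\bigr).
\]
The pairs with $r=0$ or $s=0$ consist of non-injective endomorphisms, and Corollary~\ref{C.mixed-spheres} does not cover them. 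We therefore bound their conditional probability trivially by $1$.

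Next, split the sum by $N=\min\{r,s\}$.

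\emph{Case $N\geq n/2$} (say). Here Corollary~\ref{C.mixed-spheres} gives the bound $A(2n)^B3^{-N/14}$. This alone is not enough, because we need decay of order $3^{-n/14}$ rather than $3^{-N/14}$. So we use the weights as well. We have $|S_r|\leq 4\cdot 3^{r-1}$ and $|B_n|\geq 3^n$, so $w_{r,s}\leq C\,3^{r+s-2n}$. Since $r+s\leq n+N$, this gives $w_{r,s}\leq C\,3^{N-n}$.

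\emph{Combining the two factors.} The total contribution from the pair $(r,s)$ is at most
\[
        C n^B\, 3^{-N/14}\,3^{-(n-N)}.
\]
As a function of $N\leq n$, the exponent $-N/14-(n-N)$ is maximized at $N=n$, where it equals $-n/14$. Thus every term is at most $Cn^B3^{-n/14}$, and there are $(n+1)^2$ terms, so the sum is at most $C n^{B+2}3^{-n/14}$. The same computation handles every $N\geq 1$ uniformly, so the split at $n/2$ is in fact unnecessary.

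\emph{Degenerate terms.} When $\min\{r,s\}=0$, the total weight is at most
\[
        \frac{2|B_n|}{|B_n|^2}\leq C\,3^{-n},
\]
which is far smaller than $3^{-n/14}$.

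\emph{Main point.} The key step is the observation that the small weight $3^{N-n}$ of lopsided sphere pairs more than compensates for the weaker decay $3^{-N/14}$ that Corollary~\ref{C.mixed-spheres} gives for them. This works because the per-step rate $1/14$ is less than the volume-growth rate $1$. After that, the proof is only bookkeeping with absolute constants, and the logarithmic density bound $\delta_{\mathrm{np}}\leq 27/28$ then follows from the introductory remark relating an $n^B3^{-\alpha n}$ bound to $\delta_{\mathrm{np}}\leq 1-\alpha/2$, applied with $\alpha=1/14$.
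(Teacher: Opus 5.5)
Your proposal is correct and is essentially the paper's own argument. You condition on the sphere radii $(r,s)$ and bound the weight by $C\,3^{r+s-2n}\le C\,3^{\min\{r,s\}-n}$. Combined with Corollary~\ref{C.mixed-spheres}, this gives $3^{-n+13\min\{r,s\}/14}\le 3^{-n/14}$ per term, and the $(n+1)^2$ terms and the $O(3^{-n})$ degenerate cases go into the polynomial prefactor; as you noticed, the preliminary split at $N\ge n/2$ is unnecessary.
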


\begin{proof}
The cases where $u=1$ or $v=1$ contribute $O(3^{-n})$, which is harmless.
For $r,s\geq 1$, condition on $|u|=r$ and $|v|=s$.  Since
$
        |S_r|=4\cdot 3^{r-1}$ and $
        |B_n|=2\cdot 3^n-1,
$
the contribution of lengths $r,s$ is bounded by
\[
        C 3^{r+s-2n}\cdot A(r+s)^B3^{-\min\{r,s\}/14}.
\]
Assume $r\leq s$.  Then
\[
        3^{r+s-2n}3^{-r/14}
        \leq 3^{r+n-2n}3^{-r/14}
        =3^{-n}3^{13r/14}
        \leq 3^{-n/14}.
\]
The same bound holds when $s\leq r$.  Summing over the at most $(n+1)^2$ pairs
of lengths contributes only a polynomial factor, so the whole probability is at
most $A'n^{B'}3^{-n/14}$ for suitable absolute constants $A',B'$.  Renaming the
constants gives the claim.
\end{proof}

For each $n\geq 0$, let
$
        \mathcal N_n=
        \{(u,v)\in B_n\times B_n:\varphi_{u,v}\text{ is not prime}\}
$
and define
\[
        \delta_{\mathrm{np}}=
        \limsup_{n\to\infty}\frac{\log |\mathcal N_n|}{\log |B_n\times B_n|}.
\]
Equivalently, if
$
        p_n=|\mathcal N_n| |B_n\times B_n|^{-1},
$
then
\[
        \delta_{\mathrm{np}}=
        1+\frac12\limsup_{n\to\infty}\frac{\log p_n}{n\log 3},
\]
because
$
        \log |B_n\times B_n|=2n\log 3+O(1).
$
In particular, Corollary~\ref{C.ball.upper} yields
\[
        \delta_{\mathrm{np}}\leq 1-\frac1{28}=\frac{27}{28}
\]
and this finishes the proof of the upper bound in Theorem~A.

\subsection{Lower bounds}

We now give lower bounds for $\delta_{\mathrm{np}}$. The first one is based on the idea of fixing an overgroup of rank two, which corresponds to fixing the first factor in the prime factorization, and the second one on fixing the second factor in the factorization. The second approach turns out to be more powerful, but we include the first one for completeness and because it is more elementary.

\begin{proposition}\label{P.fixedK.lower}
Let $K<F_2$ be a proper rank-two subgroup, and let
\[
        \rho(K)=\limsup_{n\to\infty}|K\cap B_n|^{1/n}
\]
be its ambient exponential growth rate.  Then
$
        \delta_{\mathrm{np}}
        \geq \frac{\log \rho(K)}{\log 3}.
$
\end{proposition}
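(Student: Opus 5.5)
The idea is to exhibit many non-prime pairs by placing both $u$ and $v$ inside the fixed proper rank-two subgroup $K$. If $u,v\in K$ do not commute, then $H=\langle u,v\rangle\leq K<F_2$. If moreover $H\neq K$, then $K$ is a proper intermediate rank-two subgroup. Lemma~\ref{L.primecriterion} then shows that $\varphi_{u,v}$ is not prime. If $u,v$ commute, $\varphi_{u,v}$ is not injective (Lemma~\ref{L.injective.noncommuting}), so it is again not prime. Hence every pair $(u,v)\in (K\cap B_n)^2$ lies in $\mathcal N_n$, except possibly pairs that generate $K$ itself.

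The first step is to bound the exceptional pairs $(u,v)$ with $\langle u,v\rangle=K$. Such a pair is a free basis of $K$. Nielsen/Stallings-type arguments (or just the fact that $u,v$ must fill the core $\Gamma(K)$, cf.\ Lemma~\ref{lem:core-image-fills}) do not immediately give a small count. A simpler route avoids counting them altogether: fix a non-trivial $g\in K$ and use the pairs $(u,u')$ where $u'$ is chosen in the proper subgroup $\langle g\rangle$, or more cleanly restrict $v$ to the index-type subfamily $v\in K'=$ some proper rank-two subgroup of $K$ of the same growth. Concretely, I would instead take pairs $(u,v)$ with $u,v\in K\cap B_n$ and $v$ commuting with $u$ excluded is unnecessary. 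The cleanest version counts pairs $(u,v)$ with $u\in K\cap B_n$ and $v=u^2$ or $v=1$. These are never injective, but that gives only $|K\cap B_n|$ pairs, i.e.\ density $\log\rho(K)/(2\log3)$, which is too weak.

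So I return to the full square and show the exceptional set is negligible. If $\langle u,v\rangle=K$ with $u,v\in B_n$, then $u$ is primitive in $K$. Take a free basis $x,y$ of $K$, and pass to the pairs $(u,v)$ with $u,v\in K_0=\langle x^2,y\rangle$ or similar. A proper finite-index subgroup $K_0<K$ of rank two does not exist in general (finite index subgroups have larger rank). So I would use instead the subgroup $K_1=\langle x,\,y x y^{-1}\rangle$ or $\langle x^2,y\rangle$, which is a proper rank-two subgroup of $K$; both $u,v\in K_1$ forces $\langle u,v\rangle\leq K_1<K$, so $K$ is a proper rank-two overgroup and the pair is non-prime. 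The main obstacle is then that $K_1$ may have smaller growth than $K$.

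To handle this, I would rather directly prove that the generating pairs are few. Their number is at most $|K\cap B_n|\cdot\#\{v\in B_n:\langle u,v\rangle=K\}$, and for fixed $u$ the admissible $v$ lie in finitely many double cosets $\langle u\rangle v_0\langle u\rangle$ up to the sign, since bases of $F_2$ extending a primitive element are $v_0^{\pm1}$ multiplied on both sides by powers of $u$. Hence the count for fixed $u$ is $O(n^2)$. The exceptional pairs therefore number at most $O(n^2)|K\cap B_n|$. This is $o(|K\cap B_n|^2)$ whenever $\rho(K)>1$, which holds since $K$ is non-cyclic. Thus $|\mathcal N_n|\geq \tfrac12|K\cap B_n|^2$ along a subsequence realizing the limsup. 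Dividing $\log$ by $\log|B_n\times B_n|=2n\log3+O(1)$ gives $\delta_{\mathrm{np}}\geq 2\log\rho(K)/(2\log 3)=\log\rho(K)/\log3$. The step I expect to need the most care is this basis-extension double-coset description; it is a standard fact for $F_2$ that I would cite or verify via the Nielsen/abelianization argument.
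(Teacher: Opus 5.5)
Your argument is essentially correct, but it takes a detour that the paper avoids. Like you, the paper places $u,v$ in $K$. It disposes of the exceptional pairs with $\langle u,v\rangle=K$ by restricting both $u$ and $v$ to a proper finite-index subgroup $L<K$. Then $\langle u,v\rangle\leq L<K<F_2$, so $K$ itself is the proper intermediate rank-two subgroup. Since $K=\bigcup_{i=1}^d Lg_i$, one has $|K\cap B_n|\leq d\,|L\cap B_{n+c}|$, so $L$ has the same growth rate $\rho(K)$.

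You considered exactly this and discarded it because finite-index subgroups of $K$ have rank larger than two. That objection is misplaced: the rank of $L$ plays no role. Your own sentence about $K_1$ shows why: the smaller subgroup is there only to guarantee $\langle u,v\rangle\neq K$, and rank two is needed only for $K$. With that trick the proof is a few lines.

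You are right, incidentally, that commuting pairs need not be discarded, since they give non-injective and hence non-prime endomorphisms. The paper excludes them unnecessarily.

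Your replacement route shows that bases of $K$ are negligible in $(K\cap B_n)^2$. It works, and it gives the slightly stronger statement that all but $O(n^2)\,|K\cap B_n|$ pairs in $(K\cap B_n)^2$ are non-prime. The description of complements of a primitive element as $\langle u\rangle v_0^{\pm1}\langle u\rangle$ is standard. Abelianization alone does not give it; you need Nielsen's theorem or a Stallings folding argument.

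The step that actually needs care is the one you passed over with "Hence". For the bound $O(n^2)\,|K\cap B_n|$ you need $|\langle u\rangle g\langle u\rangle\cap B_n|=O(n^2)$ with a constant independent of $u\in K\cap B_n$ and of $g$. This is true, and here is how to prove it in the Cayley tree:
\begin{itemize}
\item Let $A_u$ be the axis of $u$, with translation length $|u|_{\mathrm{cyc}}\geq 1$. The axes $A_u$ and $gA_u$ are distinct and overlap in a segment of length less than $2|u|_{\mathrm{cyc}}$. Otherwise $u$ would commute with $gug^{-1}$, which forces $g$ to commute with $u$.
\item Suppose $|u^mgu^k|\leq n$. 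Then the projection of $u^{-m}\cdot 1$ to $A_u$ lies within distance $n+d(1,A_u)\leq 3n/2$ of the projection of $gA_u$ to $A_u$, which is a segment of length at most $2n$ or a point.
\item These projections, namely $u^{-m}$ applied to the projection of $1$, are $|u|_{\mathrm{cyc}}\geq 1$ apart for distinct $m$. This leaves $O(n)$ choices of $m$, and symmetrically $O(n)$ choices of $k$.
\end{itemize}
With this added your proof is complete, but the paper's finite-index trick is considerably shorter.
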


\begin{proof}
Choose a proper finite-index subgroup $L<K$.  Since $K$ has rank two, such an
$L$ exists.  Because $K$ is a finite union of right cosets of $L$, and
multiplication by a fixed coset representative changes word length by only a
bounded amount, the sets $L\cap B_n$ and $K\cap B_n$ have the same exponential
growth rate:
\[
        \limsup_{n\to\infty}|L\cap B_n|^{1/n}=\rho(K).
\]

If $u,v\in L$ do not commute, then $\varphi_{u,v}$ is injective by
Lemma~\ref{L.injective.noncommuting}, and
$
        \langle u,v\rangle\leq L<K<F_2.
$
Hence $\varphi_{u,v}$ is not prime by Lemma~\ref{L.primecriterion}.

It remains to note that commuting pairs in $(L\cap B_n)^2$ are negligible on
the exponential scale.  For each nontrivial $u\in F_2$, the centralizer
$C_{F_2}(u)$ is cyclic, so
$
        |C_{F_2}(u)\cap B_n|=O(n).
$
Hence the number of commuting pairs in $(L\cap B_n)^2$ is at most
$O(n)|L\cap B_n|+O(|L\cap B_n|)$, which has exponential growth rate $\rho(K)$,
whereas $(L\cap B_n)^2$ has exponential growth rate $\rho(K)^2$.  Thus the
noncommuting pairs in $(L\cap B_n)^2$ have exponential growth rate $\rho(K)^2$.

Consequently
$
        |\mathcal N_n|\geq \exp\bigl((2\log \rho(K)+o(1))n\bigr).
$
Since
$
        |B_n\times B_n|=\exp\bigl((2\log 3+o(1))n\bigr),
$
we obtain
$
        \delta_{\mathrm{np}}\geq \frac{\log \rho(K)}{\log 3}.
$
\end{proof}

\begin{proposition}\label{P.a2b.lower}
Let
$
        K=\langle a^2,b\rangle<F(a,b).
$
Then
$
        \rho(K)=\rho_0,
$
where $\rho_0$ is the largest real root of
$
        \rho^3-\rho^2-\rho-3=0.
$
Explicitly,
\[
        \rho_0=
        \frac{1+\sqrt[3]{46+6\sqrt{57}}+\sqrt[3]{46-6\sqrt{57}}}{3}
        =2.1303954347\ldots .
\]
Consequently,
$
        \delta_{\mathrm{np}}
        \geq \log_3\rho_0=0.6884208564\ldots .
$
\end{proposition}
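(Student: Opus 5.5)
The plan is to compute the growth rate $\rho(K)$ of $K=\langle a^2,b\rangle$ by a generating function coming from its Stallings graph. Then Proposition~\ref{P.fixedK.lower} gives the bound on $\delta_{\mathrm{np}}$ directly.

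The pointed Stallings graph $\widehat\Gamma(K)$ has no stem. Its core $X$ has two vertices $p$ (the basepoint) and $q$, an $a$-edge from $p$ to $q$, an $a$-edge from $q$ to $p$, and a $b$-loop at $p$. After suppressing degree-two vertices, this is a figure-eight with loop lengths $1$ (the $b$-loop) and $2$ (the $a$-cycle). This is exactly the extremal case in Lemma~\ref{L.sharp-cogrowth}. Since $h(K)=0$, the elements of $K\cap S_n$ are in bijection with non-backtracking closed paths of length $n$ at $p$.

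Every such path starts and ends at the branch vertex $p$. So it is a cyclically unconstrained, non-backtracking itinerary of oriented topological loops. I will count these with the $2\times 2$ reduced transfer matrix $M_{1,2}(z)$ from the proof of Lemma~\ref{L.sharp-cogrowth}. A word is a sequence of nonzero powers, alternating between the two loops: $b^{k}$ has length $|k|$, and $(a^2)^{j}$ has length $2|j|$. One loop contributes the series $2z/(1-z)$ and the other $2z^2/(1-z^2)$. The generating function $F_K(z)=\sum_n |K\cap S_n|z^n$ is then a rational function whose denominator is
\[
1-\frac{2z}{1-z}\cdot\frac{2z^2}{1-z^2},
\]
which is proportional to $(1-z)(1-z^2)-4z^3$. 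Its radius of convergence is therefore the smallest positive root $r_0$ of $1-r-r^2-3r^3=0$, which is $\rho_0^{-1}$.

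The growth statement follows from this. Check that the pole at $r_0$ is genuine: the numerator does not vanish there, and $1/(1-z)$ and $1/(1-z^2)$ have no poles at $r_0<1$. It is simple, and it is the unique singularity on $|z|=r_0$. This holds by Perron--Frobenius, since the figure-eight transfer matrix is irreducible with gcd of cycle lengths $1$ (the $b$-loop has length $1$). Hence $|K\cap S_n|\asymp \rho_0^n$ and $|K\cap B_n|\asymp\rho_0^n$, so $\rho(K)=\rho_0$.

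For the explicit value, apply Cardano's formula to the depressed form of $\rho^3-\rho^2-\rho-3$, substituting $\rho=t+1/3$, and check numerically that this root is the real root (there is only one). Finally, $K$ is proper, because it is not the rose. So Proposition~\ref{P.fixedK.lower} yields $\delta_{\mathrm{np}}\ge\log_3\rho_0$.

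The main obstacle is not the lower bound itself, since $\limsup$ only needs a lower bound on $\rho(K)$. That lower bound could even be obtained without the Perron--Frobenius argument, by exhibiting about $c\rho_0^n$ distinct words through the recursion $c_n=c_{n-1}+c_{n-2}+3c_{n-3}$. The delicate point is verifying that the combinatorial encoding is bijective, i.e.\ that reduced words in $K$ correspond exactly to alternating nonzero powers of $b$ and $a^2$. This holds because $K$ is freely generated by $a^2$ and $b$ and these words are already reduced.
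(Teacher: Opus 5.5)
Your proposal is correct and follows essentially the same route as the paper. Both arguments identify the core of $K$ as the figure-eight with loop lengths $1$ and $2$, obtain the critical equation $(1-r)(1-r^2)-4r^3=0$ from the weighted non-backtracking count, pass to $\rho^3-\rho^2-\rho-3=0$ with $\rho=r^{-1}$ and solve by Cardano, and conclude with Proposition~\ref{P.fixedK.lower}. Your explicit generating function $F_K(z)=(2z+2z^2+4z^3)/(1-z-z^2-3z^3)$, together with the check that its numerator is positive at $r_0$, fills in a step the paper leaves implicit: that the critical value of the transfer matrix really is the reciprocal of the growth rate $\rho(K)$.
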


\begin{proof}
The Stallings core of $K$ is a figure-eight with one loop of length $2$,
labelled by $a^2$, and one loop of length $1$, labelled by $b$.  Let
$r=\rho^{-1}$.  Recall, by symmetry, the weighted non-backtracking transfer matrix
reduces to
\[
        \begin{pmatrix}
                r^2 & 2r \\
                2r^2 & r
        \end{pmatrix}.
\]
The critical value is determined by the condition that this matrix have
spectral radius $1$, equivalently
$
        (1-r^2)(1-r)-4r^3=0.
$
Thus
$
        1-r-r^2-3r^3=0.
$
Putting $\rho=r^{-1}$, this becomes
$
        \rho^3-\rho^2-\rho-3=0.
$
As before, Cardano's formula gives the displayed expression for $\rho_0$.  The lower bound
now follows from Proposition~\ref{P.fixedK.lower}.
\end{proof}

Thus Proposition~\ref{P.fixedK.lower} cannot yield a lower bound larger than
$\log_3\rho_0$.  The next proposition shows that this fixed-overgroup mechanism
is not the dominant one.

\begin{proposition}\label{P.variable.lower}
One has
$
        \delta_{\mathrm{np}}\geq \frac34.
$
\end{proposition}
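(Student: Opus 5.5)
The plan is to make the hint after Theorem~B precise: produce non-prime endomorphisms as composites $\varphi=\alpha\circ\varphi_{a^2,b}$ with $\alpha=\varphi_{x,y}$ varying. Then $\varphi=\varphi_{x^2,y}$, so every pair $(u,v)=(x^2,y)$ with $\langle x,y\rangle$ a proper rank-two subgroup of $F_2$ should be non-prime. Such pairs have $|x|\le n/2$ and $|y|\le n$, giving about $3^{n/2}\cdot 3^n=3^{3n/2}$ of them. Against $|B_n\times B_n|\approx 3^{2n}$ this yields the exponent $\tfrac{3/2}{2}=\tfrac34$.

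First, the injection and the size of the family. Put $m=\lfloor n/2\rfloor$ and consider pairs $(x,y)\in B_m\times B_n$. Then $|x^2|\le 2|x|\le n$, so $(x^2,y)\in B_n\times B_n$. Square roots are unique in a free group, since centralizers are cyclic and $F_2$ is torsion-free. Hence $(x,y)\mapsto(x^2,y)$ is injective on this set. Its domain has size $|B_m||B_n|=3^{(3/2+o(1))n}$.

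Second, the non-primality criterion. Suppose $x,y$ do not commute and $\langle x,y\rangle\neq F_2$. By Lemma~\ref{L.injective.noncommuting}, $\alpha=\varphi_{x,y}$ is injective, with image $K=\langle x,y\rangle$ of rank two. The subgroup $\langle a^2,b\rangle$ is proper in $F_2$ (its Stallings graph is not the rose), and $\alpha$ is injective, so $H=\alpha(\langle a^2,b\rangle)=\langle x^2,y\rangle$ is a proper subgroup of $K$. The elements $x^2$ and $y$ do not commute either: otherwise $x^2$ and $y$ would lie in a common cyclic subgroup, which forces $x$ and $y$ to commute. So $\varphi_{x^2,y}$ is injective and $H<K<F_2$ with $\rk K=2$. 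Lemma~\ref{L.primecriterion} then shows that $\varphi_{x^2,y}$ is not prime.

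Third, the exceptional pairs are exponentially negligible, and I expect this to be the only real obstacle. Commuting pairs are few: for fixed $x\neq1$ the centralizer meets $B_n$ in $O(n)$ elements, giving $O(n)|B_m|$ pairs, which is exponent $n/2$. The main worry is pairs with $\langle x,y\rangle=F_2$, which makes $x$ primitive. If $\langle x,y\rangle=F_2$ for a rank-two generating pair, then $\{x,y\}$ is a basis and $x$ is primitive. The plan is to quote the known fact that primitive elements of $F_2$ in $B_m$ number at most $3^{(1/2+o(1))m}$. Alternatively one can use a cruder bound: primitive words avoid a suitable fixed subword, for instance they cannot contain both $a^2$ and $a^{-2}$ in the forbidden pattern sense for cyclically reduced primitives in $F_2$. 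Any growth rate strictly below $3$ suffices. With such a bound, the bad pairs number at most $3^{(\theta+o(1))n/2}\cdot 3^{n}$ with $\theta<1$, which is exponentially smaller than $3^{3n/2}$.

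Subtracting both exceptional sets leaves $|\mathcal N_n|\ge 3^{(3/2+o(1))n}$, and therefore $\delta_{\mathrm{np}}\ge \frac{(3/2)\log 3}{2\log 3}=\frac34$.
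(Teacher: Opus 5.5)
Your proof is correct. It uses the same factorization $\varphi_{x^2,y}=\varphi_{x,y}\circ\varphi_{a^2,b}$, the same injectivity of squaring, and the same count $3^{m+n+o(n)}$ with $m=\lfloor n/2\rfloor$ as the paper. The one genuine difference is how you guarantee that the intermediate group $\langle x,y\rangle$ is proper.

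\textbf{Your route.} You take all of $B_m\times B_n$ and discard the pairs with $\langle x,y\rangle=F_2$. By Hopficity such a pair is a basis, so $x$ is primitive, and you need an external input to count these. One option is that primitive elements of $F_2$ have exponential growth rate $\sqrt3<3$ (Burillo--Ventura). A cruder option is that a cyclically reduced primitive word in $F(a,b)$ never contains both $a$ and $a^{-1}$, nor both $b$ and $b^{-1}$. Every primitive element is $rcr^{-1}$ with such a $c$, and this already gives growth rate at most $2$.

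\textbf{The paper's route.} The paper avoids this entirely by restricting $x$ and $y$ to the finite-index subgroup $\Gamma=\ker\bigl(F(a,b)\to(\mathbb Z/2\mathbb Z)^2\bigr)$. Then $\langle x,y\rangle\leq\Gamma<F_2$ is automatically proper. Since $|\Gamma\cap B_r|=3^{r+o(r)}$, nothing is lost on the exponential scale.

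\textbf{Comparison.} With the paper's choice, the step you flagged as the main obstacle disappears and the argument is self-contained. Your version keeps all but exponentially few pairs of $B_m\times B_n$, at the price of a nontrivial citation.

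\textbf{Two small points.}
\begin{enumerate}
\item Your ``forbidden subword'' alternative is misstated as written. A non-cyclically-reduced primitive $rcr^{-1}$ can contain any subword inside $r$; the repair is the conjugation decomposition described above.
\item Your commuting-pair count omits $x=1$. This contributes $|B_n|=3^{n+O(1)}$ pairs, which is still negligible.
\end{enumerate}
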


\begin{proof}
Let
$
        \Gamma=\ker\bigl(F(a,b)\to (\mathbb Z/2\mathbb Z)^2\bigr)
$
be the kernel of the homomorphism sending $a$ and $b$ to the two standard
generators.  Then $\Gamma$ is a proper finite-index subgroup of $F(a,b)$, and
therefore
$
        |\Gamma\cap B_r|=3^{r+o(r)}.
$

Put $m=\lfloor n/2\rfloor$.  For $x\in \Gamma\cap B_m$ and
$y\in \Gamma\cap B_n$ with $[x,y]\neq 1$, consider the endomorphism
\[
        \psi_{x,y}(a)=x^2,
        \qquad
        \psi_{x,y}(b)=y.
\]
Since $|x^2|\leq 2|x|\leq n$ and $|y|\leq n$, this gives a pair in
$B_n\times B_n$.

Let
\[
        \varphi_{a^2,b}(a)=a^2,
        \qquad
        \varphi_{a^2,b}(b)=b,
\]
and let
\[
        \alpha(a)=x,
        \qquad
        \alpha(b)=y.
\]
Then $\psi_{x,y}=\alpha\circ\varphi_{a^2,b}$.  Since $a^2$ and $b$ do not commute,
Lemma~\ref{L.injective.noncommuting} shows that $\varphi_{a^2,b}$ is injective.  Its
image lies in the proper subgroup $\langle a^2,b\rangle<F(a,b)$, so $\varphi_{a^2,b}$ is
not an automorphism.  Since $x$ and $y$ do not commute, Lemma~\ref{L.injective.noncommuting}
also shows that $\alpha$ is injective.  Moreover
$\alpha(F(a,b))\subseteq \Gamma<F(a,b)$, so $\alpha$ is not an automorphism.
Thus $\psi_{x,y}$ is a non-prime injective endomorphism.

It remains to count.  The pairs with $x=1$ contribute only $3^{n+o(n)}$.
For each nontrivial $x\in \Gamma\cap B_m$, the centralizer $C_{F_2}(x)$ is
cyclic, so
$
        |C_{F_2}(x)\cap B_n|=O(n).
$
Hence the commuting pairs in $(\Gamma\cap B_m)\times(\Gamma\cap B_n)$ are
exponentially negligible, and the number of noncommuting such pairs is
$
        3^{m+n+o(n)}=3^{3n/2+o(n)}.
$
The squaring map is injective in a free group, since free groups have unique
roots.  Therefore these choices give $3^{3n/2+o(n)}$ distinct non-prime pairs
in $B_n\times B_n$.

Since
$
        |B_n\times B_n|=3^{2n+o(n)},
$
we obtain
$
        \delta_{\mathrm{np}}\geq \frac{3/2}{2}=\frac34.
$
\end{proof}

The preceding estimates leave a substantial gap:
\[
        \frac34
        \leq \delta_{\mathrm{np}}
        \leq \frac{27}{28}=0.9642857142\ldots .
\]
It would be interesting to determine the exact value of $\delta_{\mathrm{np}}$, as suggested by the conjecture in the introduction.

\medskip

The whole situation differs substantially from classical prime-number-type sieving. Indeed, while the conditions
$u,v\in K$, as $K$ ranges over proper rank-two subgroups, have large overlaps
and are organized by non-distributive overgroup lattices, the
inclusion-exclusion is unlikely to be useful with respect to logarithmic density without
additional structural input.

\subsection{The simple random-walk model}\label{S.random-walk}

Let $\mu$ be the uniform probability measure on $\{a^{\pm1},b^{\pm1}\}$.  For
$n\geq 0$, let $w_n=s_1s_2\cdots s_n$
be the simple random walk on $F_2$, followed by free reduction.  Let
$w_n^{(1)},w_n^{(2)}$ be two independent copies and put
$\mathcal H_n=\langle w_n^{(1)},w_n^{(2)}\rangle$.

\begin{lemma}\label{L.radial.exp}
Let $w_n$ be simple random walk on $F_2$, freely reduced after $n$ steps, and
put $R_n=|w_n|$.  Then, for every $\lambda\geq 0$,
\[
        \E e^{-\lambda R_n}
        \leq
        \left(\frac34 e^{-\lambda}+\frac14 e^{\lambda}\right)^n.
\]
Consequently, if $R_n^{(1)},R_n^{(2)}$ are independent copies of $R_n$, then
\[
        \E e^{-\lambda \min\{R_n^{(1)},R_n^{(2)}\}}
        \leq
        2\left(\frac34 e^{-\lambda}+\frac14 e^{\lambda}\right)^n.
\]
\end{lemma}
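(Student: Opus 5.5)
We will control $R_n=|w_n|$ through a supermartingale. The starting observation is the standard radial behaviour of simple random walk on the $4$-regular tree. If $R_k\geq 1$, the next letter $s_{k+1}$ cancels the last letter of $w_k$ with probability exactly $1/4$, so $R_{k+1}=R_k-1$; with probability $3/4$ it extends the word, so $R_{k+1}=R_k+1$. If $R_k=0$, then $R_{k+1}=1$ deterministically. Let $\mathcal F_k$ be the $\sigma$-algebra generated by $s_1,\dots,s_k$, and put $\theta(\lambda)=\frac34 e^{-\lambda}+\frac14 e^{\lambda}$. For $R_k\geq1$,
\[
        \E\bigl[e^{-\lambda R_{k+1}}\mid\mathcal F_k\bigr]
        =e^{-\lambda R_k}\Bigl(\tfrac34 e^{-\lambda}+\tfrac14 e^{\lambda}\Bigr)
        =\theta(\lambda)\,e^{-\lambda R_k}.
\]
For $R_k=0$ the conditional expectation equals $e^{-\lambda}$, and we need this to be at most $\theta(\lambda)e^{0}=\theta(\lambda)$.

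This boundary inequality is the only genuinely delicate point. It reads $e^{-\lambda}\leq\frac34e^{-\lambda}+\frac14e^{\lambda}$, i.e.\ $\frac14e^{-\lambda}\leq\frac14e^{\lambda}$, which holds exactly because $\lambda\geq0$. This is where the hypothesis on $\lambda$ enters.

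With the one-step bound $\E[e^{-\lambda R_{k+1}}\mid\mathcal F_k]\leq\theta(\lambda)e^{-\lambda R_k}$ holding in all cases, we take expectations and induct on $k$. Since $R_0=0$, this gives $\E e^{-\lambda R_n}\leq\theta(\lambda)^n\cdot e^{0}=\theta(\lambda)^n$, which is the first claim.

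For the second claim we use the pointwise bound
\[
        e^{-\lambda\min\{x,y\}}=\max\{e^{-\lambda x},e^{-\lambda y}\}\leq e^{-\lambda x}+e^{-\lambda y},
\]
valid for $\lambda\geq0$. Taking expectations and applying the first part to each copy bounds $\E e^{-\lambda\min\{R_n^{(1)},R_n^{(2)}\}}$ by $2\theta(\lambda)^n$. Independence of the two copies is not even needed here.
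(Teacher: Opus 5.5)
Your proof is correct, but it takes a different route from the paper. The paper couples $R_n$ with the biased walk $T_n=X_1+\cdots+X_n$ (steps $+1$ with probability $3/4$, $-1$ with probability $1/4$) so that $R_n\geq T_n$ pathwise. It then uses that $x\mapsto e^{-\lambda x}$ is nonincreasing for $\lambda\geq 0$, together with the independence of the $X_i$, to get $\E e^{-\lambda R_n}\leq \E e^{-\lambda T_n}=\theta(\lambda)^n$.

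You instead show directly that $\theta(\lambda)^{-k}e^{-\lambda R_k}$ is a supermartingale. Away from the identity the one-step computation is exact, and at the identity the reflection is absorbed by the inequality $e^{-\lambda}\leq\theta(\lambda)$.

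The coupling gives a pathwise stochastic domination. That controls every nonincreasing functional of $R_n$ at once (tail probabilities, other moments), not just the exponential moment. However, the paper only asserts that such a coupling exists and does not construct it.

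Your argument needs nothing beyond the one-step conditional law of $R_{k+1}$ given $\mathcal F_k$ and is fully explicit. It also shows exactly where $\lambda\geq 0$ is used: at the boundary step at the identity, whereas in the paper it enters through monotonicity.

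The second claim is handled the same way in both, and you are right that independence of the two copies is not needed.
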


\begin{proof}
Let $T_n=X_1+\cdots+X_n$, where the $X_i$ are independent and satisfy
\[
        \Prob(X_i=1)=\frac34,
        \qquad
        \Prob(X_i=-1)=\frac14.
\]
The radial process $R_n=|w_n|$ moves up with probability $3/4$ and down with
probability $1/4$ away from the identity, while at the identity it is forced to
move up.  Hence one may couple $R_n$ and $T_n$ so that
$
        R_n\geq T_n
$
for all $n$.  Since $\lambda\geq 0$, this gives
\[
        \E e^{-\lambda R_n}
        \leq
        \E e^{-\lambda T_n}
        =
        \left(\frac34 e^{-\lambda}+\frac14 e^{\lambda}\right)^n.
\]
For the second assertion, use
$
        e^{-\lambda \min\{R_n^{(1)},R_n^{(2)}\}}
        \leq
        e^{-\lambda R_n^{(1)}}+e^{-\lambda R_n^{(2)}}.
$
\end{proof}

\begin{lemma}\label{L.radial.uniform}
Conditioned on $|w_n|=r$, 
the reduced word $w_n$ is uniformly distributed on the sphere $S_r$.
\end{lemma}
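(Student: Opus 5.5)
The plan is a symmetry argument: the law of the reduced word $w_n$ is invariant under the automorphisms of the Cayley tree that fix the identity, and this group acts transitively on each sphere $S_r$.

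First, set up the symmetry group. Let $G$ be the group of label-respecting-up-to-relabelling automorphisms of the $4$-regular Cayley tree $T$ of $F(a,b)$ that fix the vertex $1$. It is cleaner to pass to the path space. A step sequence $(s_1,\dots,s_n)\in\{a^{\pm1},b^{\pm1}\}^n$ determines a nearest-neighbour path $1=x_0,x_1,\dots,x_n=w_n$ in $T$, and this correspondence is a bijection onto the set of length-$n$ paths in $T$ starting at $1$. Every such path has the same probability $4^{-n}$. So $\Prob(w_n=g)=4^{-n}\cdot\#\{\text{length-}n\text{ paths in }T\text{ from }1\text{ to }g\}$, and it suffices to show that this path count $N_n(g)$ depends only on $|g|$.

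Second, prove that invariance. Any graph automorphism $\sigma$ of $T$ with $\sigma(1)=1$ maps paths from $1$ to $g$ bijectively onto paths from $1$ to $\sigma(g)$, so $N_n(g)=N_n(\sigma(g))$. (These automorphisms need not respect the labels; only the graph structure matters.) It therefore remains to check that the graph-automorphism stabiliser of the root acts transitively on each sphere $S_r$. This is standard for a regular tree, and can be built inductively. Given $g,g'\in S_r$ with geodesics $1=g_0,\dots,g_r=g$ and $1=g'_0,\dots,g'_r=g'$, define $\sigma$ level by level. At the root, choose any bijection of the four neighbours sending $g_1\mapsto g'_1$. At each vertex already mapped, choose any bijection of its three children onto the three children of its image, sending $g_{i+1}\mapsto g'_{i+1}$ whenever the vertex is $g_i$. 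This defines a tree automorphism fixing $1$ with $\sigma(g)=g'$.

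Third, conclude. Since $N_n$ is constant on $S_r$, $\Prob(w_n=g\mid |w_n|=r)=N_n(g)/\sum_{g'\in S_r}N_n(g')=1/|S_r|$ whenever $\Prob(|w_n|=r)>0$.

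The only point requiring care is distinguishing the random walk from its labels. The walk is invariant under graph automorphisms of $T$ that need not be induced by group automorphisms of $F_2$, because the step distribution is uniform over all neighbours. This is precisely what yields transitivity on spheres, which label-preserving automorphisms alone would not provide. Once the path-counting reformulation is in place there is no real obstacle.
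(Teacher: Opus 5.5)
Your proof is correct and uses the same symmetry argument as the paper: the walk's law is invariant under automorphisms of the Cayley tree fixing $1$, and these act transitively on each sphere $S_r$. Your path-counting formulation and your remark that these must be unlabelled graph automorphisms actually sharpen the paper's wording. The paper speaks of automorphisms of the ``labelled Cayley tree'', but the only label-preserving one fixing $1$ is the identity.
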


\begin{proof}
The simple random walk measure on $F_2$ is invariant under every automorphism of
the labelled Cayley tree fixing the identity.  This automorphism group acts transitively on each sphere $S_r$.
Therefore the conditional distribution on $S_r$ is uniform.
\end{proof}

\begin{theorem}\label{T.random-walk}
There are absolute constants $A,B>0$ such that
\[
        \Prob\bigl(\varphi_{w_n^{(1)},w_n^{(2)}}\text{ is not prime}\bigr)
        \leq A n^B 3^{-n/30}.
\]
\end{theorem}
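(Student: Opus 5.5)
We condition on the radial lengths $R^{(1)}=|w_n^{(1)}|$ and $R^{(2)}=|w_n^{(2)}|$. By Lemma~\ref{L.radial.uniform}, conditioned on $R^{(1)}=r$ and $R^{(2)}=s$ with $r,s\geq 1$, the two words are independent and uniformly distributed on $S_r$ and $S_s$. Corollary~\ref{C.mixed-spheres} then gives
\[
        \Prob\bigl(\varphi_{w_n^{(1)},w_n^{(2)}}\text{ is not prime}\mid R^{(1)}=r,\ R^{(2)}=s\bigr)\leq A(r+s)^B3^{-\min\{r,s\}/14}.
\]
If $r=0$ or $s=0$ we use the trivial bound $1$, which is the case $N=0$ of the same estimate. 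Since $r+s\leq 2n$, averaging over the joint law yields
\[
        \Prob\bigl(\varphi_{w_n^{(1)},w_n^{(2)}}\text{ is not prime}\bigr)\leq A(2n)^B\,\E\, 3^{-\min\{R^{(1)},R^{(2)}\}/14}.
\]

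It then remains to bound the exponential moment. Apply Lemma~\ref{L.radial.exp} with $\lambda=\frac{\log 3}{14}$, so that $e^{\lambda}=3^{1/14}$. This gives
\[
        \E\,3^{-\min\{R^{(1)},R^{(2)}\}/14}\leq 2\Bigl(\tfrac34\,3^{-1/14}+\tfrac14\,3^{1/14}\Bigr)^n .
\]
We need $q:=\tfrac34 3^{-1/14}+\tfrac14 3^{1/14}\leq 3^{-1/30}$. Numerically $3^{1/14}\approx 1.0816$, so $q\approx 0.6934+0.2704=0.9638$, while $3^{-1/30}\approx 0.9640$. The inequality therefore holds, and it should be certified by a short rigorous computation, for instance by bounding $3^{\pm1/14}$ by truncated exponential series.

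If this turns out to be too tight, there is a cleaner route: optimize the split rather than plugging in $\lambda$ directly. Write $\E 3^{-\min/14}\leq \Prob(\min\leq cn)+3^{-cn/14}$. Then bound $\Prob(R_n\leq cn)\leq \Prob(T_n\leq cn)$ by a Chernoff estimate via the same coupling, and choose $c$ to balance the two terms. With $c$ near $7/15$, the Chernoff term for $T_n$ (mean $n/2$) needs rate at least $\frac{\log 3}{30}\approx 0.0366$. The Cramér rate at $7/15$ versus mean $1/2$ is of order $(1/30)^2/(2\cdot 3/4)\cdot\ldots$, which may be too small, so the direct $\lambda$ substitution is preferable. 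The main obstacle is precisely this numerical verification that the moment generating bound beats $3^{-1/30}$. If the margin is too thin, one can slightly reduce $\lambda$, since the exponent $1/30$ leaves room only if $q(\lambda)\leq 3^{-1/30}$ for some $\lambda\leq \frac{\log 3}{14}$. Because $q$ is decreasing in $\lambda$ on this range (its minimum occurs at $e^{2\lambda}=3$), taking $\lambda=\frac{\log 3}{14}$ is optimal.

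Combining the two displays gives the bound $A' n^{B}3^{-n/30}$ with $A'=2^{B+1}A$, which is the claim.
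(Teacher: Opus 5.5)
Your proof is correct and follows essentially the paper's argument: condition on the radial lengths, apply Corollary~\ref{C.mixed-spheres}, and bound $\E\,3^{-\min\{R^{(1)},R^{(2)}\}/14}$ using Lemma~\ref{L.radial.exp} at $\lambda=\frac{\log 3}{14}$ together with the numerical inequality $\tfrac34\cdot 3^{-1/14}+\tfrac14\cdot 3^{1/14}<3^{-1/30}$. The paper bounds $\Prob(R_i=0)$ separately with the same moment estimate, while you absorb the zero-length case into the moment directly; this works once $A(2n)^B\geq 1$, e.g.\ taking $A\geq 1$, and the Chernoff-split digression is unnecessary.
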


\begin{proof}
Put
$
        R_i=|w_n^{(i)}|,$ for $i=1,2$.
Conditioned on $R_1=r$ and $R_2=s$, with $r,s\geq 1$, the two reduced words
$w_n^{(1)}$ and $w_n^{(2)}$ are independent and uniformly distributed on
$S_r$ and $S_s$, respectively, by Lemma~\ref{L.radial.uniform}.

By Corollary~\ref{C.mixed-spheres}, there are absolute constants $A_0,B>0$
such that
\[
\begin{aligned}
        \Prob\bigl(&\varphi_{w_n^{(1)},w_n^{(2)}}\text{ is not prime}
        \,\big|\, R_1=r,\ R_2=s\bigr)
        \leq A_0 (r+s)^B 3^{-\min\{r,s\}/14}.
\end{aligned}
\]
Since $r+s\leq 2n$, averaging gives
\[
\begin{aligned}
&\Prob\bigl(\varphi_{w_n^{(1)},w_n^{(2)}}\text{ is not prime},\ R_1,R_2\geq 1\bigr)
\leq A_1 n^B \E\,3^{-\min\{R_1,R_2\}/14}.
\end{aligned}
\]
Apply Lemma~\ref{L.radial.exp} with $\lambda=(\log 3)/14$.  Then
\[
        \E\left(3^{-\min\{R_1,R_2\}/14}\right)
        \leq
        2\left(\frac34\,3^{-1/14}+\frac14\,3^{1/14}\right)^n.
\]
The numerical inequality
$
        \frac34\,3^{-1/14}+\frac14\,3^{1/14}
        <
        3^{-1/30}
$
therefore implies
\[
        \Prob\bigl(\varphi_{w_n^{(1)},w_n^{(2)}}\text{ is not prime},\ R_1,R_2\geq 1\bigr)
        \leq
        A_2 n^B 3^{-n/30}.
\]

It remains to consider the case $R_1=0$ or $R_2=0$.  Since
$
        \mathbf 1_{\{R_i=0\}}\leq 3^{-R_i/14},
$
the same exponential moment estimate gives
\[
        \Prob(R_i=0)
        \leq
        \left(\frac34\,3^{-1/14}+\frac14\,3^{1/14}\right)^n
        \leq
        3^{-n/30}.
\]
Thus
$
        \Prob(R_1=0\text{ or }R_2=0)
        \leq
        2 \cdot 3^{-n/30}.
$
Absorbing this term into the polynomial prefactor proves
$
        \Prob\bigl(\varphi_{w_n^{(1)},w_n^{(2)}}\text{ is not prime}\bigr)
        \leq A n^B 3^{-n/30}.
$
This proves the theorem.
\end{proof}

\section{Questions and remarks}\label{S.questions}

Despite the examples of factorization lattices appearing in Section \ref{sec:examples}, we believe that \emph{generically}, the factorization lattice of a non-prime injective endomorphism of $F_2$ is a chain. There are various potential ways to make this precise: Let $k \in \mathbb N$ be fixed and consider the set $(B_n \times B_n)^k$ with the uniform probability measure. Any element $((u_1,v_1),\ldots,(u_k,v_k)) \in (B_n \times B_n)^k$ defines a $k$-tuple of endomorphisms $(\varphi_{u_i,v_i})_{i=1}^k$, and we can consider the event $\Sigma(n,k)$ that the factorization lattice of the endomorphism $\varphi_\xi = \varphi_{u_1,v_1} \circ \cdots \circ \varphi_{u_k,v_k}$ consists just of the obvious chain of length $k+1$.

\begin{question}
    \label{Q.generic-chain} Let $k \in \mathbb N$ be fixed.
    Is it true that the probability of $\Sigma(n,k)$ tends to $1$ as $n \to \infty$?
\end{question}

Apart from the generic behaviour it remains an interesting problem to understand the possible factorization lattices of non-prime injective endomorphisms of $F_2$, see for example Question \ref{Q:lattice}. Moreover, it seems interesting to determine the maximal size of a factorization lattice in terms of the volume of the Stallings core. The bounds from Lemma \ref{L.count-cores} seem to be far from optimal in that regard and examples suggest the following question:

\begin{question}
Is the size of $P_2(K)$ bounded by $O(\vol(K) \log \log \vol(K))$?
\end{question}

This bound is achieved by the example of $K=\langle a^n,b a^n b^{-1}\rangle$ for some large $n$, which has volume $2n+1$ and $P_2(K)$ of size $\geq \sigma(n)$, where $\sigma(n)$ is the sum of divisors of $n$. Indeed, for $n:={\rm lcm}(1,2,\ldots,m)$ and $m$ large, we have $\sigma(n) \approx n \log\log(n)$.

\section*{Acknowledgements}

I thank Vadim Alekseev for interesting discussions related to this problem. I am also indebted to Sean Eberhard, who generously shared his insight into the use of Stallings graphs in this context.

\medskip

ChatGPT 5.5 was used to assist in drafting parts of this manuscript. All content was reviewed and substantially revised by the author, who is responsible for the final text.

\end{document}